\declaretheorem[parent=section]{theorem}
\declaretheorem[sibling=theorem]{proposition}
\declaretheorem[sibling=theorem]{lemma}
\declaretheorem[sibling=theorem]{corollary}
\declaretheorem[sibling=theorem]{conjecture}
\declaretheorem[sibling=theorem,style=remark]{remark}
\declaretheorem[sibling=theorem,style=definition]{definition}
\declaretheorem[numbered=no,name=Theorem]{theorem*}
\declaretheorem[numbered=no,style=definition,name=Definition]{definition*}
\declaretheorem[numbered=no,name=Proposition]{proposition*}
\begin{document}
\bibliographystyle{amsalpha}

\newcommand{\mi}{\ensuremath{\mathrm{i}}}
\newcommand{\me}{\ensuremath{\mathrm{e}}}
\newcommand{\mPS}{\ensuremath{(\Omega,\mathcal{F},P)}}
\newcommand{\Id}{\on{Id}}
\newcommand{\probSpace}{\ensuremath{(\Omega,\mathcal{F},P)}}
\newcommand{\fprobSpace}{\ensuremath{(\Omega,\mathcal{F}_t,P)}}
\newcommand{\prob}{\ensuremath{\operatorname{P}}}
\newcommand{\fps}{\ensuremath{\left( \Omega,\mathcal{F},(\mathcal{F}_t)_{t
        \geq 0}, \prob \right)}}
\newcommand{\stset}{\ensuremath{\mathcal{T}}}
\newcommand{\ind}{1{\hskip -2.5 pt}\hbox{\textnormal{I}}}

\newcommand{\on}[1]{\operatorname{#1}}
\newcommand{\abs}[1]{\left\vert #1 \right\rvert}
\newcommand{\norm}[1]{\left\lVert #1 \right\rVert}
\newcommand{\Ex}[1]{\operatorname{E} \left[ #1 \right]}
\newcommand{\qvar}[2]{\langle #1, #2 \rangle}
\newcommand{\Set}[1]{\left\{ #1 \right\}}
\newcommand{\diff}[1]{\operatorname{d}\ifthenelse{\equal{#1}{}}{\,}{#1}}

\newcommand{\E}{\operatorname{E}}
\newcommand{\C}{\mathbb{C}}
\newcommand{\R}{\mathbb{R}}
\newcommand{\Q}{\mathbb{Q}}
\newcommand{\N}{\mathbb{N}}
\newcommand{\Z}{\mathbb{Z}}
\newcommand{\LL}{\on{L}}

\newcommand{\Cum}{\mathcal{K}}

\newtheorem*{ld_thm:mix}{Theorem \ref{ld_thm:mix}}

\author{Simon Campese}
\email{simon@campese.de}
\address{Universit\`a degli Studi di Roma Tor Vergata, Dipartimento di Matematica, Via della
  Ricerca Scientifica 1, 00133 Roma, Italy}
\title[Moments in space of the increments of Brownian local time]{A limit theorem
  for moments in space of the increments of Brownian local time}
\date{\today}
\thanks{Research partially supported by ERC grant 277742 Pascal}  
\begin{abstract}
  We proof a limit theorem for moments in space of the increments of Brownian
  local time. As special cases for the second and third moments, previous
  results by Chen et al. (Ann. Prob. 38, 2010, no. 1) and Rosen~(Stoch. Dyn. 11, 2011, no. 1),
  which were later reproven by Hu and Nualart~(Electron. Commun. Probab. 14,
  2009; Electron. Commun. Probab. 15, 2010) and
  Rosen~(S\'eminaire de Probabilit\'es XLIII, Springer, 2011) are
  included. Furthermore, a conjecture of Rosen for the fourth moment is
  settled. 
  In comparison to the previous methods of proof, we follow a fundamentally
  different approach by exclusively working in the space variable of the
  Brownian local time, which allows to give a unified argument for arbitrary
  orders. The main ingredients are Perkins' semimartingale
  decomposition, the Kailath-Segall identity and an asymptotic Ray-Knight
  Theorem by Pitman and Yor.
\end{abstract}
\subjclass[2000]{60F05,60G44,60H05}
\keywords{Kailath Segall identity, Brownian local time, Central Limit Theorem,
  asymptotic Ray-Knight Theorem}
\maketitle
\section{Introduction}

Let $(L_t^x)$ be the local time of Brownian motion. 
In~\cite{chen_clt_2010}, motivated by the form of a
Hamiltonian in a certain polymer model, Chen et al. proved that 
\begin{equation}
  \label{eq:9}
  \frac{1}{h^{3/2}} \left(
  \int_{-\infty}^{\infty} \left( L_t^{x+h} - L_t^x \right)^2 \diff{x} -4ht
\right)
\xrightarrow{d} \sqrt{\frac{64}{3} \int_{-\infty}^{\infty} (L_t^x)^2 \diff{x}}
  \, Z,
\end{equation}
where $Z$ is a standard Gaussian random variable which is independent of
$(L_t^x)_{x \in \R}$ and $\xrightarrow{d}$ denotes convergence in
distribution. This can be seen as a central limit theorem, as it was 
also shown in the aforementioned reference that 
\begin{equation*}
  \E \left[ h^{-3/2} \int_{-\infty}^{\infty} \left(
      L_t^{x+h}-L_t^x \right)^2\diff{x}  \right]
  =
  4th + \mathcal{O}(h^2).
\end{equation*}
Note that up to a constant, the limit on
the right hand side equals in law $\int_{-\infty}^{\infty} L_t^x
  \diff{W_x}$, where $W$ is a two-sided Brownian 
motion which is independent of $(L_t^x)_{x \in \R}$. As was pointed out
in~\cite{chen_clt_2010}, this integral, when interpreted as a process in $t$, is
known as ``Brownian motion in Brownian scenery'' and also appears as a limit in
several applications, for 
example when modelling self-interacting random walks
(see~\cite{kesten_limit_1979}) or charged polymers
(see~\cite{chen_limit_2008,chen_charged_2009}). 

The proof of~\eqref{eq:9} was realised by the method of moments, and two
different ones were subsequently given by Rosen in~\cite{rosen_stochastic_2011},
using stochastic calculus and Brownian self-intersection local times, and
Hu and Nualart in~\cite{hu_stochastic_2009}, using Malliavin calculus and Pitman
and Yor's asymptotic version of the Ray-Knight theorem
(see~\cite{pitman_asymptotic_1986}).
Later, in~\cite{rosen_clt_2011}, Rosen, again using the method of moments,
proved a central limit theorem for the third power, which reads
\begin{equation}
  \label{eq:32}
  \frac{1}{h^2} \int_{-\infty}^{\infty} \left( L_t^{x+h} - L_t^x \right)^3
  \diff{x} \xrightarrow{d} \sqrt{192 \int_{-\infty}^{\infty} \left( L_t^x
    \right)^3 \diff{x}} \, Z,
\end{equation}
with $Z$ as above. For this case as well, a Malliavin calculus proof was
provided by Hu and Nualart in~\cite{hu_central_2010}. Unfortunately, as
Rosen mentions in~\cite{rosen_clt_2011}, his proof via the method of moments
will not work for powers higher than three. It
yields, however, the following conjecture\footnote{The rightmost integral in the
  numerator of~\eqref{eq:47} was typeset as 
  $+ 48h^2 \int_{-\infty}^{\infty} (L_t^x)^2 
      -  (\Delta_x^h L_t^x) L_t^x \diff{x}$ in the original
      reference~\cite{rosen_clt_2011}. In the statement of
      Conjecture~\ref{conj:4}, the author corrected this in a way that
      seemed the most reasonable at the time of writing.     } 

\begin{conjecture}[Rosen,~\cite{rosen_clt_2011}]
  \label{conj:4}
  Writing $\Delta_x^h L_t^x = L_t^{x+h}-L_t^x$, it holds that
  \begin{multline}
    \label{eq:47}
    \frac{1}{h^{5/2}}
    \left(
      \int_{-\infty}^{\infty} \left( \Delta_x^hL_t^x \right)^4 \diff{x}
      - 24h \int_{-\infty}^{\infty} \left( \Delta_x^h L_t^x \right)^2 L_t^x
      \diff{x}
      \right. \\ \left.
        + 48h^2 \int_{-\infty}^{\infty} (L_t^x)^2  \diff{x}
        - \int_{-\infty}^{\infty}  (\Delta_x^h L_t^x) L_t^x \diff{x}
    \right)
    \xrightarrow{d} \sqrt{\frac{2^{9} 4!}{5}  \int_{-\infty}^{\infty}
      (L_t^x)^4 \diff{x}} \, Z, 
  \end{multline}
  where $Z \sim \mathcal{N}(0,1)$, independent of $(L_t^x)_{x \in \R}$.
\end{conjecture}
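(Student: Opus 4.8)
The plan is to prove Conjecture~\ref{conj:4} as a special case of a central limit theorem, valid at every order $n$, for $h^{-(n+1)/2}$ times a suitably counter‑termed $\int_\R(\Delta_x^hL_t^x)^n\,\diff{x}$, working throughout in the space variable of $x\mapsto L_t^x$ at a fixed time $t$. Three inputs are used: Perkins' theorem that, for fixed $t$, $x\mapsto L_t^x$ is a continuous semimartingale (separately on $[0,\infty)$ and on $(-\infty,0]$) with quadratic variation $\diff\langle L_t^\cdot\rangle_x=4L_t^x\,\diff{x}$ and with finite‑variation part of bounded density; the Kailath--Segall identity, which for a continuous semimartingale $X$ with $X_0=0$ expresses $X_t^n$ as the universal linear combination $\sum_{0\le k\le n/2}c_{n,k}\,\langle X\rangle_t^k\,\mathcal J_{n-2k}(t)$ of its iterated stochastic integrals; and Pitman and Yor's asymptotic Ray--Knight theorem, which describes the joint limit law of the rescaled spatial increments of $L_t^\cdot$ together with the profile $(L_t^x)_{x\in\R}$.

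Fixing $t$, I apply the Kailath--Segall identity to the semimartingale $s\mapsto L_t^{x+s}-L_t^x$ on $s\in[0,h]$, obtaining \emph{exactly}
\[
  (\Delta_x^hL_t^x)^n=\sum_{k=0}^{\lfloor n/2\rfloor}c_{n,k}\bigl(\langle L_t^\cdot\rangle_{x+h}-\langle L_t^\cdot\rangle_x\bigr)^k J_{n-2k}^{(x,h)},
\]
where $\langle L_t^\cdot\rangle_{x+h}-\langle L_t^\cdot\rangle_x=4\int_x^{x+h}L_t^y\,\diff{y}$ and $J_m^{(x,h)}$ is the $m$‑fold iterated It\^o integral of the increments of $L_t^\cdot$ over $\{x\le y_1\le\cdots\le y_m\le x+h\}$. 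For $n=4$, together with $J_2^{(x,h)}=\tfrac12\bigl((\Delta_x^hL_t^x)^2-(\langle L_t^\cdot\rangle_{x+h}-\langle L_t^\cdot\rangle_x)\bigr)$, this becomes
\[
  (\Delta_x^hL_t^x)^4=24\,J_4^{(x,h)}+24\Bigl(\int_x^{x+h}L_t^y\,\diff{y}\Bigr)(\Delta_x^hL_t^x)^2-3\bigl(\langle L_t^\cdot\rangle_{x+h}-\langle L_t^\cdot\rangle_x\bigr)^2.
\]
I then integrate $\diff{x}$ over $\R$, keeping the quadratic‑variation increments unexpanded. Two identities in the space variable do the essential work: $\int_\R(\int_x^{x+h}L_t^y\,\diff{y})(\Delta_x^hL_t^x)^2\,\diff{x}=h\int_\R L_t^x(\Delta_x^hL_t^x)^2\,\diff{x}+o_P(h^{5/2})$, and $3\int_\R(\langle L_t^\cdot\rangle_{x+h}-\langle L_t^\cdot\rangle_x)^2\,\diff{x}=48h^2\int_\R(L_t^x)^2\,\diff{x}+o_P(h^{5/2})$, the errors being $h$‑locally dependent and centred or of smaller pointwise order, hence $\mathcal O(h^3)$ after integration. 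Hence
\[
  \int_\R(\Delta_x^hL_t^x)^4\,\diff{x}-24h\!\int_\R(\Delta_x^hL_t^x)^2L_t^x\,\diff{x}+48h^2\!\int_\R(L_t^x)^2\,\diff{x}=24\int_\R J_4^{(x,h)}\,\diff{x}+o_P(h^{5/2}),
\]
which is precisely the counter‑termed numerator in \eqref{eq:47} --- and, incidentally, shows that the last term written there should carry a factor $h^2$, since the Fubini identity $\int_\R(\Delta_x^hL_t^x)L_t^x\,\diff{x}=-\tfrac12\int_\R(\Delta_x^hL_t^x)^2\,\diff{x}$ makes $\int_\R(\Delta_x^hL_t^x)L_t^x\,\diff{x}$ of order $h$, not $h^{5/2}$; this is the typographical point raised in the footnote. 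It remains to identify the limit of $24\,h^{-5/2}\int_\R J_4^{(x,h)}\,\diff{x}$.

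Since the finite‑variation part of $L_t^\cdot$ contributes only $o_P(h^{5/2})$ to $\int_\R J_4^{(x,h)}\,\diff{x}$, one may replace $L_t^\cdot$ inside the iterated integral by its martingale part $M$; Fubini then turns $\int_\R J_4^{(x,h)}\,\diff{x}$ into a fourth‑order iterated integral of $M$ with kernel $(h-(y_4-y_1))_+$ supported on $y_4-y_1\le h$. Although this random variable lies in the fourth Wiener chaos --- which is precisely why the moment method of \cite{rosen_clt_2011} breaks down at this order --- partitioning $\R$ into windows of width $h$ realizes it as a sum of $\mathcal O(h^{-1})$ centred, $1$‑dependent summands of size $\mathcal O(h^3)$, so a central limit theorem for weakly dependent triangular arrays applies (hypercontractivity in a fixed chaos gives $\E[\Xi^4]\le C(\E[\Xi^2])^2$, hence a Lyapunov ratio of order $h$). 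Here Pitman and Yor's asymptotic Ray--Knight theorem is invoked to show that the fine‑scale behaviour of $M$ asymptotically decouples from the profile, i.e.\ that the windowed pieces behave like iterated integrals of a Brownian motion run on the clock $4\int_0^\cdot L_t^y\,\diff{y}$ independently of $(L_t^x)_{x\in\R}$; combined with the It\^o isometry and the elementary integral $\int_0^h(h-s)^2s^{n-2}\,\diff{s}=2(n-2)!\,h^{n+1}/(n+1)!$, this gives, conditionally on $(L_t^x)_{x\in\R}$, $\operatorname{Var}(n!\int_\R J_n^{(x,h)}\,\diff{x})\sim\frac{2\cdot n!\cdot 4^n}{n+1}\,h^{n+1}\int_\R(L_t^x)^n\,\diff{x}$, which for $n=4$ is $2\cdot4!\cdot4^4/5=2^9 4!/5$ times $\int_\R(L_t^x)^4\,\diff{x}$. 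Therefore the limit of $24\,h^{-5/2}\int_\R J_4^{(x,h)}\,\diff{x}$ is, conditionally on $(L_t^x)_{x\in\R}$, centred Gaussian with this variance and independent of $(L_t^x)_{x\in\R}$, namely $\sqrt{(2^9 4!/5)\int_\R(L_t^x)^4\,\diff{x}}\;Z$ with $Z\sim\mathcal N(0,1)$ independent of $(L_t^x)_{x\in\R}$, as claimed.

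The main obstacle is the error control underlying the two space identities and the reduction from $L_t^\cdot$ to $M$: several of the discarded terms have the same pointwise order $h^{(n+1)/2}$ as the surviving one and become negligible only after integrating $\diff{x}$ and exploiting centring together with $h$‑local dependence, so the cancellations --- for instance the exact vanishing of $\int_\R(\langle L_t^\cdot\rangle_{x+h}-\langle L_t^\cdot\rangle_x)(\Delta_x^hL_t^x)\,\diff{x}$, obtained by Fubini in the space variable from the antisymmetry of the resulting kernel (this is what leaves no counterterm at order $n=3$), and the order‑$h$ identity for $\int_\R(\Delta_x^hL_t^x)L_t^x\,\diff{x}$ above --- must be tracked exactly, with estimates uniform in $x$ and summable over $\R$. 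A second, more structural difficulty is making the asymptotic decoupling precise: one needs the asymptotic Ray--Knight theorem jointly in the profile and in the rescaled spatial increments of $L_t^\cdot$, so as to pass from the chaos‑valued $h^{-5/2}\int_\R J_4^{(x,h)}\,\diff{x}$ to a limit that is conditionally Gaussian, profile‑independent, and carries precisely the constant $2^9 4!/5$.
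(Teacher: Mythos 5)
Your overall architecture coincides with the paper's: Perkins' spatial semimartingale decomposition, the Kailath--Segall identity to isolate $q!\int_{\R} I_q(x,x+h)\,\diff{x}$ as the leading term plus the compensator built from $\bigl(4\int_x^{x+h}L_t^u\,\diff{u}\bigr)^k$, and the asymptotic Ray--Knight theorem to identify the limit as a mixed Gaussian independent of the profile; your variance computation $\int_0^h(h-s)^2s^{n-2}\,\diff{s}=2(n-2)!\,h^{n+1}/(n+1)!$ reproduces the paper's constant $c_q^2=2^{2q+1}q!/(q+1)$ correctly. However, there is a genuine gap exactly where the paper itself stops short. Your two ``space identities'' --- replacing $\int_x^{x+h}L_t^y\,\diff{y}$ by $hL_t^x$ inside $\int_{\R}(\Delta_x^hL_t^x)^2(\cdots)\,\diff{x}$ and replacing $\bigl(\int_x^{x+h}L_t^u\,\diff{u}\bigr)^2$ by $h^2(L_t^x)^2$, each with error $o_P(h^{5/2})$ --- are precisely the step the paper declares not to be straightforward (Remark~\ref{rem:4}, item 3) and does \emph{not} prove. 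The paper only establishes the limit theorem with the compensator left in the form $-24\int(\Delta_x^hL_t^x)^2\int_x^{x+h}L_t^u\,\diff{u}\,\diff{x}+48\int\bigl(\int_x^{x+h}L_t^u\,\diff{u}\bigr)^2\diff{x}$, i.e.\ the conjecture only ``with a slightly different compensator.'' Your justification (``$h$-locally dependent and centred or of smaller pointwise order, hence $\mathcal{O}(h^3)$ after integration'') does not withstand scrutiny: the error $\int_x^{x+h}(L_t^y-L_t^x)\,\diff{y}$ is of pointwise order $h^{3/2-\varepsilon}$ only (H\"older continuity of local time gives no better than exponent $1/2-\varepsilon$), so the discarded terms are of order $h^{5/2-\varepsilon}$ --- the \emph{same} order as the surviving term --- and the error is not centred (its drift component involves the finite-variation kernel $A_{t,u}$, and the martingale component is strongly correlated with $(\Delta_x^hL_t^x)^2$ over the same window, whose conditional third moments do not vanish). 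Without a genuine second-moment cancellation argument here, you have not reached Rosen's compensator.

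Two smaller points. First, your route to the Gaussian limit of $24h^{-5/2}\int_{\R}J_4^{(x,h)}\,\diff{x}$ mixes two mechanisms --- a triangular-array CLT for ``$1$-dependent'' windowed blocks with hypercontractivity, and the asymptotic Ray--Knight theorem for ``decoupling'' --- neither of which is carried out. The blocks are not independent (they are built from the same clock $4\int L_t^u\,\diff{u}$), and what is actually needed is joint/stable convergence so that $Z$ comes out independent of the whole profile with the random variance $\int(L_t^x)^4\,\diff{x}$. The paper does this cleanly by realizing $\int_{\R}I_q(x,x+h)\,\diff{x}$ via stochastic Fubini as the terminal value of a single martingale $\widetilde{M}^h$ in the space variable, and then verifying the two bracket conditions $\langle\widetilde{M}^h,M\rangle\to0$ and $\langle\widetilde{M}^h,\widetilde{M}^h\rangle\to c_q^2\int(L_t^u)^q\,\diff{u}$ in $L^1$ (Lemmas~\ref{lem:3} and~\ref{lem:4}); these are the technical heart of the argument and are absent from your sketch. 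Second, your observation that the term $-\int(\Delta_x^hL_t^x)L_t^x\,\diff{x}$ is of order $h$ (via $\int(\Delta_x^hL_t^x)L_t^x\,\diff{x}=-\tfrac12\int(\Delta_x^hL_t^x)^2\,\diff{x}$) and hence cannot appear as written is a fair reading of the typographical issue flagged in the paper's footnote, but it means you, like the paper, are proving a corrected statement rather than the literal one.
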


Though the techniques are clearly different, all aforementioned proofs approach
the problem through the time domain (the variable $t$). For example, in Rosen's
method of moments proofs, intersection local times of the type
$\int_{-\infty}^{\infty} L_s^x \widetilde{L}_t^x \diff{x}$ are considered, where
$\widetilde{L}_t^x$ is the local time of another  
Brownian motion, independent of the one driving $L_t^x$, and in the Malliavin
calculus approach of Hu and Nualart the quantity $\int_{-\infty}^{\infty}
\Delta_x^h L_t^x \diff{x}$ is expressed as a stochastic integral indexed by $t$. 
In this paper, we take a fundamentally different approach and exclusively work
in space (the variable $x$), which seems to be more natural for the problem
at hand. This allows us to prove the following limit theorem for arbitrary
integer powers $q$.  

\begin{theorem}
  \label{thm:4}
  For integers $q \geq 2$ it holds that
  \begin{equation}
    \label{eq:11}
    \frac{1}{h^{\frac{q+1}{2}}}
    \left(
      \int_{-\infty}^{\infty} \left( L_t^{x+h} - L_t^x \right)^q 
      \diff{x} 
      +
      R_{q,h} \right)
    \xrightarrow{d}
    c_q
    \sqrt{
       \int_{-\infty}^{\infty} (L_t^x)^{q} \diff{x}
    } \, Z,
  \end{equation}
  where $Z$ is a standard Gaussian random variable, independent of
  $(L_t^x)_{x \in \R}$, the random variable $R_{q,h}$ is given by
  \begin{equation*}
    R_{q,h} = 
          \sum_{k=1}^{\lfloor \frac{q}{2} \rfloor} a_{q,k} 
      \int_{-\infty}^{\infty} \left( L_t^{x+h} - L_t^x \right)^{q-2k} \left(
        4 \int_x^{x+h} L_t^u \diff{u} \right)^k \diff{x}
  \end{equation*}
  and the constants $a_{q,k}$ and $c_q$ are defined as
  \begin{equation}
    \label{eq:8}
    a_{q,k} = \frac{(-1)^k q!}{2^k k! (q-2k)!} \qquad \text{and} \qquad
    c_q = \sqrt{\frac{2^{2q+1} q!}{q+1}}.
  \end{equation}
\end{theorem}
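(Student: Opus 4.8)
The starting point is the observation, hinted at by the paper's abstract, that Perkins' semimartingale decomposition expresses the local time $x \mapsto L_t^x$ (for fixed $t$, say up to an inverse local time $\tau$) as a semimartingale in the space variable whose martingale part $M_t^x$ has quadratic variation $\langle M_t^\cdot\rangle_x = 4\int_0^x L_t^u\,\diff u$ and whose bounded-variation part is $L_t^0 - x\,\mathrm{sgn}(\cdot) + (\text{something})$; more precisely $L_t^{x+h} - L_t^x$ decomposes as an increment of $M$ plus a drift term that is asymptotically negligible at the relevant scale. The key algebraic device is then the Kailath--Segall identity: for a continuous semimartingale with martingale part $M$ and quadratic variation $\langle M\rangle$, the iterated integrals $P_n$ defined by $P_0 = 1$, $P_n = \int P_{n-1}\,\diff M - \int P_{n-2}\,\diff\langle M\rangle$ satisfy $P_n = $ a Hermite-type polynomial in $(M,\langle M\rangle)$. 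Applying this on each interval $[x,x+h]$ to the increment $\Delta_x^h L_t^x$ and the ``bracket increment'' $4\int_x^{x+h}L_t^u\,\diff u$ produces exactly the polynomial combination appearing in $R_{q,h}$: that is, $(\Delta_x^h L_t^x)^q + R_{q,h}$ is, up to lower-order drift corrections, $q!$ times a $q$-fold iterated stochastic integral $\int_{x<u_1<\dots<u_q<x+h}\diff M^{u_1}\cdots\diff M^{u_q}$ summed (integrated) over $x$.

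First I would make the Perkins decomposition precise and isolate the martingale increment, checking that replacing $L_t^{x+h}-L_t^x$ by the pure martingale increment and $4\int_x^{x+h}L_t^u\,\diff u$ by the exact bracket increment changes the left-hand side of \eqref{eq:11} by a term that is $o_P(1)$ after dividing by $h^{(q+1)/2}$; this is where the precise form of the correction polynomial $a_{q,k}$ and the combinatorial identity \eqref{eq:8} get pinned down, via the explicit Hermite/Kailath--Segall expansion. Second, I would rewrite $h^{-(q+1)/2}\int_\R \big[(\Delta_x^h L_t^x)^q + R_{q,h}\big]\,\diff x$ as $h^{-(q+1)/2} q!\int_\R P_q(x)\,\diff x$ where $P_q(x)$ is the $q$-fold iterated integral of $\diff M$ over $[x,x+h]$, and identify the leading-order behaviour: heuristically $P_q(x) \approx \frac{1}{q!}(M_t^{x+h}-M_t^x)^q$ has conditional variance (given $\langle M\rangle$) of order $(\langle M\rangle_{x+h}-\langle M\rangle_x)^q \approx (4 L_t^x)^q h^q$, so after integrating in $x$ and dividing by $h^{(q+1)/2}$ one is looking at a sum over a partition of $\R$ into $h$-blocks of approximately independent (given the local time) mean-zero terms with the right variance, suggesting a conditional CLT with limiting variance $c_q^2\int_\R (L_t^x)^q\,\diff x$. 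Third, I would run this conditional CLT rigorously: condition on the local time field $(L_t^x)_x$, establish a martingale CLT (in the space variable) for the normalized iterated integral — checking the Lindeberg/negligibility condition and the convergence of the predictable quadratic variation to $c_q^2\int_\R(L_t^x)^q\,\diff x$ — and then invoke the asymptotic Ray--Knight theorem of Pitman and Yor to pass from the $\tau$-parametrized (inverse-local-time) version back to fixed time $t$ and to handle the joint convergence that yields the stated mixture $c_q\sqrt{\int_\R (L_t^x)^q\,\diff x}\,Z$ with $Z$ independent of the local time.

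The main obstacle I expect is the rigorous control of the iterated stochastic integral $P_q$ in the space variable and, in particular, verifying the predictable-quadratic-variation convergence and the asymptotic negligibility conditions uniformly enough to get a genuine conditional CLT; the iterated integral $P_q$ is not simply a power of the martingale increment, so its bracket involves all the lower-order $P_j$'s, and one must show these cross-terms do not contribute to the limit at scale $h^{(q+1)/2}$. A secondary technical difficulty is the passage from inverse local time $\tau$ to deterministic time $t$: the integration region in $x$ is random (essentially the support of $L_t^\cdot$), and one needs the Pitman--Yor asymptotic Ray--Knight theorem together with a tightness/uniform-integrability argument to control the contribution of large $|x|$ (where $L_t^x$ is small but the number of $h$-blocks grows like $h^{-1}$). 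Everything else — the combinatorial identification of $R_{q,h}$, the Kailath--Segall expansion, and the bookkeeping of drift corrections — is, I expect, routine once the decomposition is set up correctly.
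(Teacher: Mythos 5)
Your overall architecture (Perkins' decomposition in space, Kailath--Segall to identify $(\Delta_x^h L_t^x)^q + R_{q,h}$ with $q!\int I_q(x,x+h)\diff{x}$ up to drift corrections, then a limit theorem for that iterated integral) matches the paper's. But there are two genuine gaps in how you propose to finish.

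First, the ``conditional CLT given the local time field'' with ``approximately independent $h$-blocks'' is not a workable mechanism as stated. The martingale $M$ driving the iterated integrals is built from the same noise as the local time (its bracket is $4\int_{-\infty}^x L_t^u\diff{u}$), so you cannot condition on $(L_t^x)_x$ and treat the blocks as independent; the asymptotic independence of $Z$ from the local time is exactly what has to be \emph{proved}. The paper does this by writing $q!\,h^{-(q+1)/2}\int_{-\infty}^x I_q(u,u+h)\diff{u}$ (after a stochastic Fubini) as a single martingale $\widetilde{M}^h_x$ in the space variable, proving that the cross-bracket $\langle \widetilde{M}^h, M\rangle$ vanishes (Lemma~\ref{lem:3}) and that $\langle\widetilde{M}^h,\widetilde{M}^h\rangle_x \to c_q^2\int_{-\infty}^x (L_t^u)^q\diff{u}$ (Lemma~\ref{lem:4}), and then invoking the asymptotic Ray--Knight (Knight) theorem, which converts the vanishing cross-bracket into asymptotic independence of the Dambis--Dubins--Schwarz Brownian motions. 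Relatedly, you misidentify the role of that theorem: it is not used to pass from an inverse local time $\tau$ back to fixed $t$ (no such passage occurs --- the whole proof lives at fixed $t$); it is the device that delivers the mixed Gaussian limit and the independence of $Z$.

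Second, your claim that replacing $L_t^{x+h}-L_t^x$ by the martingale increment only introduces $o_P(h^{(q+1)/2})$ errors is false for the first-order drift term: $q\int_{-\infty}^{\infty} (M_{x+h}-M_x)^{q-1}\,\Delta_x^h V_x\diff{x}$ has exactly the critical order $h^{(q-1)/2}\cdot h = h^{(q+1)/2}$ by naive size estimates, so it cannot be discarded by order counting. The paper must apply Kailath--Segall a second time to $I_{q-1}^h$, use the exact combinatorial cancellation $q\,a_{q-1,k}=(q-2k)a_{q,k}$ against the $k$-sum coming from the expansion of $(I_1^h)^q$, and then show via stochastic Fubini and the Vitali convergence theorem that the residual $\int I_{q-1}^h(x)\Delta_x^h V_x\diff{x}$ is $o(h^{(q+1)/2})$. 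This step is where the precise form of $R_{q,h}$ (with $\Delta_x^h L_t^x$ rather than the martingale increment in the compensator) is actually secured, and it is not routine bookkeeping.
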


It will be shown later that $R_{2,h}=4ht$ and
$R_{3,h}=0$, so that for $r=0$, the known central limit theorems~\eqref{eq:9}
and~\eqref{eq:32}, corresponding to $q=2$ and $q=3$, respectively, are included
as special cases. Moreover, for $q=4$, Theorem~\ref{thm:4} proves Rosen's
Conjecture~\ref{conj:4} (with a slightly different compensator; see
Remark~\ref{rem:4} for details). Starting from $q=4$, however, the compensator
term $R_{q,h}$ becomes random and~\eqref{eq:11} thus no longer states a
\emph{central} 
limit theorem. To remedy the situation, one would have to prove that $R_{q,h}$
can be replaced by its expectation. Unfortunately, we have to leave this
problem for further research and again refer to Remark~\ref{rem:4} for a more
detailed discussion of this point.

Our approach allows for generalizations in several directions. For
example, as we never touch the time variable $t$ in our proofs, it can be
replaced with a suitable stopping time $\tau$ (see Theorem~\ref{thm:3}). 

The proof of Theorem~\ref{thm:4} can roughly be sketched as follows. The
starting point is the 
semimartingale decomposition of Browian local time in space, first proven by
Perkins in his celebrated paper~\cite{perkins_local_1982} and subsequently
refined by Jeulin in~\cite{jeulin_application_1985}). 
Decomposing  $L_t^{x}$ into its martingale and finite
variaton part $M_x$ and $V_x$, respectively (where we have
suppressed the dependence on $t$ for better legibility), some careful stochastic
analysis yields that  
\begin{equation*}
  \int_{-\infty}^{\infty} \left( L_t^{x+h} - L_t^x \right)^{q} \diff{x}
  \approx
  \int_{-\infty}^{\infty} \left( M_{x+h} - M_x \right)^{q} \diff{x}.
\end{equation*}
From here, through an iterative application of the Kailath-Segall formula 
(see~\cite{segall_orthogonal_1976}), the leading term of the integral on the 
right hand side turns out to be a certain iterated integral, whose limit can be
obtained by Pitman and Yor's asymptotic Ray-Knight Theorem
(see~\cite{pitman_asymptotic_1986}). This asymptotic Ray-Knight Theorem was also
used by Hu and Nualart 
in~\cite{hu_stochastic_2009} and~\cite{hu_central_2010} for their Malliavin
calculus proofs, with the notable difference that in their case the
Dambis-Dubins-Schwarz Brownian motion comes from a time change, while we obtain
it through a space change. 

The rest of this paper is organized as follows. In Section~\ref{s:prelim}, we
introduce some results from the literature and fix the notation which
is used throughout this paper. In Section~\ref{s:integrals}, several 
crucial types of iterated integrals are introduced, including a rather technical
analysis of their asymptotics. The proofs of Theorem~\ref{thm:4} and several 
generalizations are provided in Section~\ref{s:main}.

\section{Preliminaries and notation}
\label{s:prelim}

In the sequel we will deal with stochastic integrals of the form
$\int_{-\infty}^x Y_u \diff{M_u}$, where $(M_u)_{u \geq -\infty}$ is a
continuous martingale.
In general, constructing such stochastic integrals is a very
delicate task, which, to the best knowledge of the author, has only recently
been treated rigorously and in full generality by Basse-O'Connor, Graversen and
Pedersen  
in~\cite{basse-oconnor_martingale-type_2010}
and~\cite{basse-oconnor_stochastic_2014}. For example, an application of
Doob's backward martingale convergence theorem (see for example~\cite[p. 328,
Thm. 4.2]{doob_stochastic_1990}) shows that two sided Browian 
motion can neither be a martingale, nor a local martingale in any filtration and
thus can not be integrated against in the classical way. To overcome this
problem, one can introduce the notion of increment martingale
(see~\cite{basse-oconnor_martingale-type_2010}). 

However, in this paper all processes of the form $(X_u)_{u \geq -\infty}$
vanish at $-\infty$, so that stochastic integration can be defined in the
classical way, starting from simple processes and building up to
semimartingales, as for example outlined in the
monographs~\cite{revuz_continuous_1999,karatzas_brownian_1991,jacod_calcul_1979,
  rogers_diffusions_1994,rogers_diffusions_2000}. Due to the finite limit at
$-\infty$, the usual index interval $[0,\infty)$ can be replaced by
$\Set{-\infty} \cup \R$ and all tools from standard textbook martingale theory,
such as It\^o's formula, the Burkholder-Davis-Gundy inequality etc. remain
valid.

Let us define several classical spaces of martingales and 
integrators. We set
$H^2_{0}$ to be the space of $L^2$-bounded continuous martingales indexed by
$[-\infty,\infty)$ and vanishing at $-\infty$. Given $M \in
H^2_0$, the Hilbert space $L^2(M)$ consists of all (equivalence
classes of) progressively measurable processes $(Y_x)_{x \geq -\infty}$ such
that 
\begin{equation*}
  \norm{Y}_M = \Ex{\int_{-\infty}^{\infty} Y_u^2 \diff{\left\langle M
      \right\rangle_u}} < \infty,
\end{equation*}
where as usual $\left\langle M \right\rangle_u$ denotes the quadratic variation 
of $M$. A continuous local martingale $M$ belongs to $H^2_{0,\text{loc}}$, if
its localized version is an element of $H^2_0$ and, given a continuous local
martingale $M=(M_x)_{x\geq -\infty}$, the space $L^2_{\text{loc}}(M)$ contains
all progressively measurable processes $Y$ such that
\begin{equation*}
  \Ex{\int_{-\infty}^{T_n} Y_u^2 \diff{\left\langle M \right\rangle}_u} <
  \infty 
\end{equation*}
for a sequence $(T_n)$ of stopping times increasing to infinity. We will make
frequent use of the following result from stochastic analysis.

\begin{theorem}[Stochastic Fubini Theorem]
    Let $(A,\mathcal{A})$ be a measurable space, $\mu$ be a $\sigma$-finite,
  measure on $A$ and denote by $\mathcal{P}$ the predictable
  $\sigma$-algebra on $[-\infty,\infty) \times \Omega$. Furthermore, let $(M)_{x
    \geq -\infty} \in H_{0,\text{loc}}^2$ and $(X_{a,x})_{a \in A,x \in
    [-\infty,\infty)}$ be a continuous, $\mathcal{A} \otimes \mathcal{P}$-measurable 
  stochastic process indexed by $A \times [-\infty,\infty)$ which is
  $\mu$-integrable and assume that
  \begin{equation}
    \label{eq:16}
    \E \left[ 
    \int_{-\infty}^{\infty}
     \int_A^{} X_{a,u}^2 \mu(\diff{a}) \diff{\left\langle M,M
       \right\rangle}_u^{{T_n}}
      \right] < \infty,
  \end{equation}
  where $(T_n)_{n \geq 1}$ is a sequence of localizing stopping times for
  $M$. Then, for all $x \in [-\infty,\infty)$,
  \begin{equation}
    \label{eq:72}
    \int_{-\infty}^{x} \int_A^{} X_{a,u} \mu(\diff{a}) \diff{M_u}
    \overset{a.s.}{=}
    \int_A \int_{-\infty}^x X_{a,u} \diff{M_u} \, \mu(\diff{a}).
  \end{equation}
  In particular, the double integral on the right hand side exists.
\end{theorem}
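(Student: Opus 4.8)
The plan is to prove \eqref{eq:72} first for a multiplicative generating class of integrands, then bootstrap to the general case by a functional monotone class argument, and finally remove the localization in $M$. The first reduction is to the case $M \in H^2_0$: if \eqref{eq:72} holds for every stopped martingale $M^{T_n}$ (with the corresponding stopped integrand), then letting $n \to \infty$ and using the consistency of the stochastic integral under stopping, $\int_{-\infty}^x X_{a,u}\,\diff{M_u^{T_n}} = \left( \int_{-\infty}^\cdot X_{a,u}\,\diff{M_u} \right)^{T_n}$, together with dominated convergence in the parameter $a$ (justified by \eqref{eq:16}), recovers the identity for $M$. So we may assume $M \in H^2_0$, and, after an additional truncation of $X$ which we undo at the very end, we may further assume that $X$ is bounded.

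For bounded $\mathcal{A} \otimes \mathcal{P}$-measurable integrands, let $\mathcal{H}$ denote the collection of those $X$ for which the parametrized stochastic integral $(a,x) \mapsto \int_{-\infty}^x X_{a,u}\,\diff{M_u}$ admits a jointly continuous version which is $\mathcal{A}$-measurable and $\mu$-integrable in $a$ for every fixed $x$, and for which \eqref{eq:72} holds a.s. simultaneously for all $x$. For a product $X_{a,u} = f(a)\,K_u$ with $f$ bounded measurable on $A$ and $K$ a bounded predictable process, both sides of \eqref{eq:72} equal $\left( \int_A f \, \mu(\diff{a}) \right) \int_{-\infty}^x K_u\,\diff{M_u}$, so $\mathcal{H}$ contains all such products, and these form a multiplicative class generating $\mathcal{A} \otimes \mathcal{P}$. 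One then verifies that $\mathcal{H}$ is a vector space closed under bounded monotone limits: if $X^{(n)} \uparrow X$ with $X$ bounded, then $\int_A X^{(n)}_{a,u}\,\mu(\diff{a}) \to \int_A X_{a,u}\,\mu(\diff{a})$ boundedly, so the left-hand sides of \eqref{eq:72} converge in $L^2(P)$ by the It\^o isometry; for each fixed $a$ the inner stochastic integrals converge in $L^2(P)$, and after passing to a subsequence and invoking dominated convergence against $\mu$ (again using \eqref{eq:16}) the right-hand sides converge as well; the joint continuity and measurability of a version of the limit are obtained from a Kolmogorov--Chentsov argument, with the requisite moment bounds on increments in $a$ and $x$ supplied by the Burkholder--Davis--Gundy inequality applied to the $A$-indexed family of martingales. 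The functional monotone class theorem now yields $\mathcal{H} = \{\, X : X \text{ bounded and } \mathcal{A} \otimes \mathcal{P}\text{-measurable} \,\}$.

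Finally, for a general integrand $X$ satisfying \eqref{eq:16}, put $X^N = X\,\ind_{\abs{X} \leq N}$. Each $X^N$ is bounded, hence satisfies \eqref{eq:72}, and by \eqref{eq:16} and dominated convergence $X^N \to X$ in $L^2$ with respect to $\mu \otimes \langle M \rangle^{T_n} \otimes P$. The It\^o isometry shows the left-hand sides of \eqref{eq:72} for $X^N$ converge in $L^2(P)$ to the left-hand side for $X$, while Fubini applied to the nonnegative integrand $\abs{X - X^N}^2$, combined with the It\^o isometry and Jensen's inequality, shows the right-hand sides converge in $L^2(P)$ too; passing to the limit establishes \eqref{eq:72} in general and, in particular, the existence of the iterated integral on the right.

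The main obstacle is not the algebra underlying \eqref{eq:72} --- which is essentially trivial for product integrands --- but the measure-theoretic bookkeeping surrounding the parametrized stochastic integral $a \mapsto \int_{-\infty}^x X_{a,u}\,\diff{M_u}$. One must select a version that is jointly measurable (in fact jointly continuous) in $(a,x)$, so that the outer $\mu$-integral on the right of \eqref{eq:72} is meaningful and so that the exceptional null set can be chosen independently of $x$. The continuity and $\mathcal{A} \otimes \mathcal{P}$-measurability hypotheses on $X$ are precisely what make Kolmogorov--Chentsov applicable, and verifying that these properties persist through each of the two limiting procedures (the monotone limit and the truncation limit) is the delicate point of the proof.
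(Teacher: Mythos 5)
The paper does not actually prove this theorem: it cites Protter (Ch.~IV, Thm.~64) for semimartingales indexed by $[0,\infty]$ and asserts that the adaptation to the index set $[-\infty,\infty)$ is straightforward. Your proposal is therefore a from-scratch reconstruction. Its architecture --- localization to $M \in H^2_0$, a functional monotone class argument seeded by product integrands $f(a)K_u$, and a final truncation --- is exactly the standard one behind the cited result, so the overall strategy is sound.

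There is, however, a genuine gap at the step you yourself identify as the delicate point. You propose to obtain a jointly continuous, jointly measurable version of $(a,x) \mapsto \int_{-\infty}^x X_{a,u}\,\diff{M_u}$ by Kolmogorov--Chentsov, ``with the requisite moment bounds on increments in $a$ and $x$''. But $(A,\mathcal{A})$ is an abstract measurable space carrying no topology or metric, so ``increments in $a$'' and continuity in $a$ are meaningless and Kolmogorov--Chentsov cannot be applied in that variable; the continuity hypothesis on $X$ refers only to $x$. What is actually needed is $\mathcal{A}\otimes\mathcal{F}$-measurability of a version of the parametrized integral, and this must be propagated through the limits themselves: for products $f(a)K_u$ the integral is $f(a)\int_{-\infty}^x K_u\,\diff{M_u}$, trivially jointly measurable, and an $L^2(\mu\otimes P)$-convergent sequence of jointly measurable functions has a jointly measurable limit along a subsequence converging $\mu\otimes P$-almost everywhere. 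Two smaller defects sit in the same part of the argument. First, for merely $\sigma$-finite $\mu$ your multiplicative class must be restricted to $f$ supported on sets of finite $\mu$-measure (otherwise $\int_A f\,\mu(\diff{a})$ need not exist), with $A$ exhausted by such sets afterwards. Second, in the monotone-limit step, ``passing to a subsequence'' separately for each fixed $a$ does not control the outer $\mu$-integral, since the subsequence may depend on $a$; you need the uniform estimate $\E\left[\int_A\abs{\int_{-\infty}^x (X^{(n)}_{a,u}-X_{a,u})\,\diff{M_u}}\,\mu(\diff{a})\right]\to 0$, which follows from deterministic Fubini, Cauchy--Schwarz and the It\^o isometry --- precisely the argument you do use, correctly, in the final truncation step.
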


\begin{proof}
  A proof for semimartingales indexed by $[0,\infty]$
  can for example be found in~\cite[Ch. IV,
  Thm. 64]{protter_stochastic_2004}. The adaptation to our setting is
  straightforward. 
\end{proof}

As already indicated, we also need (a slightly generalized version of) the
asymptotic Ray-Knight Theorem by Pitman and Yor
(see~\cite{pitman_asymptotic_1986} or the textbook~\cite[Ch. XIII,
Thm. 2.3]{revuz_continuous_1999}). Again, adapting  the original proof is
straightforward.  

\begin{theorem}[Asymptotic Ray-Knight Theorem,~\cite{pitman_asymptotic_1986}]
  \label{thm:ark}
   For $k \geq 2$ and $n \geq 1$ define a sequence $(M_{1,x}^n,M_{2,x}^n,
  \dots, M_{k,x}^n)$ of $k$-tuples of continuous local martingales
  $(M_{j,x}^n)_{x \geq -\infty} \in H^2_{0,\text{loc}}$ such that for fixed $j =
  1,\dots,k$ the limit $\left\langle M_j^n, M_j^n \right\rangle_{\infty}$
  is either infinite for all $n$, or finite for all $n$.
  After possibly enlarging the underlying probability space, each $M_j^n$
  posesses a Dambis-Dubins-Schwarz Brownian motion 
  $\beta^n_j$ and an associated time change $T_j^n(y)$, such that
  \begin{equation}
    \label{eq:10}
    M_{j,T_j^n(y)}^n = \beta^n_{j,y}
  \end{equation}
  for $y \geq 0$ and $1 \leq j \leq k$ (for a proof, see for
  example~\cite[Ch. V, Thm. 1.7]{revuz_continuous_1999}).
  If for $a \geq 0$ and $1 \leq i,j \leq k$ with $i \neq j$
  \begin{equation}
    \label{eq:7}
    \sup_{x \in [-\infty,a]} \abs{\qvar{M_i^n}{M_j^n}_x} \to 0, \qquad (n \to
    \infty) 
  \end{equation}
  in probability, then the $k$-dimensional process $\beta^n_y =
  (\beta_{1,y}^n,\beta_{2,y}^n, \dots,\beta_{k,y}^n)_{y \geq 0}$ converges in
  distribution to a $k$-dimensional Brownian motion $(\beta_y)_{y \geq 0}$.
\end{theorem}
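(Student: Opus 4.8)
This is the multidimensional, asymptotic form of the Dambis--Dubins--Schwarz theorem, so the plan is to follow the proof of its classical counterpart (Knight's theorem on orthogonal continuous local martingales, whose one-dimensional case is \cite[Ch.~V, Thm.~1.7]{revuz_continuous_1999}) together with Pitman and Yor's asymptotic refinement (\cite{pitman_asymptotic_1986}, \cite[Ch.~XIII, Thm.~2.3]{revuz_continuous_1999}), and to check that the two points in which we deviate from the literature --- the parameter set $[-\infty,\infty)$ in place of $[0,\infty)$, and a sequence of $k$-tuples with merely asymptotically vanishing cross-brackets in place of a fixed pair of orthogonal martingales --- do not affect the argument.

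The first step is tightness of the laws of $\beta^n=(\beta^n_1,\dots,\beta^n_k)$ on $C([0,\infty),\R^k)$. This is immediate: by construction (after enlarging the probability space for those $j$ with $\langle M^n_j,M^n_j\rangle_\infty<\infty$) each coordinate $\beta^n_j$ is an honest standard one-dimensional Brownian motion, so the family $\{\beta^n_j:n\ge 1\}$ is tight, and tightness of the $\R^k$-valued family follows coordinatewise.

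The core of the proof is the identification of the subsequential limits. Passing, via a diagonal extraction over rational $a$, to a subsequence along which $\sup_{x\le a}\lvert\langle M^n_i,M^n_j\rangle_x\rvert\to 0$ almost surely and along which $\beta^n$ converges in law, let $\beta=(\beta_1,\dots,\beta_k)$ be the limit. Each $\beta_j$ is a standard Brownian motion, so by Lévy's characterization it suffices to find a filtration for which $\beta$ is a continuous local martingale with $\langle\beta_i,\beta_j\rangle_y=\delta_{ij}\,y$; I would take the natural filtration of $\beta$, so that the diagonal brackets are automatic and only the martingale property and the vanishing of the off-diagonal brackets remain to be checked. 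Here the asymptotic orthogonality of the $M^n_j$ enters: it forces the Dambis--Dubins--Schwarz Brownian motions to decouple in the limit, by the same mechanism --- the Kunita--Watanabe projection of $M^n_i$ onto $M^n_j$ --- that underlies Knight's theorem. Concretely I would work with the interleaved filtration $\mathcal G^n_y=\bigvee_j\mathcal F^n_{T^n_j(y)}$ and show, by applying the optional stopping theorem at the stopping times $T^n_j(y)$ and estimating with the Burkholder--Davis--Gundy inequality, that for $s<t$ the increment $\beta^n_{\cdot,t}-\beta^n_{\cdot,s}$ is, conditionally on $\mathcal G^n_s$, asymptotically $\mathcal N(0,(t-s)I_k)$, the error being governed by the supremum of $\lvert\langle M^n_i,M^n_j\rangle\rvert$ over the range of $x$ visited up to parameter $t$, which the uniform control on compacts sends to zero. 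Passing to the limit then yields the correct conditional law of the increments of $\beta$, which says precisely that $\beta$ is a $k$-dimensional Brownian motion; since every subsequential limit is therefore the same and the family is tight, $\beta^n$ converges in distribution to a $k$-dimensional Brownian motion.

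The main obstacle is exactly this last step, and within it the fact that the $k$ coordinates run on $k$ different, interleaved clocks $T^n_j(\cdot)$, so that $\beta^n$ fails to be a martingale for any single filtration; turning ``asymptotically a martingale for the combined filtration $\mathcal G^n$'' into a rigorous statement --- choosing the right stopping times, controlling the cross-terms uniformly over the random range of $x$ actually visited (which requires knowing that the time changes $T^n_j(y)$ are bounded in probability), and dealing with the coordinates for which $\langle M^n_j,M^n_j\rangle_\infty<\infty$ and $\beta^n_j$ was obtained by enlargement --- is where essentially all the work lies. The two advertised modifications are by contrast routine: since all our processes vanish at $-\infty$ and have a finite limit there, the usual martingale toolbox (optional stopping, Itô's formula, Burkholder--Davis--Gundy) carries over verbatim to the parameter set $[-\infty,\infty)$, as recalled in Section~\ref{s:prelim}, and the existence of the $\beta^n_j$ and $T^n_j$ with the stated properties is \cite[Ch.~V, Thm.~1.7]{revuz_continuous_1999}.
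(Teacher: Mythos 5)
The first thing to note is that the paper does not actually prove this theorem: it is quoted from Pitman--Yor (via \cite[Ch.~XIII, Thm.~2.3]{revuz_continuous_1999}) with the remark that adapting the original proof to the index set $[-\infty,\infty)$ is straightforward. Your sketch reconstructs the standard argument behind that citation --- tightness because each coordinate is itself a Brownian motion, then identification of subsequential limits via the asymptotic orthogonality --- and that is indeed the right strategy, with the two ``advertised modifications'' correctly dismissed as routine.

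The genuine gap is the step you yourself flag as ``where essentially all the work lies'': passing from hypothesis \eqref{eq:7}, which controls $\qvar{M_i^n}{M_j^n}_x$ only for deterministic $x$ in a compact set, to control of the cross-bracket at the random times $T_j^n(y)$ that your optional-stopping/BDG estimate actually needs. You propose to get this from ``the time changes $T_j^n(y)$ being bounded in probability'', but nothing in the stated hypotheses provides that, and without it the implication fails. Concretely, take $k=2$ and $M_{1,x}^n=M_{2,x}^n=B_{x/n}$ for $x\geq 0$ (extended by $0$ on $[-\infty,0]$): then $\qvar{M_1^n}{M_2^n}_x=x^{+}/n$, so \eqref{eq:7} holds for every finite $a$, while $T_j^n(y)=ny$ and $\beta_1^n=\beta_2^n=B$ for every $n$, so the conclusion is false. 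This is precisely why the hypothesis in \cite[Ch.~XIII, Thm.~2.3]{revuz_continuous_1999} is formulated intrinsically, as $\sup_{s\leq t}\abs{\qvar{M_i^n}{M_j^n}_{T_i^n(s)}}\to 0$ in probability; that is the quantity your second step estimates. So either the hypothesis must be restated in the time-changed form, or one must add the tightness of the clocks as an assumption. In the only use made of the theorem here (the proof of Theorem~\ref{thm:1}) this extra input is available, since $\qvar{\widetilde M^h}{\widetilde M^h}$ and $\qvar{M}{M}$ converge in $L^1$ to finite limits and the tails $\qvar{M}{M}_\infty-\qvar{M}{M}_{x_0}$ vanish as $x_0\to\infty$, so the deterministic-time and time-changed formulations coincide there; but your proof as written, for the statement as written, does not go through.
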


Let us now collect some of the results from Perkins' celebrated
paper~\cite{perkins_local_1982}.  

\begin{theorem}[\cite{perkins_local_1982}]
  \label{lem:8}
  For any $t > 0$ there exists a right continuous filtration
  $(\mathcal{G}_x)_{x \in \R}$ such that $x \mapsto L_t^x$ is a continuous
  $\mathcal{G}_x$-semimartingale on $\R$ with quadratic variation $4
  \int_{-\infty}^{x} L_t^u \diff{u}$. 
  Furthermore, if $L_t^x = M_{t,x} + \int_{-\infty}^{x} A_{t,u} \diff{u}$ is its 
  canonical decomposition, then it holds for any $p \geq 1$ that
  \begin{equation*}
    \norm{L_t^{\ast}}_p < \infty, \qquad \int_{-\infty}^{\infty} \abs{A_{t,x}}
    \diff{x} < \infty \qquad \text{and} \qquad
  \int_{-\infty}^{\infty} \norm{A_{t,x}}_p \diff{x} < \infty,
\end{equation*}
where $L_t^{\ast} = \sup_{x \in \R} \abs{L_t^x}$.
\end{theorem}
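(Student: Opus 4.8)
The plan is to prove this as a corollary of Perkins' original results, essentially quoting them; since the theorem is explicitly attributed to \cite{perkins_local_1982}, the ``proof'' should be a guided tour through that paper adapted to the bilateral index set, rather than a from-scratch argument. First I would recall Perkins' construction: fixing $t>0$, one uses the Tanaka formula $L_t^x = \abs{B_t - x} - \abs{B_0 - x} + \int_0^t \on{sgn}(B_s - x)\,\diff{B_s}$ (or its forward-in-$x$ variant) to exhibit, for each $x$, a decomposition into a part with paths of bounded variation in $x$ and a stochastic-integral part. The nontrivial point — and this is exactly Perkins' theorem — is that there is a \emph{single} filtration $(\mathcal{G}_x)_{x\in\R}$, obtained by an appropriate completion of the $\sigma$-algebras generated by the excursions of $B$ straddling levels up to $x$ together with $\mathcal{F}_t$, with respect to which $x\mapsto L_t^x$ is a continuous semimartingale on all of $\R$, right-continuous in $x$. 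I would state that the right-continuity and the semimartingale property are Perkins' Theorem~(1.7)/(1.11) in \cite{perkins_local_1982}, and note that the only adaptation needed for our two-sided index set $\{-\infty\}\cup\R$ is that $L_t^x \to 0$ as $x\to\pm\infty$ (the local time has compact support, contained in the range of $B_{[0,t]}$), so the process genuinely vanishes at $-\infty$ and no increment-martingale machinery is required.

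Next I would identify the quadratic variation. By the occupation times formula and the Ray--Knight description, the martingale part $M_{t,\cdot}$ of $L_t^{\cdot}$ has bracket $\langle M_t\rangle_x = 4\int_{-\infty}^x L_t^u\,\diff{u}$; concretely this follows because, along the relevant time change, $L_t^{\cdot}$ behaves like a squared Bessel-type diffusion whose driving noise has variance proportional to the local time itself, with the factor $4$ coming from $(\on{sgn})^2$ combined with the $2$ in $\diff{\abs{x}}$. I would cite this as the quadratic variation assertion in Perkins (Theorem~(1.7)) and simply transcribe it. The finite-variation part is $V_{t,x} = \int_{-\infty}^x A_{t,u}\,\diff{u}$ with density $A_{t,x}$; its existence in this absolutely continuous form, rather than merely as a bounded-variation process, is again part of Perkins' statement (or Jeulin's refinement \cite{jeulin_application_1985}), and I would invoke that reference for it.

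For the integrability bounds, $\norm{L_t^\ast}_p < \infty$ for all $p\ge 1$ follows from the explicit tail estimates on $\sup_x L_t^x$ in Perkins (his moment bounds on the local time, which ultimately rest on the BDG inequality applied to the Tanaka martingale and the exponential integrability of Brownian local time — e.g.\ the classical bound $\prob(L_t^\ast > \lambda) \le C\me^{-c\lambda^2/t}$). The two statements $\int_{-\infty}^\infty \abs{A_{t,x}}\,\diff{x} < \infty$ and $\int_{-\infty}^\infty \norm{A_{t,x}}_p\,\diff{x} < \infty$ are the content of Perkins' estimates on the drift density (his Lemma/Theorem on the $L^p$-norm of $A_{t,x}$, roughly $\norm{A_{t,x}}_p \le C_p\,g_p(x)$ for an integrable envelope $g_p$ supported essentially on the range of $B_{[0,t]}$), which I would quote. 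The only genuine obstacle — and it is a bookkeeping obstacle, not a conceptual one — is making sure that Perkins' results, stated for $x$ ranging over $\R$ with the process ``started'' at a fixed level, are correctly reassembled over the full line with the vanishing-at-$-\infty$ normalization and the single filtration $(\mathcal{G}_x)$ handling both tails; I would remark that this is routine given that $L_t^x$ is supported in a (random) compact interval, so all integrals over $\R$ are really integrals over that interval, and conclude by referring the reader to \cite{perkins_local_1982} and \cite{jeulin_application_1985} for the detailed arguments.
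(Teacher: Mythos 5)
The paper offers no proof of this statement at all---it is presented as a collection of ``results from Perkins' celebrated paper'' and simply quoted with the citation---so your strategy of assembling it by reference to \cite{perkins_local_1982} (with \cite{jeulin_application_1985} for the absolutely continuous form of the drift) is exactly the paper's approach. Your glue between the quoted pieces (the single filtration, the compact support of $x\mapsto L_t^x$ making the bilateral index set and the vanishing at $-\infty$ harmless, the factor $4$ in the bracket coming from the $2\sqrt{L_t^u}$ integrand of the martingale part, and the integrable $L^p$-envelope for $A_{t,x}$) is consistent with Perkins' results and with the explicit decomposition the paper records in Corollary~\ref{cor:2}.
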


As a corollary of Theorem~\ref{lem:8} we get the following explicit
semimartingale decomposition (see~\cite[Ch. II]{jeulin_application_1985} for
several extensions). 

\begin{corollary} \label{cor:2} 
  In the setting of Theorem~\ref{lem:8}, there exists a
  $\mathcal{G}_x$-Browian motion $(\beta_x)_{x \in \R}$, such that 
  \begin{equation}
    \label{eq:48}
    L_t^x =
    \begin{cases}
      L_t^0 + 2 \int_0^{x} \sqrt{L_t^u} \diff{\beta_u} + \int_0^x A_{t,u}
      \diff{u} & \text{if $x \geq 0$} \\
      L_t^0 - 2 \int_x^{0} \sqrt{L_t^u} \diff{\beta_u} - \int_x^0 A_{t,u}
      \diff{u} & \text{if $x < 0$}.
    \end{cases}
  \end{equation}
\end{corollary}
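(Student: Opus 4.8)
The plan is to read the statement off directly from the canonical decomposition supplied by Theorem~\ref{lem:8}. Write $L_t^x = M_{t,x} + \int_{-\infty}^x A_{t,u}\,\diff u$, so that $M_{t,\cdot} = L_t^\cdot - \int_{-\infty}^\cdot A_{t,u}\,\diff u$ is a continuous $\mathcal G_x$-martingale on $[-\infty,\infty)$ vanishing at $-\infty$. The first step is to check that $M_{t,\cdot} \in H^2_0$: by Theorem~\ref{lem:8} one has $\norm{M_{t,x}}_2 \le \norm{L_t^\ast}_2 + \int_{-\infty}^\infty \norm{A_{t,u}}_2\,\diff u < \infty$ uniformly in $x$, while the occupation times formula gives $\int_{-\infty}^\infty L_t^u\,\diff u = t$ and hence $\langle M_t \rangle_\infty = 4t$. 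Since the bounded-variation part does not contribute to the quadratic variation, the bracket of $M_{t,\cdot}$ coincides with that of $L_t^\cdot$, i.e.\ $\langle M_t\rangle_x = 4\int_{-\infty}^x L_t^u\,\diff u$; in particular it is absolutely continuous in $x$ with density $4L_t^x \ge 0$.

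The crux is then the classical representation of a continuous martingale with absolutely continuous bracket as an It\^o integral against a Brownian motion (see~\cite[Ch.~V]{revuz_continuous_1999} or~\cite[Ch.~3]{karatzas_brownian_1991}). The density $4L_t^x$ degenerates --- it vanishes on the complement of the random but a.s.\ bounded support of $x\mapsto L_t^x$ --- so one cannot simply invert $\diff M_{t,u} = 2\sqrt{L_t^u}\,\diff\beta_u$. The standard remedy, which I would use, is to enlarge the probability space by an independent Brownian motion $B$ indexed by $\R$ with $B_0=0$ and to set
\[
  \beta_x \;=\; \int_0^x \frac{\ind_{\Set{L_t^u > 0}}}{2\sqrt{L_t^u}}\,\diff M_{t,u}
  \;+\; \int_0^x \ind_{\Set{L_t^u = 0}}\,\diff B_u ,
\]
with the usual convention $\int_0^x = -\int_x^0$ for $x<0$. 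The first integrand lies in $L^2(M_t)$ because $\ind_{\Set{L_t^u>0}}$ is supported in the range of the path on $[0,t]$ and thus Lebesgue-integrable with finite expectation; a short computation gives $\langle\beta\rangle_x - \langle\beta\rangle_y = x-y$ for $y<x$, so by L\'evy's characterization $\beta$ is a Brownian motion indexed by $\R$ with $\beta_0=0$, adapted to the slightly enlarged filtration $\mathcal G_x \vee \sigma(B_u : u\le x)$. Moreover $\diff M_{t,u}$ is carried by $\Set{L_t^u>0}$, since the bracket of $M_{t,\cdot}$ is flat off that set, whence $2\int_0^x\sqrt{L_t^u}\,\diff\beta_u = \int_0^x \ind_{\Set{L_t^u>0}}\,\diff M_{t,u} = M_{t,x}-M_{t,0}$ for every $x\in\R$. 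I expect this degeneracy of the bracket --- together with the bookkeeping of stochastic integrals on the index set $[-\infty,\infty)$ fixed in Section~\ref{s:prelim} --- to be the only point requiring care; the rest is routine.

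It remains to separate the two cases. Subtracting the identity $L_t^x = M_{t,x} + \int_{-\infty}^x A_{t,u}\,\diff u$ evaluated at $x$ and at $0$ gives $L_t^x - L_t^0 = (M_{t,x}-M_{t,0}) + \int_0^x A_{t,u}\,\diff u$, again with $\int_0^x = -\int_x^0$ for $x<0$. Substituting $M_{t,x}-M_{t,0} = 2\int_0^x\sqrt{L_t^u}\,\diff\beta_u$ and resolving the convention according to the sign of $x$ yields precisely~\eqref{eq:48}.
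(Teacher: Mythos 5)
Your argument is correct, and it supplies exactly the standard proof that the paper itself omits: the text states the corollary without proof and simply points to Jeulin~\cite{jeulin_application_1985}, so there is no in-paper argument to compare against. Your route --- identify $\langle M_t\rangle_x = 4\int_{-\infty}^x L_t^u\,\diff u$ from Theorem~\ref{lem:8}, note the density $4L_t^u$ of the bracket, and invoke the representation of a continuous martingale with absolutely continuous bracket as $\int 2\sqrt{L_t^u}\,\diff\beta_u$ --- is the intended one, and you correctly isolate the only delicate point: $L_t^u$ vanishes on a set of infinite Lebesgue measure (the two half-lines outside the range of the path), so the auxiliary independent Brownian motion $B$ on the zero set is genuinely needed, and the integrability of $\ind_{\Set{L_t^u>0}}/(2\sqrt{L_t^u})$ against $\diff\langle M_t\rangle$ reduces to the a.s.\ boundedness and integrability of the support, as you say. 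The one cosmetic discrepancy is that your $\beta$ is a Brownian motion only for the enlarged filtration $\mathcal G_x \vee \sigma(B_u : u\le x)$, whereas the corollary asserts a $\mathcal G_x$-Brownian motion; this is the usual abuse in such representation theorems (the enlargement is independent, so $M_{t,\cdot}$ and all subsequent stochastic integrals in the paper remain martingales), but it is worth stating explicitly that one replaces $\mathcal G_x$ by this enlargement once and for all.
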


In fact, as the next Lemma shows, the integrability property of the $L^p$-norms
of the finite variation kernel $A_{t,x}$ are also true for the local time itself.

\begin{lemma}
  \label{lem:5}
  For $p\geq 1$ and $a > 0$, it holds that
  \begin{equation}
    \label{eq:75}
    \int_{-\infty}^{\infty} \norm{L_t^x}_p^a \diff{x} < \infty.
  \end{equation}
\end{lemma}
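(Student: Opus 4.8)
The plan is to bound $\norm{L_t^x}_p$ by a function of $x$ that is integrable over $\R$, for which the key resource is the known Gaussian-type tail behaviour of Brownian local time in the space variable. Concretely, $L_t^x = 0$ whenever Brownian motion started at $0$ does not reach level $x$ before time $t$, so on the event $\{L_t^x > 0\}$ the Brownian path must hit $x$, which has probability decaying like $\exp(-x^2/(2t))$ as $\abs{x} \to \infty$. Thus I would write, using the Cauchy--Schwarz inequality applied to $L_t^x = L_t^x \ind_{\{L_t^x > 0\}} = L_t^x \ind_{\{T_x \leq t\}}$ (where $T_x = \inf\{s : B_s = x\}$),
\begin{equation*}
  \norm{L_t^x}_p = \left( \Ex{(L_t^x)^p \ind_{\{T_x \leq t\}}} \right)^{1/p}
  \leq \norm{L_t^{\ast}}_{2p} \, \prob(T_x \leq t)^{1/(2p)},
\end{equation*}
and then invoke the finiteness of $\norm{L_t^\ast}_{2p}$ from Theorem~\ref{lem:8} together with the standard estimate $\prob(T_x \leq t) = \prob(\sup_{s \leq t} B_s \geq \abs{x}) = 2\prob(B_t \geq \abs{x}) \leq \exp(-x^2/(2t))$.

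From here the argument is routine: raising to the power $a$ gives $\norm{L_t^x}_p^a \leq \norm{L_t^\ast}_{2p}^a \exp(-a x^2/(4pt))$, and since the right-hand side is a constant multiple of a Gaussian density it is integrable over $\R$, yielding~\eqref{eq:75}. One should take a little care that the measurability and the stopping-time identity $\{L_t^x > 0\} = \{T_x \leq t\}$ hold up to a null set — this is a standard consequence of the fact that $x \mapsto L_t^x$ is supported on the (closed) range of the Brownian path on $[0,t]$, which is exactly the interval $[\min_{s \leq t} B_s, \max_{s \leq t} B_s]$.

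The only genuine subtlety, and hence the main obstacle, is justifying the split at small $\abs{x}$ or, more precisely, making sure nothing degenerates near $x = 0$: there the tail bound gives no decay, but the integrand is simply bounded by $\norm{L_t^\ast}_{2p}^a < \infty$ on any fixed compact set, so the contribution of $\{\abs{x} \leq K\}$ is trivially finite and only the two tails $\{\abs{x} > K\}$ need the Gaussian estimate. An alternative, if one wishes to avoid even invoking $\norm{L_t^\ast}_{2p}$, is to use the explicit first Ray--Knight description of $\{L_t^x\}$ as a squared Bessel process run in the space variable, from which exponential moments and Gaussian spatial tails follow directly; but piggybacking on Theorem~\ref{lem:8} is the shortest route and keeps the exposition self-contained within the results already cited.
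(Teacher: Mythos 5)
Your argument is correct, and it takes a genuinely different route from the paper. The paper reduces to $t=1$ by scaling and to $x>0$ by symmetry, and then invokes Tak\'acs's explicit formula
\begin{equation*}
  \norm{L_1^x}_p = \frac{2}{\sqrt{2\pi}}\left( x^{p+1} \int_1^{\infty} \me^{-x^2y^2/2}(y-1)^p \diff{y}\right)^{1/p},
\end{equation*}
from which the Gaussian decay $\norm{L_1^x}_p^a \leq \alpha \me^{-\beta x^2}$ for large $x$ is read off and the integral is split into a compact part and a tail. You instead factor $L_t^x = L_t^x \ind_{\{T_x \leq t\}}$, apply Cauchy--Schwarz to get $\norm{L_t^x}_p \leq \norm{L_t^{\ast}}_{2p}\,\prob(T_x \leq t)^{1/(2p)}$, and use the reflection principle for the Gaussian tail; the conclusion is the same. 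What your approach buys is self-containedness: it needs only the maximal bound $\norm{L_t^{\ast}}_{2p}<\infty$ already recorded in Theorem~\ref{lem:8} plus the reflection principle, avoiding the external explicit-moment formula entirely, and it handles all $t>0$ and both signs of $x$ at once without the scaling and symmetry reductions. What the paper's route buys is sharper, explicit control of the constants in the decay (which is not needed for the lemma). Your auxiliary remarks are also sound: the inclusion $\{L_t^x>0\}\subseteq\{T_x\leq t\}$ holds because the local time at level $x$ cannot increase before the path first reaches $x$, and the split between $\abs{x}\leq K$ (bounded by $\norm{L_t^{\ast}}_{2p}^a$) and the Gaussian tails is exactly the same bookkeeping the paper does with its threshold $x_0$.
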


\begin{proof}
  For $t=0$ the assertion is trivial. If $t>0$, we have by scaling that
  $\norm{L_t^x}_p = \sqrt{t} \norm{L_1^{x/\sqrt{t}}}$. Thus, through a change
  of variables in the integral~\eqref{eq:75}, we can and do assume without loss
  of generality that $t=1$. Furthermore, if $B$ denotes the 
  underlying Brownian motion, we see that $\norm{L_t^x}_p = \norm{L_t^{-x}}_p$,
  as $-B$ has the same law as $B$. Thus, 
  \begin{equation*}
    \int_{-\infty}^{\infty} \norm{L_1^x}_p^a \diff{x}
    =
    2
    \int_0^{\infty} \norm{L_1^x}_p^a \diff{x}.
  \end{equation*}
  In~\cite{takacs_brownian_1995}, it is proved that 
  \begin{equation*}
    \norm{L_1^x}_p = \frac{2}{\sqrt{2 \pi}} \left(
      x^{p+1}
      \int_1^{\infty} \me^{-x^2y^2/2} (y-1)^p \diff{y}
    \right)^{1/p}
  \end{equation*}
  for $x >0$, which implies that there exist positive constants $\alpha$, $\beta$
  and $x_0 > 1$, such that 
  \begin{equation*}
    \norm{L_t^{x}}_p^a \leq \alpha \me^{-\beta x^2}
  \end{equation*}
  for all $x > x_0$.
  Consequently,
  \begin{equation*}
    \int_0^{\infty} \norm{L_1^x}_p^a \diff{x}
  = \int_{0}^{x_0} \norm{L_1^x}_p^a \diff{x} + \int_{x_0}^{\infty}
  \norm{L_1^x}_p^a 
  \diff{x} 
  <\infty.
  \end{equation*}
\end{proof}

\section{The iterated integrals $I_q$, $J_q$ and $K_q$}
\label{s:integrals}

In this section, we define and study three types of iterated integrals, two of
them, namely~$I_q$ and~$J_q$, stochastic, the third one deterministic with a
random kernel. As already indicated in the introduction, these integrals will
appear later through the Kailath-Segall formula.

\begin{definition}
  Let  $(M_u)_{u \geq -\infty} \in H^2_{0,\text{loc}}$ and $(Y(u))_{u \geq -\infty}
\in L^2_{\text{loc}}(M)$.
For $q \geq 0$, and $-\infty \leq x_1 < x_2$, the iterated 
integrals $I_q$, $J_q$ and $K_q$ are defined as follows.
\begin{align}
  \label{inti}
  I_0(x_1,x_2) &= 1, &I_{q+1}(x_1,x_2) &=  \int_{x_1}^{x_2} I_q(u) \diff{M_u},
  \\
  \label{intj}
  J_0(Y,x_1,x_2) &= Y(x_2),
&
  J_{q+1}(Y,x_1,x_2) &=  \int_{x_1}^{x_2} J_{q}(Y,x_1,x_2) \diff{M_u},
  \\
  \label{intk}
    K_0(Y,x_1,x_2) &= Y(x_2), &
  K_{q+1}(Y,x_1,x_2) &= \int_{x_1}^{x_2} K_q(Y,x_1,u) \diff{\left\langle M,M
                       \right\rangle_u}. 
\end{align}
\end{definition}
If $x_1=-\infty$, we drop it from the arguments, so that
$I_q(-\infty,x_2)$ becomes $I_q(x_2)$, $J_q(Y,-\infty,x_2)$ becomes $J_q(Yd
,x_2)$
and $K_q(Y,-\infty,x_2)$ becomes $K_q(Y,x_2)$.

As $I_q(x_1,x_2) = J_q(1,x_1,x_2)$, the definition of $I_q$ is
redundant. We have included it to improve the readability of 
subsequent results. Observe that for any integer $q \geq 1$, the integrals $I_q$
and 
$J_q$ are elements of $H^2_{0,\text{loc}}$. The following is a consequence of
the main findings in~\cite{segall_orthogonal_1976}.  

\begin{proposition}
  \label{prop:kailath-segall}
  For $x_1 \geq -\infty$, define the martingale $(M_{x_2})_{x_2 \geq x_1}$ by
  $M_{x_2} = I_1(x_1,x_2)$. Then, for  $q \geq 2$, the
  \emph{Kailath-Segall   identity}
  \begin{equation}
    \label{eq:1}
    q I_q(x_1,x_2) = I_{q-1}(x_1,x_2) M_{x_2} - I_{q-2}(x_1,x_2) \left\langle
      M,M \right\rangle_{x_2}
  \end{equation}
  holds. Furthermore, we have for $q \geq 1$ that
  \begin{equation}
    \label{eq:2}
    q! I_q(x_{1},x_2) =
    \widetilde{H}_q(M_{x_2},\left\langle M,M \right\rangle_{x_2})
    =
    \sum_{k=0}^{\lfloor \frac{q}{2} \rfloor} a_{q,k}
    M^{q-2k}_{x_2} 
    \left\langle M,M \right\rangle^k_{x_2}.
  \end{equation}
  Here, $\widetilde{H}_{n}(x,a) = a^{n/2} H_n(x/\sqrt{a})$, where $H_n(x)=
  \me^{\frac{x^2}{2}} \frac{\partial^n}{\partial x^n} \me^{-\frac{x^2}{2}}$
  denotes the $n$th Hermite polynomial and the constants $a_{q,k}$ are the
  Bessel of numbers of the second kind, given by
  \begin{equation}
    \label{eq:3}
    a_{q,k} = \frac{(-1)^k q!}{2^k k! (q-2k)!}.
  \end{equation}
\end{proposition}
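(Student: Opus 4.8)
The plan is to prove the recursion \eqref{eq:1} by induction on $q$ using It\^o's integration-by-parts formula, and then to obtain the closed form \eqref{eq:2} from \eqref{eq:1} together with the three-term recurrence for Hermite polynomials. Throughout I keep the notation of the statement, writing $M_{x_2}=I_1(x_1,x_2)$ and $A_{x_2}=\qvar{M}{M}_{x_2}$ for its bracket; all stochastic integrals below are understood against this $M$ (equivalently, against the original integrator, which has the same increments on $[x_1,\cdot]$). Recall that $M_{x_1}=0$, $A_{x_1}=0$, $I_j(x_1,x_1)=0$ for $j\ge1$, that by definition $I_{j}(x_1,\cdot)$ has differential $I_{j-1}(x_1,\cdot)\,\diff{M}$, and consequently $\qvar{I_j(x_1,\cdot)}{M}_{x_2}=\int_{x_1}^{x_2}I_{j-1}(x_1,u)\,\diff{A_u}$. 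Since each $I_j(x_1,\cdot)$ lies in $H^2_{0,\text{loc}}$ and $A$ is continuous and nondecreasing, all integrands appearing below lie in the relevant localized spaces, so the stochastic calculus is licit under the conventions fixed in Section~\ref{s:prelim}.

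For the base case $q=2$, It\^o's formula gives $I_2(x_1,x_2)=\int_{x_1}^{x_2}M_u\,\diff{M_u}=\tfrac12(M_{x_2}^2-A_{x_2})$, which is \eqref{eq:1}. For the inductive step, suppose \eqref{eq:1} holds at order $q$. Integration by parts for the product $I_q(x_1,\cdot)\,M$ gives
\begin{equation*}
  I_q(x_1,x_2)\,M_{x_2} = I_{q+1}(x_1,x_2) + \int_{x_1}^{x_2} M_u\,I_{q-1}(x_1,u)\,\diff{M_u} + \int_{x_1}^{x_2} I_{q-1}(x_1,u)\,\diff{A_u}.
\end{equation*}
In the middle integral I substitute the induction hypothesis rewritten as $M_u\,I_{q-1}(x_1,u)=q\,I_q(x_1,u)+I_{q-2}(x_1,u)\,A_u$, which turns it into $q\,I_{q+1}(x_1,x_2)+\int_{x_1}^{x_2}I_{q-2}(x_1,u)\,A_u\,\diff{M_u}$; in the last integral I integrate by parts for $I_{q-1}(x_1,\cdot)\,A$ (no covariation term, $A$ having finite variation), obtaining $I_{q-1}(x_1,x_2)\,A_{x_2}-\int_{x_1}^{x_2}I_{q-2}(x_1,u)\,A_u\,\diff{M_u}$. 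The two $\diff{M}$-integrals cancel and we are left with $I_q(x_1,x_2)\,M_{x_2}=(q+1)\,I_{q+1}(x_1,x_2)+I_{q-1}(x_1,x_2)\,A_{x_2}$, which is \eqref{eq:1} at order $q+1$.

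For \eqref{eq:2}, recall that the probabilists' Hermite polynomials satisfy $H_{n+1}(x)=xH_n(x)-nH_{n-1}(x)$, hence $\widetilde H_{n+1}(x,a)=x\,\widetilde H_n(x,a)-n\,a\,\widetilde H_{n-1}(x,a)$ for the normalized versions. The identity $q!\,I_q(x_1,x_2)=\widetilde H_q(M_{x_2},A_{x_2})$ then follows by induction: it is clear for $q=0$ and $q=1$ (as $\widetilde H_0\equiv1$ and $\widetilde H_1(x,a)=x$), and multiplying \eqref{eq:1} by $q!$ gives $(q+1)!\,I_{q+1}=M_{x_2}\widetilde H_q(M_{x_2},A_{x_2})-q\,A_{x_2}\widetilde H_{q-1}(M_{x_2},A_{x_2})=\widetilde H_{q+1}(M_{x_2},A_{x_2})$. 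The last equality in \eqref{eq:2} is the classical closed form for the coefficients of $H_q$, namely $\widetilde H_q(x,a)=\sum_{k=0}^{\lfloor q/2\rfloor}a_{q,k}\,x^{q-2k}a^k$ with $a_{q,k}$ as in \eqref{eq:3}, which one verifies directly from the same recurrence.

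The computation is routine; the only point deserving care is that we work with space-indexed martingales on $[-\infty,\infty)$ vanishing at $-\infty$ and with integration started at a finite left endpoint $x_1$, so one must check that $M_\cdot\,I_{q-1}(x_1,\cdot)$ and $I_{q-2}(x_1,\cdot)\,A_\cdot$ lie in $L^2_{\text{loc}}(M)$ and that the product rule applies in this setting. Under the conventions recalled in Section~\ref{s:prelim} this is immediate, and alternatively, once the statement has been recast in this form, one may simply invoke~\cite{segall_orthogonal_1976}.
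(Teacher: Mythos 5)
Your argument is correct, but it runs in the opposite logical direction from the paper's. You establish the recursion \eqref{eq:1} first, by induction on $q$: integration by parts for $I_q(x_1,\cdot)M$, substitution of the induction hypothesis in the resulting $\diff{M}$-integral, and a second integration by parts for $I_{q-1}(x_1,\cdot)\qvar{M}{M}$ that cancels the leftover stochastic integral — a clean computation, and the integrability caveat you flag is indeed harmless since all integrands are continuous and adapted, hence locally bounded. You then read off \eqref{eq:2} from the three-term Hermite recurrence. The paper instead proves \eqref{eq:2} first: it checks that $\widetilde H_n$ satisfies $\bigl(\tfrac12\partial_{xx}+\partial_a\bigr)\widetilde H_n=0$ and $\partial_x\widetilde H_n=n\widetilde H_{n-1}$, so that $\widetilde H_n(M_{x_2},\qvar{M}{M}_{x_2})$ is a local martingale equal to $n\int_{x_1}^{x_2}\widetilde H_{n-1}(M_u,\qvar{M}{M}_u)\,\diff{M_u}$, which identifies $q!\,I_q$ with $\widetilde H_q(M,\qvar{M}{M})$ by induction; \eqref{eq:1} is then an immediate consequence of the Hermite recurrence. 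The paper's route outsources the Itô computation to the single standard fact that space-time harmonic functions of $(M,\qvar{M}{M})$ are local martingales, while yours keeps the stochastic calculus explicit and makes the Kailath--Segall identity self-contained, independent of any Hermite input; both are standard and equally valid here. (One incidental point: the paper's stated definition $H_n(x)=\me^{x^2/2}\frac{\partial^n}{\partial x^n}\me^{-x^2/2}$ is missing the factor $(-1)^n$ needed for the recurrence $H_n=xH_{n-1}-(n-1)H_{n-2}$ and the coefficients $a_{q,k}$ to come out as stated; your recurrence $H_{n+1}=xH_n-nH_{n-1}$ is the correct one for the intended probabilists' Hermite polynomials, so this does not affect your argument.)
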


\begin{proof}
   The identity $H_n(x)'=nH_{n-1}(x)$ and the 
  recursion formula
  \begin{equation*}
    H_{n}(x) = x H_{n-1}(x) - H_{n-1}(x)' = xH_{n-1}(x) -
  (n-1) H_{n-2}(x),
  \end{equation*}
  which are both well-known (see for
  example~\cite[p. 106]{szegho_orthogonal_1975}) and follow inductively from the
  definition, imply that
  \begin{equation}
    \label{eq:4}
    \widetilde{H}_n(x,a) = x \widetilde{H}_{n-1}(x,a) - (n-1) a
    \widetilde{H}_{n-2}(x,a). 
  \end{equation}
  and also that
  \begin{equation*}
    \left( \frac{1}{2}
    \frac{\partial^2}{\partial x^2} + \frac{\partial}{\partial t } \right)
  \widetilde{H}_n(x,a) = 0 \qquad \text{and} \qquad \frac{\partial}{\partial x}
  \widetilde{H}_n = n 
  \widetilde{H}_{n-1}.
  \end{equation*}
Thus (see for example~\cite[ch. IV, prop. 3.8]{revuz_continuous_1999}),
$\widetilde{H}_n(M_x,\left\langle M,M \right\rangle_x)$ is a martingale and
satisfies the recursion
  \begin{equation*}
    \widetilde{H}_n(M_x,\left\langle M,M \right\rangle_x) = n \int_{-\infty}^x
    \widetilde{H}_{n-1}(M_u,\left\langle M,M \right\rangle_u) \diff{M_u},
  \end{equation*}
  which inductively implies that
  \begin{equation}
    \label{eq:5}
    q! I_q(x_1,x_2) = \widetilde{H}_q(M_{x_2}, \left\langle M,M \right\rangle_{x_2}). 
  \end{equation}
  Together with~\eqref{eq:4}, the Kailath-Segall
  formula~\eqref{eq:1} follows. Identity~\eqref{eq:2} is a consequence
  of~\eqref{eq:5} as well, together with the well-known formula
  (see for example~\cite[p. 106]{szegho_orthogonal_1975})
  \begin{equation*}
    H_q(x) = \sum_{k=0}^{\lfloor \frac{q}{2} \rfloor} a_{q,k} x^{q-2k},
  \end{equation*}
  which translates into
  \begin{equation*}
    \widetilde{H}_q(x,a) = \sum_{k=0}^{\lfloor \frac{q}{2} \rfloor} a_{q,k} a^k
    x^{q-2k}. 
  \end{equation*}
\end{proof}

\begin{remark}\hfill
  \begin{enumerate}[(i)]
  \item     As indicated in the proof, the Bessel numbers $a_{q,k}$ are the
    coefficients 
    of the Hermite polynomials. The first few values of $a_{q,k}$ for $q,k \geq
    1$ (note that $a_{q,0}=1$) are given in the table below.
    \begin{center}
      \begin{tabular}{lrrrr}
        \toprule
        $q \textbackslash k$ &$1$ &$2$ &$3$ &$4$ \\
        \midrule
        $1$  &&&& \\
        $2$  &$1$ &&& \\
        $3$  &$3$ &&& \\
        $4$  &$6$ &$3$ &&\\
        $5$  &$10$ &$15$ &&\\
        $6$  &$15$ &$45$ &$15$ &\\
        $7$  &$21$ &$105$ &$105$ &\\
        $8$  &$28$ &$210$ &$420$ &$105$ \\
        $9$  &$36$ &$378$ &$1260$ &$945$ \\
        \bottomrule
      \end{tabular}
    \end{center}
    The third row for example translates into
    \begin{equation*}
      3! I_3 = I_1^3 - 3 I_1 \left\langle M \right\rangle
    \end{equation*}
    and row eigth shows that
    \begin{equation*}
      8! I_8 = I_1^8 - 28 I_1^6 \left\langle M \right\rangle + 210 I_1^4
      \left\langle M \right\rangle^2 - 420 I_1^2 \left\langle M
      \right\rangle^3 + 105 \left\langle M \right\rangle^4.
    \end{equation*}
  \item The Kailath-Segall identity~\eqref{eq:1} continues to hold verbatim for
    continuous semimartingales. For general semimartingales it involves
    higher-order variations (see~\cite{segall_orthogonal_1976}).
  \end{enumerate}
\end{remark}

The following lemma gives an analogue of the Burkholder-Davis-Gundy inequality
for the iterated integrals.

\begin{lemma}[\cite{carlen_$l^p$_1991}]
  \label{lem:2}
  Let $(M_x)_{x \geq -\infty} \in
  H^2_{0,\text{loc}}$. Then, for the iterated integrals $I_q$ defined with
  respect to $M$, we have that
  \begin{equation*}
    A_{p,q} \norm{\left\langle M,M \right\rangle_x^{1/2}}_{pq}^q
    \leq
    \norm{I_q(x)}_p
    \leq
    B_{p,q} \norm{\left\langle M,M \right\rangle_x^{1/2}}_{pq}^q,
  \end{equation*}
  where the left hand side holds for $p>1$, the right hand side for $p \geq 1$
  and $A_{p,q}$, $B_{p,q}$ denote positive constants depending on $p$ and $q$.
\end{lemma}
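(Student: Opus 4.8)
The plan is to reduce the statement to the classical Burkholder--Davis--Gundy and Doob inequalities by an induction on $q$, exploiting the recursive structure $I_{q+1}(x) = \int_{-\infty}^x I_q(u)\diff{M_u}$ together with the Kailath--Segall representation~\eqref{eq:2}. The base case $q=0$ is trivial ($I_0\equiv 1$) and $q=1$ is exactly the classical BDG inequality applied to $M$ itself, since $\langle I_1\rangle_x = \langle M\rangle_x$. For the induction step one wants both bounds simultaneously, so I would carry the two inequalities through the recursion in tandem.

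For the \emph{upper} bound: since $I_{q+1}$ is itself a continuous local martingale in $H^2_{0,\mathrm{loc}}$ with $\langle I_{q+1}\rangle_x = \int_{-\infty}^x I_q(u)^2\diff{\langle M\rangle_u}$, apply classical BDG to get $\norm{I_{q+1}(x)}_p \lesssim \norm{(\int_{-\infty}^x I_q(u)^2 \diff{\langle M\rangle_u})^{1/2}}_p$. Bound the quadratic variation integral crudely by $\sup_{u\le x} I_q(u)^2 \cdot \langle M\rangle_x$, so that by Cauchy--Schwarz in the $L^p$-norm (or Hölder with conjugate exponents chosen to land on the exponent $pq$ on the right), $\norm{I_{q+1}(x)}_p \lesssim \norm{I_q^\ast(x)}_{2p(q+1)}\,\norm{\langle M\rangle_x^{1/2}}_{2p(q+1)}$, where $I_q^\ast(x) = \sup_{u\le x}\abs{I_q(u)}$. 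Doob's $L^p$ inequality converts $\norm{I_q^\ast(x)}_{2p(q+1)}$ into $\norm{I_q(x)}_{2p(q+1)}$ up to a constant, and the induction hypothesis (upper bound, with exponent $2p(q+1)$ in place of $pq$) gives $\norm{I_q(x)}_{2p(q+1)} \lesssim \norm{\langle M\rangle_x^{1/2}}_{\,2p(q+1)q}^{q}$. Collecting the two factors of $\langle M\rangle^{1/2}$ and tidying the exponents (at the cost of worsening the constant $B_{p,q}$) yields $\norm{I_{q+1}(x)}_p \lesssim \norm{\langle M\rangle_x^{1/2}}_{p(q+1)}^{q+1}$. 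The same scheme runs for the \emph{lower} bound, now using the lower half of classical BDG (which needs $p>1$) to bound $\norm{I_{q+1}(x)}_p$ from below by $\norm{(\int_{-\infty}^x I_q(u)^2\diff{\langle M\rangle_u})^{1/2}}_p$, and then bounding the quadratic variation integral from below — e.g. by restricting the integration or by a reverse Hölder/interpolation argument — to extract $\langle M\rangle_x^{(q+1)/2}$ with the exponent $p(q+1)$.

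The main obstacle is the bookkeeping of $L^p$-exponents through the recursion: each step forces a larger exponent ($pq \to 2p(q+1)$ or similar), so one must verify that the induction hypothesis is invoked at the \emph{correct} exponent and that after $q$ iterations the exponents telescope back to the claimed $pq$ — this is where the precise form of the norms $\norm{\langle M\rangle_x^{1/2}}_{pq}^q$ matters, and a naive application of Cauchy--Schwarz gives the wrong exponent unless one is careful to use Hölder with exponents tuned to the induction. A cleaner route, and the one I would actually adopt, is to bypass the recursion on $I_q$ directly and instead use the Kailath--Segall identity~\eqref{eq:2}: it expresses $q!\,I_q(x)$ as a polynomial $\sum_k a_{q,k} M_x^{q-2k}\langle M\rangle_x^k$ in $M_x$ and $\langle M\rangle_x$, each term of which has ``weight'' $q$ in the sense of Brownian scaling. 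Then the upper bound follows termwise by Hölder, $\norm{M_x^{q-2k}\langle M\rangle_x^k}_p \le \norm{M_x}_{p(q-2k)\cdot \frac{q}{q-2k}}^{q-2k}\cdot(\dots)$ combined with classical BDG $\norm{M_x}_r \lesssim \norm{\langle M\rangle_x^{1/2}}_r$, all landing on exponent $pq$; and the lower bound follows since the leading term $M_x^q/q!$ dominates in an appropriate sense (the other terms being lower-order in $\langle M\rangle$), again after an application of BDG and Hölder. This is essentially the argument of~\cite{carlen_$l^p$_1991}, to which the statement is attributed, so for the proof I would simply invoke that reference after indicating that the extension to martingales indexed by $[-\infty,\infty)$ vanishing at $-\infty$ is immediate, exactly as for the other textbook results quoted in this section.
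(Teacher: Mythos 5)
The paper itself offers no proof of this lemma: it cites Carlen--Kr\'ee and remarks only that their argument, which rests on the Kailath--Segall identity, was given for martingales indexed by $[0,\infty)$ and carries over unchanged to the present setting of martingales indexed by $[-\infty,\infty)$ and vanishing at $-\infty$. Your proposal lands in exactly the same place --- invoke the reference and note that the extension of the index set is immediate --- and your identification of the Kailath--Segall identity as the engine of the original proof matches the paper's own remark; the only caveat is that your sketched lower-bound arguments (the ``reverse H\"older'' step in the inductive route, and the claim that the leading term $M_x^q/q!$ dominates in the termwise route) are not complete as written and would need the actual Carlen--Kr\'ee argument to be made rigorous, which is acceptable here only because, like the paper, you ultimately defer to that reference.
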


The proof, which was originially given for martingales indexed by $[0,\infty)$
and continues to work in our framework, uses the Kailath-Segall identity. The 
constants $A_{p,q}$ and $B_{p,q}$ can be computed explicitly, decay in $q$ and
are sharp in a certain sense (none of these facts are needed here,
see~\cite{carlen_$l^p$_1991} for details). Unfortunately, the approach can not
be adapted to cover the iterated integrals $J_q$ or $K_q$, but nevertheless, as
the next Lemma shows, we can  derive rather tight upper bounds which suffice for
our purposes. 

\begin{lemma}
  \label{lem:jq}
  Let $J_q$ and $K_q$ be the iterated integrals as defined above with respect to
  the local martingale part $M_x$ of the local time $L_t^x$. Then, for two real
  numbers $x_1 < x_2$, an integrable continuous process $(X_u)_{u \in
    [x_1,x_2]}$, $p \geq 1$ and any integer $q\geq0$, it holds
  that
  \begin{align}
    \label{eq:66}
    \norm{J_q(X,x_1,x_2)}_p
    &\leq
      C_p (x_2-x_1)^{\frac{q}{2}}
    \norm{L_t^{\ast}}_{2^qp}^{q/2}
     \sup_{u \in (x_1,x_2)}
     \norm{X_{u}}_{2^{q}p}
     \\
    \shortintertext{and}
    \label{eq:67}
    \norm{K_q(X,x_1,x_2)}_p
    &\leq
    (x_2-x_1)^q \norm{L_t^{\ast}}_{2^qp}^ q\sup_{u \in (x_1,x_2)}
      \norm{X_u}_{2^qp},
    \end{align}
  where $C_p$ denotes a positive constant and $L_t^{\ast} =
  \sup_{x \in \mathbb{R}} \abs{L_t^x}$. 
\end{lemma}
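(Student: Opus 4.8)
The plan is to prove both bounds by induction on $q$, exploiting the recursive definitions \eqref{intj} and \eqref{intk} together with the Burkholder-Davis-Gundy inequality and the fact, from Theorem~\ref{lem:8} and Corollary~\ref{cor:2}, that the quadratic variation of $M$ satisfies $\langle M, M\rangle_{x_2} - \langle M,M\rangle_{x_1} = 4\int_{x_1}^{x_2} L_t^u\,\diff{u}$, which is crudely bounded by $4(x_2-x_1)L_t^{\ast}$. I would treat the $K_q$ bound \eqref{eq:67} first, since it is the simpler of the two and uses only Minkowski's integral inequality: writing $K_{q+1}(X,x_1,x_2) = \int_{x_1}^{x_2} K_q(X,x_1,u)\,\diff\langle M,M\rangle_u = 4\int_{x_1}^{x_2} K_q(X,x_1,u) L_t^u\,\diff{u}$, one pulls the $L^p$-norm inside the $\diff u$-integral, bounds $L_t^u$ by $L_t^{\ast}$ via Hölder (costing a doubling of the integrability exponent, from $2^q p$ to $2^{q+1}p$), and applies the induction hypothesis on the interval $(x_1,u)\subseteq(x_1,x_2)$; the length factor $(u-x_1)^q$ is bounded by $(x_2-x_1)^q$ and the remaining $\diff u$-integration produces the extra factor $(x_2-x_1)$, giving $(x_2-x_1)^{q+1}$ as required.

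For \eqref{eq:66}, the base case $q=0$ is just $\norm{J_0(X,x_1,x_2)}_p = \norm{X_{x_2}}_p$, which is dominated by the right-hand side. For the inductive step, $J_{q+1}(X,x_1,x_2) = \int_{x_1}^{x_2} J_q(X,x_1,u)\,\diff{M_u}$ is a stochastic integral, so by the $L^p$ Burkholder-Davis-Gundy inequality its $L^p$-norm is controlled by $\norm{\bigl(\int_{x_1}^{x_2} J_q(X,x_1,u)^2\,\diff\langle M,M\rangle_u\bigr)^{1/2}}_p$. Here I would again use $\diff\langle M,M\rangle_u = 4L_t^u\,\diff u \leq 4L_t^{\ast}\,\diff u$, factor out $(4L_t^{\ast})^{1/2}$ and the length $(x_2-x_1)^{1/2}$, and bound $\sup_{u\in(x_1,x_2)}\abs{J_q(X,x_1,u)}$ — or more carefully, move the $L^p$ norm inside via Minkowski and use $\sup_u\norm{J_q(X,x_1,u)}_{p'}$ with a suitably inflated exponent $p'$. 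Applying the induction hypothesis to $J_q$ on subintervals $(x_1,u)$, the length factor becomes $(x_2-x_1)^{q/2}$, the $L_t^{\ast}$ power accumulates to $L_t^{\ast,(q+1)/2}$, the $\sup_u\norm{X_u}$ exponent doubles from $2^q p$ to $2^{q+1}p$, and the constant updates to $C_{p}\cdot(\text{BDG constant})$, yielding \eqref{eq:66} for $q+1$.

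The main technical obstacle is the bookkeeping of the $L^p$-exponents through the induction: each stochastic-integral step forces a square root and hence, via Hölder applied to the product of $L_t^{\ast}$ with the inner iterated integral, a doubling of the integrability index, which is exactly why $2^q p$ appears in the statement. One must be careful that $L_t^{\ast} \in L^r$ for every $r$ (guaranteed by Theorem~\ref{lem:8}) so that all these moments are finite, and that the constants $C_p$ can be chosen uniformly in $q$ or at least tracked — but since the statement only claims existence of \emph{some} positive constant, it suffices to absorb the BDG constants at each step; a clean way is to note the BDG constant for the $p$-th moment depends only on $p$, and the doubling of exponents does not feed back into the constant, so $C_p$ can be taken as a fixed power of the $L^p$-BDG constant times elementary factors. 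A secondary subtlety is justifying the use of BDG and the stochastic Fubini-type manipulations in the $[-\infty,\infty)$-indexed setting, but as noted in Section~\ref{s:prelim} this is routine because all processes vanish at $-\infty$ and here we work on the bounded interval $[x_1,x_2]$ anyway.
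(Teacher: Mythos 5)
Your plan is correct and follows essentially the same route as the paper: induction on $q$, Burkholder--Davis--Gundy for the stochastic integral defining $J_{q+1}$, the identity $\diff{\left\langle M,M\right\rangle_u}=4L_t^u\diff{u}$, and a Cauchy--Schwarz/H\"older splitting of $L_t$ from the inner iterated integral, which is exactly what produces the doubling $p\mapsto 2p$ and hence the exponent $2^qp$. The only cosmetic difference is that the paper moves the $L^p$-norm inside the $\diff{u}$-integral via Jensen's inequality (picking up the factor $(x_2-x_1)^{1/2-1/p}$) rather than via Minkowski's integral inequality, and your self-correction to use $\sup_u\norm{J_q(X,x_1,u)}_{p'}$ instead of a pathwise supremum is the right call.
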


\begin{proof}
  We assume throughout the proof that $p \geq 2$. The general case follows a
  posteriori as the~$L^p$-norms with respect to a finite measure are increasing
  in $p$. Furthermore, the constant $C_p$ might change 
  from line to line. For $q=0$, both inequalities are trivially satisfied.
  Inductively, for $q \geq 1$, Burkholder-Davis-Gundy yields that
  \begin{multline*}
    \norm{J_q(X,x_1,x_2)}_p
     =
      \norm{
         \int_{x_1}^{x_2} J_{q-1}(X,x_1,u)
         \diff{M_u}
      }_p
       \\
    \leq
         C_p
         \norm{
    \left(\int_{x_1}^{x_2} J_{q-1}(X,x_1,u)^2
         L_t^u \diff{u} \right)^{1/2}
         }_p.
  \end{multline*}
  By Jensen's inequality, the above is less than or equal to
  \begin{equation*}
         C_p
         (x_2-x_1)^{1/2-1/p}
         \left(
         \int_{x_1}^{x_2}
         \E \left[
         \abs{J_{q-1}(X,x_1,u)}^p
         (L_t^u)^{p/2} \right] \diff{u}
         \right)^{1/p}
  \end{equation*}
  and by Cauchy-Schwarz and the induction hypothesis, this can further be
  bounded by 
  \begin{multline*}
         C_p
         (x_2-x_1)^{1/2-1/p}
         \left(
         \int_{x_1}^{x_2}
         \norm{J_{q-1}(X,x_1,u)}_{2p}^{p}
         \norm{L_t^u}_p^{p/2}
         \diff{u}
         \right)^{1/p}
        \\ \leq
         C_p
         (x_2-x_1)^{q-2}
         \norm{L_t^{\ast}}_p^{1/2}
         \norm{L_t^{\ast}}_{2^qp}^{(q-1)/2}
         \sup_{u \in (x_1,x_2)} \norm{X_u}_{2^qp}.
  \end{multline*}
  Together with the monotonicity of the
  $L^p$ norms with respect to a finite measure, this proves
  inequality~\eqref{eq:66}. 
  To show~\eqref{eq:67}, apply Jensen's inequality to
  obtain 
  \begin{multline*}
    \norm{K_q(X,x_1,x_2)}_p
    =
         \norm{\int_{x_1}^{x_2} K_{q-1}(X,x_1,u) L_t^u \diff{u}}_p
    \\ \leq
         (x_2-x_1)^{1-\frac{1}{p}}
         \left(
         \int_{x_1}^{x_2} \E \left[
         \abs{K_{q-1}(X,x_1,u) L_t^u}^p 
       \right]
              \diff{u}
       \right)^{1/p}
  \end{multline*}
  and then again use Cauchy-Schwarz and the induction hypothesis.
\end{proof}

We now turn to the proof of two crucial technical Lemmas.

\begin{lemma}
  \label{lem:3}
  For an integer $q \geq 2$, let $J_{q-1}$ be the iterated integral defined
  with respect to the local martingale  part $M_x$ of the local time $L_t^x$. Furthermore,
  let $(Y_x)_{x     \in \R}$ be a uniformly bounded continuous stochastic
  process adapted to $(L_t^x)_{x \in 
    \R}$ and define $X_u(v) = \int_u^v Y_x \diff{x}$. Then, for any
   $x_0 \in  \R$ it holds   that
  \begin{equation}
    \label{eq:13}
    \E \left[ 
    \sup_{x \in (-\infty,x_0)}
    \frac{1}{h^{(q+1)/2}}
    \abs{
      \int_{-\infty}^x J_{q-1}(X_{u-h},u-h,u) L_t^u \diff{u}}
 \right]
\to 0.
  \end{equation}
\end{lemma}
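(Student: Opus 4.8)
\emph{Strategy.} The plan is to turn the $u$-integral into a stochastic integral in the outer variable~$x$, thereby exposing a cancellation that the crude bound $\sup_{x<x_0}\abs{\int_{-\infty}^x(\,\cdot\,)\,\diff{u}}\le\int_{-\infty}^{x_0}\abs{(\,\cdot\,)}\,\diff{u}$ misses (that bound, together with Lemma~\ref{lem:2}, only shows that the left-hand side of~\eqref{eq:13} is $\mathcal{O}(1)$). First I would put $J_{q-1}$ in closed form: pulling the seed $\int_{u-h}^{v_1}Y_w\,\diff{w}$ of $J_{q-1}(X_{u-h},u-h,u)$ to the outside one level at a time via the Stochastic Fubini Theorem gives
\[
  J_{q-1}(X_{u-h},u-h,u)=\int_{u-h}^u Y_w\,I_{q-1}(w,u)\,\diff{w},
\]
where $I_{q-1}(w,u)$ is the $(q-1)$-fold iterated integral of $M$ over $[w,u]$, so that $(q-1)!\,I_{q-1}(w,u)=\widetilde{H}_{q-1}(M_u-M_w,\langle M\rangle_u-\langle M\rangle_w)$ by Proposition~\ref{prop:kailath-segall}. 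Combining $\langle M\rangle_u-\langle M\rangle_w=4\int_w^u L_t^v\,\diff{v}\le 4\abs{u-w}L_t^{\ast}$, $\norm{L_t^{\ast}}_r<\infty$ (Theorem~\ref{lem:8}) and Lemma~\ref{lem:2}, one obtains $\norm{I_j(w,u)}_p\le C_{p,j}\abs{u-w}^{j/2}$ for $j\in\{q-2,q-1\}$ and all $p$; in particular $\norm{J_{q-1}(X_{u-h},u-h,u)}_p=\mathcal{O}(h^{(q+1)/2})$, so a naive estimate indeed stalls at $\mathcal{O}(1)$.

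\emph{Reduction.} Substituting $w=u-s$ and applying the ordinary Fubini theorem,
\[
  \int_{-\infty}^x J_{q-1}(X_{u-h},u-h,u)\,L_t^u\,\diff{u}=\int_0^h G_s(x)\,\diff{s},\qquad
  G_s(x):=\int_{-\infty}^x L_t^u\,Y_{u-s}\,I_{q-1}(u-s,u)\,\diff{u},
\]
so it is enough to produce a constant $C$ and a function $\varepsilon(s)\to0$ with $\Ex{\sup_{x<x_0}\abs{G_s(x)}}\le C\,s^{q/2}+\varepsilon(s)\,s^{(q-1)/2}$, since then $h^{-(q+1)/2}\int_0^h\!\bigl(C s^{q/2}+\varepsilon(s)\,s^{(q-1)/2}\bigr)\,\diff{s}\to 0$ as $h\to0$. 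Because the local-time weight $L_t^u$ anticipates the inner $M$-increments, I would split $L_t^u=L_t^{u-s}+(L_t^u-L_t^{u-s})$ and write $G_s=G_s^{a}+G_s^{b}$ accordingly. The integrand of $G_s^{b}$ is compactly supported, so by Cauchy--Schwarz and the $I_{q-1}$-bound above,
\[
  \Ex{\sup_{x<x_0}\abs{G_s^{b}(x)}}\le\norm{Y}_\infty\!\int_{-\infty}^{\infty}\!\norm{L_t^u-L_t^{u-s}}_2\,\norm{I_{q-1}(u-s,u)}_2\,\diff{u}\le C\,s^{(q-1)/2}\!\int_{-\infty}^{\infty}\!\norm{L_t^u-L_t^{u-s}}_2\,\diff{u},
\]
and the last integral is an admissible $\varepsilon(s)\to 0$ by the $L^2$-continuity of $x\mapsto L_t^x$, dominated convergence and Lemma~\ref{lem:5}.

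\emph{The main term.} In $G_s^{a}(x)=\int_{-\infty}^x L_t^{u-s}\,Y_{u-s}\,I_{q-1}(u-s,u)\,\diff{u}$ the weight $L_t^{u-s}Y_{u-s}$ is $\mathcal{G}_{u-s}$-measurable. Expanding $I_{q-1}(u-s,u)=\int_{u-s}^u I_{q-2}(u-s,v)\,\diff{M_v}$ and interchanging the integrations by the Stochastic Fubini Theorem --- now legitimate, since on $\{v\le u\le\min(v+s,x)\}$ one has $v\ge u-s$, so the kernel $L_t^{u-s}Y_{u-s}I_{q-2}(u-s,v)$ is $\mathcal{G}_v$-measurable --- I would rewrite
\[
  G_s^{a}(x)=\int_{-\infty}^x\Bigl(\int_v^{\min(v+s,\,x)}L_t^{u-s}\,Y_{u-s}\,I_{q-2}(u-s,v)\,\diff{u}\Bigr)\,\diff{M_v}.
\]
On the bulk $v\le x-s$ the inner integral $\Psi_s(v)$ is independent of $x$, is $\mathcal{G}_v$-measurable, and, by the window estimate $\abs{v-(u-s)}\le s$ together with $\norm{Y}_\infty<\infty$, satisfies $\norm{\Psi_s(v)}_3\le C\,s^{q/2}$, one power of $s$ better than the naive $s^{(q-1)/2}$. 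Since $y\mapsto\int_{-\infty}^y\Psi_s(v)\,\diff{M_v}$ is a martingale, Doob's $L^2$-inequality, Burkholder--Davis--Gundy, $\diff{\langle M\rangle_v}=4L_t^v\,\diff{v}$, H\"older and Lemma~\ref{lem:5} give $\Ex{\sup_{x<x_0}\abs{\int_{-\infty}^{x-s}\Psi_s(v)\,\diff{M_v}}^{2}}\le 4\,\Ex{\int_{-\infty}^{x_0}\Psi_s(v)^{2}\,\diff{\langle M\rangle_v}}\le C\,s^{q}$. The boundary layer $v\in(x-s,x]$ reduces, after splitting off a martingale increment (handled by Doob, $\mathcal{O}(s^{q/2})$) and one further interchange, to $\int_{x-s}^x L_t^{w}\,Y_{w}\,I_{q-1}(w,x)\,\diff{w}$, which has $L^2$-norm $\mathcal{O}(s^{(q+1)/2})$ for each fixed $x$; its supremum over $x$ is controlled by a continuity/chaining argument using that it vanishes off the compact range of the path, and contributes $\mathcal{O}(s^{q/2})$. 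Together with the bound on $G_s^{b}$ this gives $\Ex{\sup_{x<x_0}\abs{G_s(x)}}\le C\,s^{q/2}+\varepsilon(s)\,s^{(q-1)/2}$, as required.

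\emph{Expected obstacle.} The two delicate points, where I expect the real work to lie, are: (i) the local-time weight $L_t^u$ being anticipating with respect to the inner $M$-increments, which makes the naive Fubini interchange illegal and forces the decomposition $L_t^u=L_t^{u-s}+(L_t^u-L_t^{u-s})$, with the error term absorbed through the space-regularity of local time and Lemma~\ref{lem:5}; and (ii) the $x$-dependent upper limit $\min(v+s,x)$, which destroys the exact martingale structure of $G_s^{a}(\cdot)$ and necessitates the separate boundary-layer analysis together with its uniform-in-$x$ control. Verifying the integrability hypothesis~\eqref{eq:16} of the Stochastic Fubini Theorem on the moving domains, and the two clean martingale estimates above, should then be routine given the integrability $\int_{-\infty}^{x_0}\norm{L_t^v}_3\,\diff{v}<\infty$ supplied by Lemma~\ref{lem:5}.
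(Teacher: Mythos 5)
Your strategy is sound and, at its core, rests on the same two pillars as the paper's proof: the decomposition of the local-time weight into an adapted part plus an increment ($L_t^u=L_t^{u-s}+(L_t^u-L_t^{u-s})$ versus the paper's $L_t^u=L_t^{u-h}+(L_t^u-L_t^{u-h})$), with the adapted part turned into a genuine martingale in $x$ via stochastic Fubini and controlled by Doob/Burkholder--Davis--Gundy, and the increment part killed by Cauchy--Schwarz together with the small-scale regularity of $x\mapsto L_t^x$. The organizational differences are real and mostly to your advantage. First, closing the form $J_{q-1}(X_{u-h},u-h,u)=\int_{u-h}^u Y_w I_{q-1}(w,u)\diff{w}$ and disintegrating over the window width $s$ reduces everything to a one-parameter family $G_s$ and makes the order counting ($s^{q/2}$ for the martingale part, $\varepsilon(s)s^{(q-1)/2}$ for the increment part) transparent. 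Second, your treatment of the increment term --- pointwise Cauchy--Schwarz in $\omega$ followed by $\int\norm{L_t^u-L_t^{u-s}}_2\diff{u}\to0$ --- is genuinely simpler than the paper's, which applies a global Cauchy--Schwarz, invokes the Marcus--Rosen $L^2$-modulus result for $\frac1h\int\abs{L_t^u-L_t^{u-h}}^2\diff{u}$, and then needs a full It\^o-formula expansion of $J_{q-1}^2$ to get the first factor down to order $h^{(q+1)/2}$; your route avoids both of those ingredients (you do need to justify the domination in $\int\norm{L_t^u-L_t^{u-s}}_2\diff{u}\to0$ uniformly in $s\le 1$, but the Gaussian decay of $\norm{L_1^x}_p$ used in Lemma 2.5 supplies an integrable envelope).

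The one place where your argument is not actually carried out is the boundary layer $v\in(x-s,x]$ created by the $x$-dependent upper limit $\min(v+s,x)$ after the Fubini interchange: you assert that its supremum over $x$ is $\mathcal{O}(s^{q/2})$ "by a continuity/chaining argument" without giving it. This is the genuinely delicate point of the whole lemma, and as stated it is a gap --- each fixed $x$ gives only $\norm{B_s(x)}_2=\mathcal{O}(s^{(q+1)/2})$, and passing to the supremum must not cost more than $s^{-1/2}$. It is, however, fixable along the lines you indicate: a grid of mesh $s$ covering the (compact, up to an integrable tail) support, an $L^p$ union bound with $p>2$ using the uniform moment bounds $\norm{\Phi_{s,x}(v)}_p\le C_ps^{q/2}$, and control of the oscillation between grid points through the Lebesgue-integral representation $G_s^a(x)-G_s^a(x')=\int_{x'}^{x}L_t^{u-s}Y_{u-s}I_{q-1}(u-s,u)\diff{u}$. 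It is worth noting that the paper's own proof does not treat this point more rigorously: there the $x$-dependence is absorbed into the seed $\widetilde{X}_{u_2-h,x-h}$ of a $J_{q-2}$-integral and Burkholder--Davis--Gundy is then applied to $\sup_{x\le x_0}$ of a stochastic integral whose integrand still depends on $x$, which is the same issue in disguise. So: correct strategy, a cleaner increment estimate, but the boundary-layer chaining must be written out before this counts as a complete proof.
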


\begin{proof}
    Throughout the proof, $C$ and $C_p$ denote positive constants, the latter
    depending on $p$, which might change from line to line.
  We have that
  \begin{align}
    \notag
    \MoveEqLeft \frac{1}{h^{(q+1)/2}}
      \int_{-\infty}^x J_q(X_{u-h},u-h,u) L_t^u \diff{u}
      \\ & \label{eq:36}
           \begin{multlined}[c][.85\columnwidth]
             =
             \frac{1}{h^{(q+1)/2}}
                \int_{-\infty}^x J_{q-1}(X_{u-h},u-h,u) L_t^{u-h} \diff{u}
              \\ + \frac{1}{h^{(q+1)/2}}
             \int_{-\infty}^x J_{q-1}(X_{u-h},u-h,u) \left(L_t^u -
               L_t^{u-h}
             \right) \diff{u}.
           \end{multlined}
  \end{align}
  In two separate steps, we show that the supremum of each of the two integrals
  in the sum~\eqref{eq:36} converges to zero in~$L^1$.
  \\
  \textit{Step 1.}
    An iterated application of the stochastic Fubini theorem, justified by
    Lemma~\ref{lem:jq}, yields that
    \begin{multline}
      \label{eq:68}
      \int_{-\infty}^{x} J_{q-1}(X_{u_1-h},u_1-h,u_1) L_t^{u_1-h}
                               \diff{u_1} 
     \\ =
           \int_{-\infty}^x J_{q-2}(\widetilde{X}_{u_2-h,x-h},u_2-h,u_2)
           \diff{M_{u_2}}, 
    \end{multline}
    where
    \begin{align*}
      \widetilde{X}_{u_2-h,x-h}(u_{q}) = \int_{u_2-h}^{u_{q} \land (x-h)}
      \int_{u_1}^{u_{q}} Y_{u_{q+1}} \diff{u_{q+1}} L_t^{u_1}
      \diff{u_1}.
    \end{align*}
    Note that for $u_q \in (u_2-h,u_2)$ it holds that
    \begin{equation*}
       \abs{\widetilde{X}_{u_2-h,x-h}(u_q)} \leq C h^2 
    \end{equation*}
    and thus, by Lemma~\ref{lem:jq},
    \begin{equation}
      \label{eq:69}
      \norm{ J_{q}(\widetilde{X}_{u_2-h,x-h},u_2-h,u_2)}_p
      \leq
      C_p h^{q/2 + 2} \norm{L_t^{\ast}}_{2^qp}^{q/2}
    \end{equation}
    Abbreviating  $\widetilde{X}_{u_2-h,x-h}$ by $\widetilde{X}$, we use
    identity~\eqref{eq:68}, Burkholder-Davis-Gundy and the deterministic 
    Fubini theorem to obtain
    \begin{align}
      \notag
      \MoveEqLeft[6]
      \norm{
      \sup_{x \leq x_0}
      \abs{
      \int_{-\infty}^x J_{q-1}(X_{u_1-h},u_1-h,u_1) L_t^{u_1-h}
      \diff{u_1}       
        }}_2
      \\ &=           \norm{
           \sup_{x \leq x_0}
           \abs{
           \int_{-\infty}^x
           J_{q-2}(\widetilde{X},u_2-h,u_2) \diff{M_{u_2}}
           }
           }_2
      \\ &\leq C \notag
           \norm{
           \left(
           \int_{-\infty}^{\infty}
           J_{q-2}(\widetilde{X},u_2-h,u_2)^2 L_t^{u_2} \diff{u_2}
           \right)^{1/2}
           }_2
      \\ &\leq C \notag
           \left(
           \int_{-\infty}^{\infty}
           \norm{J_{q-2}(\widetilde{X},u_2-h,u_2)}_4^2
           \norm{L_t^{u_2}}_2 \diff{u_2}
           \right)^{1/2}
      \\
      \shortintertext{and using~\eqref{eq:69}, we can continue to write}
      &\leq
        C h^{(q+2)/2}
        \norm{L_t^{\ast}}_{2^{q}}^{q/2+r}
        \int_{-\infty}^{\infty} \norm{L_t^{u_2}}_2 \diff{u_2}.
    \end{align}
    \textit{Step 2}. Before treating the second integral of~\eqref{eq:36}, 
    note that
    \begin{equation}
      \label{eq:61}
      \abs{X_{u}(v)} = \abs{\int_u^{v} Y_x \diff{x}} \leq C \, (v-u)
    \end{equation}
    and thus, by Lemma~\ref{lem:jq},
    \begin{equation}
      \label{eq:17}
      \norm{J_q(X_{u-h},u-h,u)}_p \leq
      C_p
      h^{q/2 + 1} \norm{L_t^{\ast}}_{2^qp}^{q/2}.
    \end{equation}
    By Cauchy-Schwarz, it holds that
    \begin{align*}
      \MoveEqLeft[2] \abs{
      \int_{-\infty}^x J_{q-1}(X_{u-h},u-h,u) \left(L_t^u -
               L_t^{u-h}
      \right) \diff{u}}
      \\ &\leq
           \left(
           \int_{-\infty}^{\infty} J_{q-1}(X_{u-h},u-h,u)^2 \diff{u}
           \right)^{1/2}
           \left(
      \int_{-\infty}^{\infty} \abs{L_t^u - L_t^{u-h}}^{2}
           \diff{u} \right)^{1/2}
      \\ &=
           \left( h
           \int_{-\infty}^{\infty} J_{q-1}(X_{u-h},u-h,u)^2 \diff{u}
           \right)^{1/2}
           \left( \frac{1}{h}
      \int_{-\infty}^{\infty} \abs{L_t^u-L_t^{u-h}}^2
           \diff{u} \right)^{1/2}.
    \end{align*}
    Taking supremum and expectation and applying Cauchy-Schwarz once again
    yields 
    \begin{multline}
      \label{eq:59}
      \E \left[ \sup_{x \leq x_0} \abs{
           \int_{-\infty}^x J_{q-1}(X_{u-h},u-h,u) \left(L_t^u -
               L_t^{u-h}
             \right) \diff{u}} \right]
         \\ \leq
         h^{1/2}
         \E \left[
             \int_{-\infty}^{\infty} J_{q-1}(X_{u-h},u-h,u)^2 \diff{u}
           \right]^{1/2}
           \\
           \E \left[ \frac{1}{h}
                   \int_{-\infty}^{\infty}
                   \abs{L_t^u-L_t^{u-h}}^2
           \diff{u} 
           \right]^{1/2}.
         \end{multline}
      By~\cite[Thm. 1.1]{marcus_$l^p$_2008}, the second expectation on the right
      hand side converges to a bounded quantity, so we are done if we can
      show that the square root of the first is of order $h^\varepsilon$ for
      some $\varepsilon > q/2$.
    An application of It\^o's formula gives
    \begin{multline*}
      J_{q-1}(X_{u-h},u-h,u)^2
      \\ =
          \int_{u-h}^u J_{q-1}(X_{u-h},u-h,v) J_{q-2}(X_{u-h},u-h,v) \diff{M_v}
      \\+
      \int_{u-h}^u J_{q-2}(X_{u-h},u-h,v)^2 \diff{\left\langle M,M
        \right\rangle_v}.
     \end{multline*}
    Stochastic Fubini and Burkholder-Davis-Gundy yield that
    \begin{align*}
      \MoveEqLeft[2] 
      \E \left[  \int_{-\infty}^{\infty}  \int_{u-h}^u J_{q-1}(X,u-h,v)
        J_{q-2}(X,u-h,v) \diff{M_v} \diff{u}  \right]
      \\ &= \notag
           \E \left[
           \int_{-\infty}^{\infty}
           \int_{v}^{v+h}
            J_{q-1}(X,u-h,v)J_{q-2}(X,u-h,v)
           \diff{u}
            \diff{M_v}
            \right]
      \\ &\leq \notag
           \E \left[
           \int_{-\infty}^{\infty}
           \left(
           \int_{v}^{v+h}
           J_{q-1}(X,u-h,v) J_{q-2}(X,u-h,v) 
           \diff{u}
           \right)^2
           4 L_t^v
           \diff{v}
           \right]^{1/2},
    \end{align*}
    and several applications of deterministic Fubini and the Cauchy-Schwarz
    inequality further bound the preceding by
    \begin{multline*}
           2h^{3/4}
           \bigg(
           \int_{-\infty}^{\infty}
           \left(
           \int_{v}^{v+h}
           \norm{J_{q-1}(X,u-h,v)}_8^4 \norm{J_{q-2}(X,u-h,v)}_8^4
           \diff{u}
         \right)^{1/2}
         \\
           \norm{L_t^v}_2
           \diff{v}
           \bigg)^{1/2}.
         \end{multline*}
    Together with~\eqref{eq:69}, we arrive at
    \begin{equation}
      \label{eq:63}
      \begin{multlined}[.85\columnwidth]
              \E \left[  \int_{-\infty}^{\infty}  \int_{u-h}^u J_{q-1}(X,u-h,v)
                J_{q-2}(X,u-h,v) \diff{M_v} \diff{u}  \right]
              \\
      \leq
                 C h^{q+3/2}
           \norm{L_t^{\ast}}_{2^{q+2}}^{q-3/2}
           \left(
           \int_{-\infty}^{\infty}
           \norm{L_t^v}_2 \diff{v} \right)^{1/2}.
      \end{multlined}
    \end{equation}
    Similarly, using Fubini, Jensen's inequality and~\eqref{eq:69}, we can show
    that
    \begin{equation}
      \label{eq:64}
      \begin{multlined}[.85\columnwidth]
              \E \left[ \int_{-\infty}^{\infty}\int_{u-h}^u J_{q-2}(X,u-h,v)^2
        \diff{\left\langle M,M \right\rangle_v}
        \diff{u} \right]
     \\  \leq 
           C h^{q+1}
           \norm{L_t^{\ast}}_{2^{q-1}}^{q-2}
           \int_{-\infty}^{\infty}
           \norm{L_t^v}_2
           \diff{v}.
      \end{multlined}
    \end{equation}
    Plugging~\eqref{eq:63} and~\eqref{eq:64} back into~\eqref{eq:59} yields
    that
    \begin{equation*}
            \E \left[ \sup_{x \leq x_0} \abs{
           \int_{-\infty}^x J_{q-1}(X_{r,u-h},u-h,u) \left(L_t^u -
               L_t^{u-h}
             \right) \diff{u}} \right]^{1/2}
       \end{equation*}
    is of order $h^{(q+1)/2}$, concluding the proof.
\end{proof}

\begin{lemma}
  \label{lem:4}
  Let $J_{q-1}$ be defined with respect to the local martingale part $M_x$ of
  the local time~$L_t^x$ and $X_{u}(v) = \int_u^v Y_{x} \diff{x}$, where for
  some $\alpha>0$, $(Y_x)_{x \in \R}$ is a uniformly bounded, almost surely
  $\alpha$-H\"older continuous stochastic process adapted to $(L_t^x)_{x \in
    \R}$. Then, for any positive integer $q \geq 2$ and $x\in \R$, it holds that
  \begin{equation}
    \label{eq:14}
    \begin{multlined}[c][.85\columnwidth]
    \on{E} \left[ \left|
    \frac{1}{h^{q+1}} \int_{-\infty}^x J_{q-1}(X_{u-h},u-h,u)^2
    \diff{\left\langle M,M \right\rangle_u} \right. \right.
   \\-
  \left. \left.
      \frac{2^{2q+1}}{(q+1)!} \int_{-\infty}^x \left( L_t^u \right)^{q} Y_u^2
      \diff{u} 
    \right| \right]  \to 0.
    \end{multlined}
  \end{equation}
\end{lemma}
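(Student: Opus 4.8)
The plan is to split the integrand $J_{q-1}(X_{u-h},u-h,u)^2$ into a \emph{deterministic core} — which, after division by $h^{q+1}$, converges pathwise to the right-hand side of~\eqref{eq:14} — plus a finite family of \emph{martingale remainders}, which are made to vanish in $L^1$ by estimates of the same type as in the proof of Lemma~\ref{lem:3}.

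\emph{Decomposition.} Applying It\^o's formula to $v\mapsto J_n(X_{u-h},u-h,v)^2$ on $[u-h,u]$, using $\diff{J_n(X_{u-h},u-h,v)} = J_{n-1}(X_{u-h},u-h,v)\diff{M_v}$ and $J_n(X_{u-h},u-h,u-h)=0$ for $n\ge1$, one gets
\begin{equation*}
  J_n(X_{u-h},u-h,u)^2 = 2\int_{u-h}^u J_n(X_{u-h},u-h,v) J_{n-1}(X_{u-h},u-h,v) \diff{M_v} + \int_{u-h}^u J_{n-1}(X_{u-h},u-h,v)^2 \diff{\langle M,M\rangle_v}.
\end{equation*}
Iterating this from $n=q-1$ down to $n=1$ and recalling $J_0(X_{u-h},u-h,v)=X_{u-h}(v)$ together with the definition~\eqref{intk} of $K_{q-1}$ yields
\begin{equation*}
  J_{q-1}(X_{u-h},u-h,u)^2 = K_{q-1}(X_{u-h}^2,u-h,u) + \sum_{j=1}^{q-1} R_{u,h}^{(j)},
\end{equation*}
where $R_{u,h}^{(j)}$ is the $(q-1-j)$-fold iterated $\langle M,M\rangle$-integral over $[u-h,u]$ of the martingale $v\mapsto 2\int_{u-h}^v J_j(X_{u-h},u-h,y) J_{j-1}(X_{u-h},u-h,y)\diff{M_y}$ (in particular $R_{u,h}^{(q-1)} = 2\int_{u-h}^u J_{q-1} J_{q-2}\diff{M}$).

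\emph{The core term.} Since $Y$ is uniformly bounded, $\abs{X_{u-h}(w)}\le C(w-u+h)$; combining this with the a.s.\ $\alpha$-H\"older continuity of $Y$ gives $X_{u-h}(w)^2 = Y_u^2(w-u+h)^2 + \mathcal{E}_u(w,h)$ with $\abs{\mathcal{E}_u(w,h)}\le Ch^2\eta_u(h)$, where $\eta_u(h)\le\norm{Y}_\infty^2$ and $\eta_u(h)\to0$ a.s.\ as $h\to0$. Freezing similarly the weights $\diff{\langle M,M\rangle_w}=4L_t^w\diff{w}$ at $w=u$ (legitimate because $x\mapsto L_t^x$ is a.s.\ uniformly continuous on $\R$) and using the elementary identity
\begin{equation*}
  \int_{u-h\le w_1\le\dots\le w_{q-1}\le u}(w_1-u+h)^2\diff{w_1}\cdots\diff{w_{q-1}} = \frac{1}{(q-2)!}\int_0^h s^2(h-s)^{q-2}\diff{s} = \frac{2}{(q+1)!}\,h^{q+1},
\end{equation*}
one obtains $h^{-(q+1)}K_{q-1}(X_{u-h}^2,u-h,u) = \tfrac{2}{(q+1)!}(4L_t^u)^{q-1}Y_u^2 + \epsilon_u(h)$, where $\epsilon_u(h)\to0$ a.s.\ and, after multiplication by $4L_t^u$ and integration over $(-\infty,x]$, is dominated by a fixed integrable random variable (here one uses $\norm{L_t^\ast}_p<\infty$ from Theorem~\ref{lem:8}, $\int_\R L_t^u\diff{u}=t$, and Lemma~\ref{lem:5}). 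Dominated convergence and $4^q\cdot\tfrac{2}{(q+1)!}=\tfrac{2^{2q+1}}{(q+1)!}$ then show that $h^{-(q+1)}\int_{-\infty}^x K_{q-1}(X_{u-h}^2,u-h,u)\diff{\langle M,M\rangle_u}$ converges in $L^1$ to $\tfrac{2^{2q+1}}{(q+1)!}\int_{-\infty}^x(L_t^u)^qY_u^2\diff{u}$.

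\emph{The remainders, and the main difficulty.} It remains to prove $h^{-(q+1)}\int_{-\infty}^x R_{u,h}^{(j)}\diff{\langle M,M\rangle_u}\to0$ in $L^1$ for each $j$, which is the hard part. On the range of integration one has $u-h\le y\le v_1\le\dots\le v_{q-1-j}\le u$, so $v_1,\dots,v_{q-1-j}$ and $u$ all lie in $[y,y+h]$; hence every local-time weight $L_t^{v_i}$ and $L_t^u$ can be written as $L_t^y$ plus an increment of absolute value at most $\omega(h):=\sup_{\abs{a-b}\le h}\abs{L_t^a-L_t^b}$. For the part in which all weights are frozen at $L_t^y$, the stochastic Fubini theorem (justified by Lemma~\ref{lem:jq} exactly as in Step~1 of the proof of Lemma~\ref{lem:3}) moves $\diff{M_y}$ to the outside and produces a genuine $H^2_{0,\text{loc}}$-martingale in $y$ — the integrand against $\diff{M_y}$ being predictable precisely because the weights were frozen at the left endpoint $y\le u-h$ fails; rather $u-h\le y$, so the frozen weights and the $J$'s at $y$ are $\mathcal{G}_y$-measurable. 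Its $L^2$-norm is $\mathcal{O}(h^{q+3/2})$ by Burkholder--Davis--Gundy, the bounds of Lemma~\ref{lem:jq} for $J_j(X_{u-h},u-h,\cdot)$ and $J_{j-1}(X_{u-h},u-h,\cdot)$ (whose seed $X_{u-h}$ is $\mathcal{O}(h)$), and the decay in $u$ of $\norm{L_t^u}_p$ from Lemma~\ref{lem:5}; after division by $h^{q+1}$ this is $\mathcal{O}(h^{1/2})$. The remaining contributions, coming from the increments of $L_t^{\cdot}$, are bounded by Cauchy--Schwarz in $\Omega$ (as in Step~2 of the proof of Lemma~\ref{lem:3}) by $\norm{\omega(h)}_2$ times a quantity that stays bounded after division by $h^{q+1}$, and $\norm{\omega(h)}_2\to0$ since $0\le\omega(h)\le 2L_t^\ast$ and $\omega(h)\to0$ a.s. Thus the main obstacle is entirely in this last step: keeping track of the $q-1$ martingale remainders and, for each, carrying out the stochastic-Fubini plus Burkholder--Davis--Gundy estimate with the local-time weights frozen at a point $\le y$ (so that the integrand against $\diff{M_y}$ is predictable) while the error caused by the increments of $L_t^{\cdot}$ is absorbed into its modulus of continuity — essentially the bookkeeping already done in Lemma~\ref{lem:3}, now performed inside one further layer of $\langle M,M\rangle$-integration.
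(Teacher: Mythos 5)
Your proposal is correct and follows essentially the same route as the paper: the identical iterated It\^o decomposition of $J_{q-1}^2$ into the core term $K_{q-1}(X_{u-h}^2,u-h,u)$ plus martingale remainders, the same freezing of $Y$ and of the local-time weights to reduce the core to the deterministic simplex integral yielding $\tfrac{2}{(q+1)!}h^{q+1}$, and the same stochastic-Fubini-plus-Burkholder--Davis--Gundy mechanism (with weights frozen at a $\mathcal{G}_y$-measurable point and the increments absorbed into the modulus of continuity of $L_t^\cdot$) to gain the extra $h^{1/2}$ on the remainders. The only differences are cosmetic (you control increments via the a.s.\ modulus $\omega(h)$ and dominated convergence where the paper invokes the $L^p$ moduli of continuity of local time), so no further comment is needed.
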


\begin{proof}
  Throughout the proof, the first argument of $J_{q-1}$ will not
  change. For better readability, we will drop it and for example write    
  $J_{q-1}(u-h,u)$ instead of $J_{q-1}(X_{u-h},u-h,u)$. As before, $C$ and $C_p$
  denote positive constants, the latter depending on $p$, which might 
  change from line to line.   
  By It\^o's formula,
  \begin{align*}
    J_{q-1}(u-h,u)^2  
    &=
    \left(
      \int_{u-h}^{u} J_{q-2}(u-h,v)
      \diff{M_{v}}
    \right)^2
    \\ &
         \begin{multlined}[.65\columnwidth]
           =
         2
         \int_{u-h}^{u} J_{q-1}(u-h,v) J_{q-2}(u-h,v)
         \diff{M_{v}}
         \\ +
         \int_{u-h}^{u} J_{q-2}(u-h,v)^2 \diff{\left\langle M,M
         \right\rangle_v}.
         \end{multlined}
  \end{align*}
  Recursively, this yields the identity
  \begin{equation*}
    \label{eq:34}
    J_{q-1}(u-h,u)^2 = 2 \sum_{j=0}^{q-2} K_j(U_{j,q-1,u-h},u-h,u)
    + K_{q-1}(X_{u-h}^2,u-h,u),
  \end{equation*}
  where
  \begin{equation*}
    U_{j,q-1,u-h}(v) = \int_{u-h}^{v} J_{q-1-j}(u-h,v) J_{q-2-j}(u-h,v)
    \diff{M_v}. 
  \end{equation*}
 
  Let us first show that, for $0 \leq j \leq q-2$,
  \begin{equation}
    \label{eq:38}
     \E \left[ \frac{1}{h^{q+1}} \abs{\int_{-\infty}^x
         K_{j}(U_{j,q-1,u-h},u-h,u) 
      \diff{\left\langle M,M \right\rangle_u}}  \right]
    \to 0 
  \end{equation}
  and then, in a second step, that
  \begin{equation}
    \label{eq:41}
    \begin{multlined}[.85\columnwidth]
    \E \left[ \left|
      \frac{1}{h^{q+1}} \int_{-\infty}^x K_{q-1}(X_{u-h}^2,u-h,u)
      \diff{\left\langle M,M \right\rangle_u} \right. \right.
    \\ \left. \left.
       -
      \frac{2^{2q+1}}{(q+1)!} \int_{-\infty}^x (L_t^u)^{q} Y_u^2 \diff{u}
    \right| \right] \to 0
    \end{multlined}
  \end{equation}
  as $h \to 0$.
  
  \textit{Step 1.}
  It holds that $\abs{X_{u}(v)} \leq C \, (v-u)$ and thus,
  by Lemma~\ref{lem:jq}, 
  \begin{equation}
    \label{eq:23}
    \norm{J_q(u-h,u)}_p \leq C_p h^{q/2 + 1} \norm{L_t^{\ast}}_{2^qp}^{q/2}.
  \end{equation}
  Together with Burkholder-Davis-Gundy and Jensen's inequality,
  this implies for $p \geq 2$ that 
  \begin{align}
    \notag
    \MoveEqLeft[1]
    \norm{U_{j,q-1,u_1-h}(u_1)}_p
    \\ &\leq  \notag
         C_p
      \norm{ \left(\int_{u_1-h}^{u_1} J_{q-1-j}(u_1-h,u_2)^2
      J_{q-2-j}(u_1-h,u_2)^2 L_t^{u_2} 
      \diff{u_2} \right)^{1/2} }_p
    \\ &\leq \notag
         C_p
         h^{\frac{1}{2}-\frac{1}{p}}
    \\ &\qquad  \notag
         \left(\int_{u_1-h}^{u_1}
                  \E \left[ 
         \abs{J_{q-1-j}(u_1-h,u_2)
         J_{q-2-j}(u_1-h,u_2)}^p
         (L_t^{u_2})^{p/2}
          \right]
         \diff{u_2} \right)^{1/p}
    \\ &\leq \notag
         C_p
         h^{\frac{1}{2}-\frac{1}{p}}
                  \norm{L_t^{\ast}}_p^{1/2}
    \\ &\qquad \notag 
         \left(\int_{u_1-h}^{u_1}
         \norm{J_{q-1-j}(u_1-h,u_2)}^{p}_{4p}
         \norm{J_{q-2-j}(u_1-h,u_2)}^{p}_{4p}
         \diff{u_2} \right)^{1/p}
   \\ &\label{eq:uj} \leq
         C_p
         h^{q -j + 1}
         \norm{L_t^{\ast}}_{2^{q+1-j}p}^{q-j-1}.
  \end{align}
  Using this result, Jensen's inequality and Lemma~\ref{lem:jq} gives
  \begin{equation}
    \label{eq:42}
    \begin{multlined}[.85\columnwidth]
   \E \left[
      \abs{
    \int_{-\infty}^x  K_j  (U_{j,q-1,u-h},u-h,u) \diff{\left\langle M,M
      \right\rangle_u} 
      } \right]
    \\ = 4 
         \int_{-\infty}^x
        \E \left[
         \abs{K_j  (U_{j,q-1,u-h},u-h,u) L_t^u} 
         \right]
         \diff{u}
    \\ \leq
         C_p
         h^{q + 1}
         \norm{L_t^{\ast}}_{2^{q+2}}^{q+1}
         \int_{-\infty}^x \norm{L_t^u}_2 \diff{u},
    \end{multlined}
  \end{equation}
  which shows that the expectation appearing in~\eqref{eq:38} is
  bounded. 
  Before continuing to show that it actually vanishes in the limit, let us 
  informally describe the technique we are going to use.
  It is an important observation that if instead of one of the local times 
  $L_t^{u_j}$ inside of
  $K_j$ or $U_{j,q-1,u_1-h}$ we would have encountered a difference of the form
  $L_t^{u_j} - L_t^{u_j -  a}$ in the above calculations, where $\abs{a} < h$,
  then the corresponding norm of $L_t^{\ast}$, appearing as a factor in the
  bound on the right hand side 
  of~\eqref{eq:42} would instead be the norm of the increment and could thus be
  bounded by $h^{\varepsilon}$ for any $\varepsilon \in (0,1/2)$, increasing the order 
  of the right hand side of~\eqref{eq:42} to $h^{q+1+\varepsilon}$. Mutatis
  mutandis, the same argument is valid for the H\"older continuous process
  $Y_u$.  
  By linearity of the (stochastic) integral, this reasoning  allows us to
  replace $L_t^{u_j}$ by $L_t^{u_j-a} +  (L_t^{u_j} - L_t^{u_j-a})$ and
  $Y_{u_j}$ by $Y_{u_j-a} + (Y_{u_j} - Y_{u_j-a})$ at any place in the
  calculations above, at the cost of introducing a negligible
  summand. In what follows, we will make frequent use of this fact, in order to
  nudge the processes occuring in the iterations of $K_j$ back in space and
  make them adapted to the  Brownian motion driving the stochastic integral
  inside $U_{j,q-1,u_1-h}$. This enables us to iteratively apply the stochastic
  Fubini theorem and bring this Brownian motion to the very outside, effectively
  replacing the stochastic differential by a deterministic one and increasing
  the order of convergence by a factor of $h^{1/2}$, which suffices to conclude
  that~\eqref{eq:14} holds. Restating these arguments in a more rigorous way, we
  claim that for $0 \leq j \leq q-1$, the $L^1$-norms of the integrals 
  \begin{multline}
    \label{eq:44}
    \int_{-\infty}^x K_j(U_{j,q,u-h},u-h,u) \diff{\left\langle M,M
      \right\rangle_u}
    \\ =
    4
   \int_{-\infty}^x K_j(U_{j,q,u-h},u-h,u) L_t^u \diff{u}
  \end{multline}
  are of order $o(h^{q+1-\varepsilon})$ for any $\varepsilon \in (0,1/2)$. To
  prove this claim for $j=0$, note that
  \begin{align}
    \MoveEqLeft[5]
    \notag
    \int_{-\infty}^x K_0(U_{0,q-1,u_1-h},u_1-h,u_1) L_t^{u_1} \diff{u_1}
    \\ &= \notag
      \int_{-\infty}^x U_{0,q-1,u_1-h}(u_1) L_t^{u_1} \diff{u_1}
    \\ & \label{eq:30}
         \begin{multlined}[.65\columnwidth]
         =\int_{-\infty}^x U_{0,q-1,u_1-h}(u_1) L_t^{u_1-h} \diff{u_1}
         \\ +
         \int_{-\infty}^x U_{0,q-1,u_1-h}(u_1) \left(L_t^{u_1} -L_t^{u_1-h}
         \right) 
         \diff{u_1}.
         \end{multlined}
  \end{align}
  For the first integral on the right hand side of~\eqref{eq:30}, stochastic
  Fubini yields  
  \begin{align*}
    \MoveEqLeft[3]
    \int_{-\infty}^x U_{0,q-1,u_1-h}(u_1) L_t^{u_1-h} \diff{u_1}
    \\ &=
         \int_{-\infty}^x \int_{u_1-h}^{u_1} J_{q-1}(u_1-h,u_2) J_{q-2}(u_1-h,u_2)
         \diff{M_{u_2}}
         L_t^{u_1-h} \diff{u_1}
    \\ &=
     \int_{-\infty}^x \int_{u_2}^{(u_2+h) \land x} J_{q-1}(u_1-h,u_2)
         J_{q-2}(u_1-h,u_2)
         L_t^{u_1-h} \diff{u_1}         
         \diff{M_{u_2}}.
  \end{align*}
  A straightfoward application of the Burkholder-Davis-Gundy, Cauchy-Schwarz and
  Jensen inequalities, as well as Lemma~\ref{lem:jq}, thus yields
  so that by Burkholder-Davis-Gundy and deterministic Fubini
  \begin{align*}
    \MoveEqLeft[1] \E \left[
      \abs{
      \int_{-\infty}^x U_{0,q-1,u_1-h}(u_1) L_t^{u_1-h} \diff{u_1}
      }
      \right] 
    \\ &=
         \E \left[
        \abs{
              \int_{-\infty}^x \int_{u_2}^{(u_2+h) \land x} J_q(u_1-h,u_2)
         J_{q-1}(u_1-h,u_2)
         L_t^{u_1-h} \diff{u_1}         
         \diff{M_{u_2}}
         }
         \right]
    \\ &\leq
                  C_p h^{2q+3/2}
         \norm{L_t^{\ast}}_{2^{q+1}}^{2q+3/2}
         \left(\int_{-\infty}^x \norm{L_t^{u_2}}_2 \diff{u_2}
         \right)^{1/2}.
  \end{align*}
  To treat the second integral on the right hand side of~\eqref{eq:30}, use
  Cauchy-Schwarz to get
  \begin{multline}
    \label{eq:35}
            \E \left[  \abs{
      \int_{-\infty}^x
    U_{0,q-1,u_1-h}(u_1)
    \left(L_t^{u_1} -L_t^{u_1-h} \right) 
    \diff{u_1}}
      \right]
    \\ \leq
         \E \left[ h
         \int_{-\infty}^x
         (U_{0,q-1,u_1-h}(u_1))^2
         \diff{u_1}
         \right]^{1/2} \\
         \E \left[ \frac{1}{h}
           \int_{-\infty}^x
           \left( L_t^{u_1} - L_t^{u_1-h} \right)^2
         \diff{u_1}
          \right]^{1/2}.
  \end{multline}
  By~\cite[Thm. 1.1]{marcus_$l^p$_2008}, the second expectation converges to a
  bounded quantity. It\^o's formula, applied to the integrand of the first,
  gives 
  \begin{align*}
    &U_{0,q-1,u_1-h}(u_1)^2
    \\ &=
    \left(
    \int_{u_1-h}^{u_1} J_{q-1}(u_1-h,u_2) J_{q-2}(u_1-h,u_2) \diff{M_{u_2}}
      \right)^2
    \\ &=
         \begin{multlined}[t][.85\columnwidth]
                2 \int_{u_1-h}^{u_1} U_{0,q-1,u_1-h}(u_2) J_{q-1}(u_1-h,u_2)
         J_{q-2}(u_1-h,u_2) \diff{M_{u_2}}
          \\+
         \int_{u_1-h}^{u_1} J_{q-1}(u_1-h,u_2)^2 J_{q-2}(u_1-h,u_2)^2
         \diff{\left\langle M,M \right\rangle_{u_2}}
       \end{multlined}
    \\ &= V_1 + V_2.
  \end{align*}
  By stochastic Fubini, we obtain that
  \begin{multline*}
    \E \left[ \int_{-\infty}^x V_1 \diff{u_1} \right]
    = 2
         \E \bigg[
         \int_{-\infty}^x
         \int_{u_2}^{u_2+h} U_{0,q-1,u_1-h}(u_2)
               \\ 
         J_{q-1}(u_1-h,u_2)
             J_{q-2}(u_1-h,u_2)
         \diff{u_1}
         \diff{M_{u_2}}
         \bigg].
   \end{multline*}
   Thus, after  straightforward application of Burkholder-Davis-Gundy and Jensen's
   inequality, the estimates~\eqref{eq:23} and~\eqref{eq:uj} yield the bound
   \begin{equation*}
     \E \left[ \int_{-\infty}^x V_1 \diff{u_1} \right]
     \leq
     C h^{2q+5/2} \norm{L_t^{\ast}}_{2^{q+5}}^{2(q+r-2)}
     \left(
       \int_{-\infty}^x \norm{L_t^{u_2}}_2 \diff{u_2}
     \right)^{1/2}.
   \end{equation*}
   The same arguments also work for $\Ex{\int_{-\infty}^x (-V_1) \diff{u_1}}$,
   so that we obtain
   \begin{equation*}
     \E \left[ \abs{\int_{-\infty}^x V_1 \diff{u_1}} \right]
     \leq
     C h^{2q+5/2} \norm{L_t^{\ast}}_{2^{q+5}}^{2(q+r-2)}
     \left(
       \int_{-\infty}^x \norm{L_t^{u_2}}_2 \diff{u_2}
     \right)^{1/2}.
   \end{equation*}
   Similarly,
   \begin{equation*}
     \E \left[ \abs{\int_{-\infty}^x  V_2 \diff{u_1}} \right]
     \leq
          C h^{2q+2} \norm{L_t^{\ast}}_{2^{q+2}}^{2q+4r-3}  \int_{-\infty}^x
          \norm{L_t^{u_2}}_2 \diff{u_2}. 
   \end{equation*}
   Plugged back into~\eqref{eq:35}, we see that
   \begin{equation*}
                 \E \left[  \abs{
      \int_{-\infty}^x
    U_{0,q-1,u_1-h}(u_1)
    \left(L_t^{u_1} -L_t^{u_1-h} \right) 
    \diff{u_1}}
      \right]
    \end{equation*}
    is of order $\mathcal{O}(h^{q+3/2})$,  concluding the proof for $j=0$.
   To obtain the asymptotic order of~\eqref{eq:44} for $j \geq 1$, we write
   \begin{equation}
     \label{eq:43}
     \begin{multlined}[.85\columnwidth]
    \int_{u_1 -h}^{u_j} U_{j,q-1,u_1-h}(u_{j+1})
    L_t^{u_{j+1}} \diff{u_{j+1}}
    \\ =
        \int_{u_1 -h}^{u_j} U_{j,q-1,u_1-h}(u_{j+1})
        L_t^{u_{l+1}-u_j+u_1-h} \diff{u_{j+1}}
        +
        R_h,
     \end{multlined}
  \end{equation}
  where
  \begin{equation*}
    R_h =  \int_{u_1 -h}^{u_j} U_{j,q-1,u_1-h}(u_{j+1})
        \left(L_t^{u_{j+1}}- L_t^{u_{l+1}-u_j+u_1-h} \right) \diff{u_{j+1}}.
  \end{equation*}
  Note that $R_h$, when plugged back into the
  integral~\eqref{eq:44}, introduces a negligible summand (see~\eqref{eq:42} and
  the arguments afterwards). 
  The stochastic process
  \begin{equation*}
    u_{j+2} \mapsto
    \left(
     \int_{ u_{j+2}}^{u_j} L_t^{u_{j+1}-u_j+u_1-h}
     \diff{u_{j+1}}
     \right), \qquad u_1-h \leq u_{j+2} \leq {u_j}   
 \end{equation*}
 is by construction adapted to
  $(\mathcal{F}_{u_{j+2}})_{u_1-h \leq u_{j+2} \leq {u_j}}$ as
  $u_{j+1}-u_j+u_1-h \leq u_1-h$, where $(\mathcal{F}_x)_{x \in \R}$ denotes the
  filtration of the underlying probability space. Therefore, we can apply 
  stochastic Fubini and get
  \begin{align*}
    \notag
    \int_{u_1 -h}^{u_j} &U_{j,q-1,u_1-h}(u_{j+1})
    L_t^{u_{j+1}-u_j+u_1-h} \diff{u_{j+1}}
    \\ &= \notag
         \begin{multlined}[t][.7\columnwidth]
         \int_{u_1-h}^{u_j}
         \int_{u_1-h}^{u_{j+1}}
         J_{q-1-j}(u_1-h,u_{j+2}) J_{q-2-j}(u_1-h,u_{j+2})
         \\
         \diff{M_{u_{j+2}}}
         L_t^{u_{j+1}-u_j+u_1-h} \diff{u_{j+1}}
       \end{multlined}
    \\ &= \notag
         \begin{multlined}[t][.7\columnwidth]
         \int_{u_1-h}^{u_j}
         \int_{u_{j+2}}^{u_j}
         L_t^{u_{j+1}-u_j+u_1-h} \diff{u_{j+1}}
         \\
         J_{q-1-j}(u_1-h,u_{j+2}) J_{q-2-j}(u_1-h,u_{j+2})
         \diff{M_{u_{j+2}}}.
       \end{multlined}
  \end{align*}
  Iterating this procedure of interchanging a deterministic and a stochastic
  integral at the cost of introducing negligible terms, we
  obtain that
  \begin{align*}
   \MoveEqLeft  K_j(U_{j,q-1,u_1-h},u_1-h,u_1) 
    \\ &=
      \begin{multlined}[t][.8\columnwidth]
        \int_{u_1-h}^{u_1} \dots \int_{u_1-h}^{u_j} U_{j,q-1,u_1-h}(u_{j+1})
        L_t^{u_{j+1}} \diff{u_{j+1}} \dots L_t^{u_1} \diff{u_2}
      \end{multlined}
    \\ &=
         \begin{multlined}[t][.8\columnwidth]
           \int_{u_1-h}^{u_1}
           \int_{u_{j+2}}^{u_1} \int_{u_{j+2}}^{u_2} \dots \int_{u_{j+2}}^{u_j}
           \\
           L_t^{u_{j+1}-u_j+u_1-h} \diff{u_{j+1}}
           \dots
           L_t^{u_3-u_2+u_1-h} \diff{u_{j_3}}
           L_t^{u_2-u_1+u_1-h} \diff{u_{j_2}}
           \\
           J_{q-1-j}(u_1-h,u_{j+2})J_{q-2-j}(u_1-h,u_{j+2})
           \diff{M_{u_{j+2}}}
         \end{multlined}
     \\ & \qquad \qquad+ R_h,
  \end{align*}
  where $\Ex{\abs{R_h}}=o(h^{q+1})$.
  Consequently, by another
  application of stochastic Fubini, 
  \begin{align} \notag
   \MoveEqLeft \int_{-\infty}^x K_j(U_{j,q-1,u_1},u_1-h,u_1) L_t^{u_1} \diff{u_1}
    \\ &= \notag
    \begin{multlined}[t][.8\columnwidth]
           \int_{-\infty}^x
           \int_{u_1-h}^{u_1}
           \int_{u_{j+2}}^{u_1} \int_{u_{j+2}}^{u_2} \dots \int_{u_{j+2}}^{u_j}
           \\
           L_t^{u_{j+1}-u_j+u_1-h} \diff{u_{j+1}}
           \dots
           L_t^{u_3-u_2+u_1-h} \diff{u_{j_3}}
           L_t^{u_2-u_1+u_1-h} \diff{u_{j_2}}
           \\
           J_{q-1-j}(u_1-h,u_{j+2})J_{q-2-j}(u_1-h,u_{j+2})
           \diff{M_{u_{j+2}}}
           L_t^{u_1}
           \diff{u_1}
         \end{multlined}
    \\ &\qquad \qquad           + R_h \notag
    \\ &= \label{eq:6}
    \begin{multlined}[t][.8\columnwidth]
           \int_{-\infty}^x
           \int_{u_{j+2}}^{(u_{j+2}+h)\land x}
           \int_{u_{j+2}}^{u_1} \int_{u_{j+2}}^{u_2} \dots \int_{u_{j+2}}^{u_j}
           \\
           L_t^{u_{j+1}-u_j+u_1-h} \diff{u_{j+1}}
           \dots
           L_t^{u_3-u_2+u_1-h} \diff{u_{j_3}}
           L_t^{u_2-u_1+u_1-h} \diff{u_{j_2}}
           \\
           J_{q-1-j}(u_1-h,u_{j+2})J_{q-2-j}(u_1-h,u_{j+2})
                      L_t^{u_1}
           \diff{u_1}
           \diff{M_{u_{j+2}}}
         \end{multlined}
               \\ &\qquad \qquad + \widetilde{R}_h, \notag
  \end{align}
  where again, $\Ex{\abs{\widetilde{R}_h}}=o(h^{q+1})$.
  From
  here, a tedious but straightforward application of Burkholder-Davis-Gundy,
  Jensen's inequality and Lemma~\ref{lem:jq}, in the same way as we have done to
  treat 
  the case $q=0$, yields that the $L^1$-norm of the iterated
  integral~\eqref{eq:6} is of order $o(h^{q+1+\varepsilon})$ for any
  $\varepsilon \in  
  (0,1/2)$.  

  \textit{Step 2.}
  As $\abs{X_{r,u}(v)} \leq C \, (v-u)$, Lemma~\ref{lem:jq} implies that
  \begin{equation*}
    \norm{K_{q-1}(X_{r,u-h}^2,u-h,u)}_2 \leq C h^{q+1}
    \norm{L_t^{\ast}}_{2^{q}}^{q-1}
  \end{equation*}
  and thus, by Fubini,
  \begin{align*}
    \MoveEqLeft[9]
    \E \left[ \abs{
    \int_{-\infty}^x K_{q-1}(X_{r,u-h}^2,u-h,u) \diff{\left\langle M,M
    \right\rangle_u}}
    \right]
   \\  &=4
      \int_{-\infty}^x
      \E \left[ \abs{K_{q-1}(X_{r,u-h}^2,u-h,u) L_t^u} \right]
      \diff{u}
    \\ &\leq
      4
      \int_{-\infty}^x
      \norm{K_{q-1}(X_{r,u-h}^2,u-h,u)}_2 \norm{L_t^u}_2
      \diff{u}
    \\ &\leq
      C h^{q+1} \norm{L_t^{\ast}}_{2^q}^{q-1} \int_{-\infty}^x
         \norm{L_t^u}_2 
         \diff{u}.
  \end{align*}
  Reasoning as in Step 1, we see that replacing one of the local times 
  inside $K_q$ or $X_{r,u-h}$ by a difference $L_t^u-L_t^{u-a}$ such that
  $\abs{a}<h$ has the  
  effect of replacing one power of $\norm{L_t^{\ast}}$ by $h^{\varepsilon}$,
  and consequently introducing an additional, negligible summand. 
  To exhibit the asymptotic behaviour of
  \begin{align}
    \notag
    \MoveEqLeft K_{q-1}(X_{r,u_1-h}^2,u_1-h,u_1)
    \\ &= \notag
         \begin{multlined}[t][.85\columnwidth]
             \int_{u_1-h}^{u_1} \dots
    \int_{u_1-h}^{u_{q-2}} 
         \int_{u_1-h}^{u_{q-1}}
         \left(
           \int_{u_1-h}^{u_{q}}
           Y_{u_{q+1}}
        \diff{u_{q+1}}\right)^2
  \\
         \diff{\left\langle M,M \right\rangle_{u_q}}
         \diff{\left\langle M,M \right\rangle_{u_{q-1}}}
         \dots
         \diff{\left\langle M,M \right\rangle_{u_2}}
         \end{multlined}
    \\ &= \label{eq:65}
         \begin{multlined}[t][.85\columnwidth]
               4^{q-1}
    \int_{u_1-h}^{u_1} \dots
    \int_{u_1-h}^{u_{q-2}} 
         \int_{u_1-h}^{u_{q-1}}
         \left(
    \int_{u_1-h}^{u_{q}}
    Y_{u_{q+1}} \diff{u_{q+1}} \right)^2
  \\
    L_t^{u_{q}} \diff{u_{q}}
    L_t^{u_{q-1}} \diff{u_{q-1}}
    \dots
    L_t^{u_{2}} \diff{u_{2}},
         \end{multlined}
  \end{align}
  note that for two real numbers $a$ and $b$, it holds that
 \begin{equation*}
   a^r - b^r = \sum_{k=1}^{r} a^{r-k} (a-b) b^{k-1}.
 \end{equation*}
 Thus, setting $a=Y_{u_{q+1}}$, $b=Y_{u_1}$ and exploiting the H\"older
 continuity of $Y$, we can replace the innermost 
 integral $\int_{u_1-h}^{u_q} Y_{u_{q+1}} \diff{u_{q+1}}$ in~\eqref{eq:65}
 by $Y_{u_1} (u_{q+1}-u_1+h)$, at the cost of introducing negligible 
 summands. In formulas, up to negligible summands, the right hand side of~\eqref{eq:65} is
 equal to 
 \begin{equation*}
   4^{q-1}
   Y_{u_1}^2
   \int_{u_1-h}^{u_1} \dots
    \int_{u_1-h}^{u_{q-2}} 
    \int_{u_1-h}^{u_{q-1}}
    (u_{q+1}-u_1+h)^2
    L_t^{u_{q}} \diff{u_{q}}
    L_t^{u_{q-1}} \diff{u_{q-1}}
    \dots
    L_t^{u_{2}} \diff{u_{2}}.
 \end{equation*}
 Repeating this procedure, iteratively replacing $L_t^{u_q}$, $L_t^{u_{q-1}}$,
 etc. and evaluating the resulting purely deterministic integral, we see that 
  \begin{equation*}
    K_{q-1}(X_{u_1-h}^2,u_1-h,u_1)
    =
    \frac{2^{2q-1}}{(q+1)!} h^{q+1}
    (L_t^{u_1})^{q-1} Y_{u_1}^2 + o(h^{q+1}).
  \end{equation*}
  Consequently
  \begin{equation*}
    \frac{1}{h^{q+1}} \int_{-\infty}^x K_{q-1}(X_{u-h}^2,u-h,u) \diff{\left\langle M,M
      \right\rangle_{u}}
    =
    \frac{2^{2q+1}}{(q+1)!}
    \int_{-\infty}^x \left( L_t^u \right)^{q} Y_u^2 \diff{u} + o(1),
  \end{equation*}
  concluding the proof.
\end{proof}

\section{Main result}
\label{s:main}

We now have all necessary tools at our disposal to prove Theorem~\ref{thm:4}
stated in the introduction.
We will proceed in two  steps. First, in the forthcoming Theorem~\ref{thm:1}, we
will prove a limit theorem for a certain iterated integral, which, in the proof
of Theorem~\ref{thm:4}, will turn out to be the leading term when applying the
Kailath-Segall identity. 

\begin{theorem}
  \label{thm:1}
  Let $M_x$ be the local martingale part of Brownian local time $L_t^x$ and
  $I_q$ be the iterated integrals with respect to $M_x$. Then, for any integer
  $q \geq 2$ it holds that 
  \begin{equation}
    \label{eq:27}
    \frac{q!}{h^{(q+1)/2}}
    \int_{-\infty}^{\infty} I_q(x,x+h) \diff{x}
    \to
    c_q
    \sqrt{
      \int_{-\infty}^{\infty} \left( L_t^x \right)^{q} \diff{x}
    } \, Z, 
  \end{equation}
  where $Z \sim \mathcal{N}(0,1)$, independent of $(L_t^{x})_{x \in \R}$ and the
  constant 
  $c_q$ is given by
  \begin{equation*}
     c_q = \frac{2^{2q+1} q!}{q+1}.
  \end{equation*}
\end{theorem}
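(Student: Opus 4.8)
The plan is to reduce the integral $\int_{-\infty}^\infty I_q(x,x+h)\,\diff x$ to a single stochastic integral against $M$ via the stochastic Fubini theorem, and then apply the asymptotic Ray-Knight Theorem (Theorem~\ref{thm:ark}) in a suitable form. First I would iterate the stochastic Fubini theorem: writing $I_q(x,x+h) = \int_x^{x+h} I_{q-1}(x,u)\,\diff M_u$, one brings the outer $\diff x$ integration inside, obtaining $\int_{-\infty}^\infty I_q(x,x+h)\,\diff x = \int_{-\infty}^\infty \bigl(\int_{(u-h)\vee(-\infty)}^{u} I_{q-1}(x,u)\,\diff x\bigr)\diff M_u$, and then peeling off further layers so that the whole quantity becomes, up to terms controlled by Lemma~\ref{lem:jq}, an iterated stochastic integral of the form $\int_{-\infty}^\infty \cdots \diff M$. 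Each application is justified by the $L^p$-bounds on $J_q$ and $K_q$ from Lemma~\ref{lem:jq} together with the integrability of $\norm{L_t^x}_p$ in Lemma~\ref{lem:5}.

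Next I would identify the martingale $N_x^h := \tfrac{q!}{h^{(q+1)/2}}\int_{-\infty}^x I_q(y,y+h)\,\diff y$ (as a process in the upper endpoint $x$) as an element of $H^2_{0,\mathrm{loc}}$ and compute the asymptotics of its quadratic variation $\langle N^h, N^h\rangle_x$ as $h\to 0$. This is exactly where Lemma~\ref{lem:4} enters: after the Fubini reduction, $\langle N^h,N^h\rangle_x$ is (up to negligible terms) of the form $\tfrac{1}{h^{q+1}}\int_{-\infty}^x J_{q-1}(X_{u-h},u-h,u)^2\,\diff\langle M,M\rangle_u$ with $Y\equiv 1$, so Lemma~\ref{lem:4} gives
\begin{equation*}
  \langle N^h,N^h\rangle_x \;\xrightarrow{L^1}\; \frac{(q!)^2\,2^{2q+1}}{(q+1)!}\int_{-\infty}^x (L_t^u)^q\,\diff u \;=\; \frac{q!\,2^{2q+1}}{q+1}\int_{-\infty}^x (L_t^u)^q\,\diff u .
\end{equation*}
In particular $\langle N^h,N^h\rangle_\infty \to c_q^2\int_{-\infty}^\infty (L_t^u)^q\,\diff u$ with $c_q = \tfrac{2^{2q+1}q!}{q+1}$, matching the claimed constant (note $c_q^2 = \tfrac{2^{2q+2}(q!)^2}{(q+1)^2}$, so one should double-check the precise bookkeeping of the $q!$ factors here).

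Finally I would pass from convergence of quadratic variations to convergence in distribution. Using the Dambis-Dubins-Schwarz representation, $N^h$ is a time-changed Brownian motion $\beta^h$ run for time $\langle N^h,N^h\rangle_\infty$; I would apply Theorem~\ref{thm:ark} with the pair consisting of (a fixed martingale generating the filtration of $(L_t^x)_x$, e.g. $M$ itself or $\beta$ from Corollary~\ref{cor:2}) and $N^h$, checking the asymptotic orthogonality condition~\eqref{eq:7}: the cross-bracket $\langle M, N^h\rangle_x$ carries an extra factor $h^{1/2}$ relative to $\langle N^h,N^h\rangle_x$ (it involves $J_{q-1}$ paired with $J_0 = 1$ rather than two copies of $J_{q-1}$), hence tends to $0$ uniformly on compacts. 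This yields that the DDS Brownian motion of $N^h$ becomes, in the limit, independent of $(L_t^x)_x$; combined with $\langle N^h,N^h\rangle_\infty \to c_q^2\int (L_t^u)^q\,\diff u$ one concludes $N^h_\infty \xrightarrow{d} c_q\sqrt{\int_{-\infty}^\infty (L_t^x)^q\,\diff x}\;Z$ with $Z$ standard Gaussian independent of the local time, which is~\eqref{eq:27}.

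The main obstacle I expect is the first step: carrying out the iterated stochastic Fubini interchanges rigorously and showing that all the remainder terms produced along the way (boundary effects at the endpoints $u-h$, and the error terms from replacing $L_t^u$ by $L_t^{u-h}$ to make integrands adapted to the right Brownian motion) are genuinely negligible at order $h^{(q+1)/2}$ — this is precisely the content distilled into Lemmas~\ref{lem:3} and~\ref{lem:4}, and assembling them into the clean statement $N^h \approx$ (a single DDS-representable martingale) with the stated bracket asymptotics is the technical heart of the argument. A secondary subtlety is verifying the asymptotic orthogonality~\eqref{eq:7} uniformly in $x$ on $[-\infty,a]$ rather than just pointwise, which again reduces to an $L^1$-type estimate in the spirit of Lemma~\ref{lem:3}.
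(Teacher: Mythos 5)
Your proposal follows essentially the same route as the paper's proof: iterated stochastic Fubini to rewrite $\int_{-\infty}^{\infty} I_q(x,x+h)\,\diff{x}$ as a single martingale $\int_{-\infty}^{\infty} J_{q-1}(X_u,u-h,u)\,\diff{M_u}$ with $X_u(v)=v-u+h$, then Lemma~\ref{lem:4} for the bracket asymptotics, Lemma~\ref{lem:3} for the asymptotic orthogonality with $M$, and the asymptotic Ray--Knight theorem plus Dambis--Dubins--Schwarz to conclude. Your bookkeeping concern about the constant is resolved by noting that the correct normalization is $c_q=\sqrt{2^{2q+1}q!/(q+1)}$ as in~\eqref{eq:8}, so that $c_q^2=(q!)^2\cdot 2^{2q+1}/(q+1)!$ matches the limit of the quadratic variation exactly as you computed.
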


\begin{proof}
  By definition, we have that
  \begin{equation}
    \label{eq:53}
    \int_{-\infty}^{\infty} I_q(x,x+h) \diff{x}
    =
    \int_{-\infty}^{\infty} \int_{-\infty}^{\infty}
    1_{(x,x+h)}(u)  I_{q-1}(x,u) \diff{M_u} \diff{x}. 
  \end{equation}
  If we set
  \begin{equation*}
    \phi(x,u) = 1_{(x,x+h)}(u)  I_{q-1}(x,u),
  \end{equation*}
  then, by the deterministic Fubini theorem, Cauchy-Schwarz, Jensen's inequality
  and Lemma~\ref{lem:jq} (recall that $I_q(x,y)=J_q(1,x,y)$), 
  \begin{align*}
    \MoveEqLeft[4]
    \E \left[
      \int_{-\infty}^{\infty}
      \int_{-\infty}^{\infty} \phi(x,u)^2 \diff{x}
      \diff{\left\langle M,M \right\rangle_u}
    \right]
    \\ &= 4
         \E \left[ \int_{-\infty}^{\infty} \int_{u-h}^{u} I_{q-1}(x,u)^2
         \diff{x} L_t^u \diff{u} \right] 
    \\ &\leq
          4
          \int_{-\infty}^{\infty} \norm{ \int_{u-h}^{u} I_{q-1}(x,u)^2
          \diff{x}}_2 \norm{L_t^u}_2 \diff{u}
    \\ &\leq
          4
          \int_{-\infty}^{\infty} \norm{ \int_{u-h}^{u} I_{q-1}(x,u)^2
          \diff{x}}_2 \norm{L_t^u}_2 \diff{u}
    \\ &\leq
         C \int_{-\infty}^{\infty} \norm{L_t^u}_2 \diff{u}
         < \infty.
  \end{align*}
  This shows that we can apply the stochastic Fubini theorem to the right
  hand side of~\eqref{eq:53} and also that 
  \begin{equation*}
    \int_{-\infty}^{\cdot} \int_{-\infty}^{\infty} \phi(x,u) \diff{x} \diff{M_u}
  \end{equation*}
  is a square integrable martingale (which in particular posesses a limit).
  Therefore, we get 
  \begin{align*}
    \int_{-\infty}^{\infty} I_q(x,x+h) \diff{x}
    &=
    \int_{-\infty}^{\infty} \int_{u_1-h}^{u_1}
    I_{q-1}(x,u_1) \diff{M_{u_1}} \diff{x}
    \\ &=
    \int_{-\infty}^{\infty} \int_{u_1-h}^{u_1}
         \int_x^{u_1} I_{q-2}(x,u_2) \diff{M_{u_2}}
         \diff{x}
         \diff{M_{u_1}}.
  \end{align*}
  Iterating this procedure another $q-1$-times, we obtain
  \begin{align*}
    \int_{-\infty}^{\infty} I_q(x,x+h) \diff{x}
    &=
    \int_{-\infty}^{\infty} \int_{u_1-h}^{u_1} \dots \int_{u_1-h}^{u_{q-1}}
         \int_{u_1-h}^{u_q}
         \diff{x} \diff{M_{u_q}} \dots \diff{M_{u_2}} \diff{M_{u_1}}
    \\ &=
         \int_{-\infty}^{\infty} J_{q-1}(X_{u},u-h,u) \diff{M_u},
  \end{align*}  
  where
  \begin{equation*}
    X_{u}(v) = v-u+h.
  \end{equation*}
  By Lemma~\ref{lem:jq}, the process $(\widetilde{M}_x^h)_{x \geq -\infty}$,
  defined  by
  \begin{equation*}
    \widetilde{M}_{x}^h = \frac{q!}{h^{(q+1)/2}} \int_{-\infty}^x
    J_{q-1}(X_{u}, u-h,u) \diff{M_u}, 
  \end{equation*}
  is a $L^p$-bounded, uniformly integrable martingale for any $h > 0$, which
  vanishes at $-\infty$. Moreover,
  Lemma~\ref{lem:3} yields that for $x_0 > -\infty$
  \begin{equation*}
    \sup_{ x \in (-\infty,x_0]} \abs{\left\langle \widetilde{M}^h, M
      \right\rangle_x} \to 0 
  \end{equation*}
  and Lemma~\ref{lem:4} shows that for $x \in \R \cup \Set{-\infty,\infty}$, 
  \begin{equation*}
         \left\langle \widetilde{M}^h,\widetilde{M}^h \right\rangle_x \to c_q^2
         \int_{-\infty}^x \left( L_t^u \right)^q \diff{u},
  \end{equation*}
  where both convergences hold in $L^1$ (and we define $\int_{-\infty}^{-\infty}
  f(u) \diff{u} = 0$). 
  Consequently, the asymptotic Ray-Knight Theorem~\ref{thm:ark} implies that 
  \begin{equation}
    \label{eq:31}
    \widetilde{M}_x^h \xrightarrow{d} c_q \sqrt{\int_{-\infty}^x
      \left(L_t^x\right)^{q} \diff{x}} \, Z
  \end{equation}
   for $h \to 0$, where $Z \sim \mathcal{N}(0,1)$, independent of $(M_x)_{x
     \in \R}$
   (and thus also of $(L_t^x)_{x \in \R}$ and the underlying Brownian motion).
   Indeed, if $\beta$ and $\beta^h$ denote the Dambis-Dubins-Schwarz Brownian
   motions of $M$ and $\widetilde{M}^h$, respectively,
   Theorem~\ref{thm:ark} yields that
   \begin{equation*}
      \left(\beta,\beta^h,\left\langle \widetilde{M}^h,\widetilde{M}^h
      \right\rangle \right)
      \xrightarrow{d}
      \left(\beta,\widetilde{\beta},c_q^2 
     \int_{-\infty}^{\cdot} (L_t^u)^{q} \diff{u} \right),
   \end{equation*}
where
   $\widetilde{\beta}$ is a standard 
   Brownian motion which is independent of $\beta$. Consequently,
   \begin{equation*}
        \widetilde{M}^h_x = \beta^h_{\left\langle 
            \widetilde{M}^h,\widetilde{M}^h \right\rangle_x}
        \xrightarrow{d}
        \widetilde{\beta}_{c_q^2 
     \int_{-\infty}^x (L_t^u)^q \diff{u}}.
   \end{equation*}
   Letting $x$ tend to infinity finishes the proof.
\end{proof}

We now turn to the proof of Theorem~\ref{thm:4} from the introduction, which we
restate here for convenience.

\begin{theorem}
  \label{thm:2}
  For integers $q \geq 2$ it holds that
  \begin{equation*}
    \frac{1}{h^{\frac{q+1}{2}}}
    \left(
      \int_{-\infty}^{\infty} \left( L_t^{x+h} - L_t^x \right)^q 
      \diff{x} 
      +
      R_{q,h} \right)
    \xrightarrow{d}
    c_q
    \sqrt{
       \int_{-\infty}^{\infty} (L_t^x)^{q} \diff{x}
    } \, Z,
  \end{equation*}
  where $Z$ is a standard Gaussian random variable, independent of
  $(L_t^x)_{x \in \R}$, the random variable $R_{q,h}$ is given by
  \begin{equation*}
    R_{q,h} = 
          \sum_{k=1}^{\lfloor \frac{q}{2} \rfloor} a_{q,k} 
      \int_{-\infty}^{\infty} \left( L_t^{x+h} - L_t^x \right)^{q-2k} \left(
        4 \int_x^{x+h} L_t^u \diff{u} \right)^k \diff{x}
  \end{equation*}
  and the constants $a_{q,k}$ and $c_q$ are defined as
  \begin{equation*}
    a_{q,k} = \frac{(-1)^k q!}{2^k k! (q-2k)!} \qquad \text{and} \qquad
    c_q = \sqrt{\frac{2^{2q+1} q!}{q+1}}.
  \end{equation*}
\end{theorem}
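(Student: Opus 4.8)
The plan is to reduce Theorem~\ref{thm:2} to Theorem~\ref{thm:1} by showing that, after applying the Kailath--Segall identity to the martingale $M_{x_2}=I_1(x,x_2)$ on each interval $[x,x+h]$, the compensated $q$-th power of the local time increment coincides, up to terms which vanish after dividing by $h^{(q+1)/2}$, with $q!\int_{-\infty}^\infty I_q(x,x+h)\diff{x}$. The first step is to write, for fixed $x$, the increment $L_t^{x+h}-L_t^x$ via Corollary~\ref{cor:2} (or Theorem~\ref{lem:8}) as $M_{x+h}-M_x + \int_x^{x+h} A_{t,u}\diff{u}$, where $M$ is the local martingale part of $L_t^\cdot$, so that the quadratic variation of the martingale $u\mapsto M_u-M_x$ over $[x,x+h]$ is exactly $4\int_x^{x+h}L_t^u\diff{u}$. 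Identity~\eqref{eq:2} from Proposition~\ref{prop:kailath-segall}, applied with $x_1=x$, $x_2=x+h$, then gives
\begin{equation*}
  q!\,I_q(x,x+h) = \sum_{k=0}^{\lfloor q/2\rfloor} a_{q,k}\,(M_{x+h}-M_x)^{q-2k}\Bigl(4\int_x^{x+h}L_t^u\diff{u}\Bigr)^k,
\end{equation*}
so that the $k=0$ term is $(M_{x+h}-M_x)^q$ and the sum of the remaining terms, integrated in $x$, is precisely $-R_{q,h}$ with $M_{x+h}-M_x$ in place of $L_t^{x+h}-L_t^x$ in the factors. Hence
\begin{equation*}
  \int_{-\infty}^\infty (M_{x+h}-M_x)^q\diff{x} = q!\int_{-\infty}^\infty I_q(x,x+h)\diff{x} \;-\; \sum_{k=1}^{\lfloor q/2\rfloor} a_{q,k}\int_{-\infty}^\infty (M_{x+h}-M_x)^{q-2k}\Bigl(4\int_x^{x+h}L_t^u\diff{u}\Bigr)^k\diff{x}.
\end{equation*}

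The second step is to pass from $M$ back to $L_t^\cdot$ in two places: in the leading $q$-th power and in the factors of $R_{q,h}$. Since $L_t^{x+h}-L_t^x = (M_{x+h}-M_x) + \int_x^{x+h}A_{t,u}\diff{u}$, expanding $(L_t^{x+h}-L_t^x)^q$ and each $(L_t^{x+h}-L_t^x)^{q-2k}$ by the binomial theorem produces $(M_{x+h}-M_x)^q$ (resp.\ $(M_{x+h}-M_x)^{q-2k}$) plus correction terms each carrying at least one factor $\int_x^{x+h}A_{t,u}\diff{u}$. Using the integrability of $A_{t,\cdot}$ from Theorem~\ref{lem:8}, the a.s.\ bound $\int_x^{x+h}|A_{t,u}|\diff{u}\le Ch$ fails pointwise but $\int_{-\infty}^\infty|A_{t,x}|\diff{x}<\infty$ together with H\"older/BDG estimates on the martingale increments (in the spirit of Lemma~\ref{lem:jq} and Carlen--Kr\'ee's Lemma~\ref{lem:2}) shows that after dividing by $h^{(q+1)/2}$ all these correction terms converge to $0$ in $L^1$, hence in probability. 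The analogous but easier estimate handles the $R_{q,h}$ factors, since there $\int_x^{x+h}4L_t^u\diff{u}\le Ch$ uniformly and the extra powers of $h$ absorb the discrepancy. Combining,
\begin{equation*}
  \frac{1}{h^{(q+1)/2}}\Bigl(\int_{-\infty}^\infty (L_t^{x+h}-L_t^x)^q\diff{x} + R_{q,h}\Bigr)
  = \frac{q!}{h^{(q+1)/2}}\int_{-\infty}^\infty I_q(x,x+h)\diff{x} + o_{\mathbb P}(1),
\end{equation*}
and Theorem~\ref{thm:1} together with Slutsky's theorem delivers the claimed convergence in distribution with the same constant $c_q=\sqrt{2^{2q+1}q!/(q+1)}$ (note the $q!$ in $\tfrac{q!}{h^{(q+1)/2}}\int I_q$ cancels the $1/q!$ implicit in how Theorem~\ref{thm:1} is normalized, so the constants match).

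The main obstacle is the second step: controlling the finite-variation corrections. Unlike in Lemmas~\ref{lem:3} and~\ref{lem:4}, where the inner kernels $X_u(v)=\int_u^v Y_x\diff x$ satisfy a clean pointwise bound $|X_u(v)|\le C(v-u)$, the kernel $\int_x^{x+h}A_{t,u}\diff u$ is only controlled in an $L^1(\diff x)$ (or $L^p$-norm) sense, so one cannot simply pull out a factor $h$. The remedy is to integrate in $x$ first and exploit $\int_{-\infty}^\infty\|A_{t,x}\|_p\diff x<\infty$, estimating a typical cross term $\int_{-\infty}^\infty (M_{x+h}-M_x)^{q-1}\bigl(\int_x^{x+h}A_{t,u}\diff u\bigr)\diff x$ by Cauchy--Schwarz in $x$ against $\bigl(\int (M_{x+h}-M_x)^{2(q-1)}\diff x\bigr)^{1/2}$ and $\bigl(\int(\int_x^{x+h}A_{t,u}\diff u)^2\diff x\bigr)^{1/2}$, the first factor being $O(h^{(q-1)/2}\cdot h^{1/2})=O(h^{q/2})$ by Lemma~\ref{lem:2} and a change of variables, the second being $o(h^{1/2})$ by dominated convergence since $\int_x^{x+h}A_{t,u}\diff u\to 0$; higher cross terms carry more powers of this vanishing factor and are even smaller. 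Assembling these estimates carefully for every $k$ and every binomial term, and checking that each contributes $o(h^{(q+1)/2})$, is routine but lengthy; everything else follows from the results already established. One should also record here the explicit evaluations $R_{2,h}=4ht$ and $R_{3,h}=0$: for $q=2$, $R_{2,h}=a_{2,1}\int 4\int_x^{x+h}L_t^u\diff u\,\diff x = 4\int_{-\infty}^\infty\int_x^{x+h}L_t^u\diff u\,\diff x = 4h\int_{-\infty}^\infty L_t^u\diff u = 4ht$ by Fubini and $\int_{-\infty}^\infty L_t^u\diff u=t$; for $q=3$ the sum in $R_{3,h}$ runs only over $k=1$ and equals $a_{3,1}\int (L_t^{x+h}-L_t^x)\,4\int_x^{x+h}L_t^u\diff u\,\diff x$, which is genuinely nonzero, so in fact $R_{3,h}$ is of lower order rather than identically zero—this matches the remark in the introduction that $R_{3,h}$ can be dropped in the limit, and the present argument shows it is $o(h^2)$ in $L^1$ via the same Cauchy--Schwarz estimate, so it does not affect~\eqref{eq:32}.
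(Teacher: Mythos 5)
Your overall architecture (Kailath--Segall on each interval $[x,x+h]$, reduction to Theorem~\ref{thm:1}, control of the finite-variation corrections) matches the paper's, and you correctly identify the finite-variation corrections as the crux. But the remedy you propose for the crucial cross term does not close the gap. For the term with exactly one factor of $A$, namely $q\int_{-\infty}^{\infty}(M_{x+h}-M_x)^{q-1}\bigl(\int_x^{x+h}A_{t,u}\diff{u}\bigr)\diff{x}$, your Cauchy--Schwarz in $x$ gives a product of $\bigl(\int(M_{x+h}-M_x)^{2(q-1)}\diff{x}\bigr)^{1/2}$ and $\bigl(\int(\int_x^{x+h}A_{t,u}\diff{u})^2\diff{x}\bigr)^{1/2}$. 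The second factor is indeed $o(h^{1/2})$, but the first is only $O(h^{(q-1)/2})$: since $\E[(M_{x+h}-M_x)^{2(q-1)}]\asymp \E[\langle M\rangle_{x,x+h}^{q-1}]=O(h^{q-1})$ pointwise and the $x$-integration runs over the order-one support of the local time, there is no source for the extra $h^{1/2}$ you claim from "Lemma~\ref{lem:2} and a change of variables". The product is therefore only $o(h^{q/2})$, short of the required $o(h^{(q+1)/2})$ by exactly $h^{1/2}$. This term is genuinely borderline and cannot be discarded by size estimates alone. The paper keeps it, applies Kailath--Segall to $\int I_{q-1}^h(x)\Delta_x^hV_x\diff{x}$ as well, and eliminates the borderline contributions through the exact combinatorial cancellation $qa_{q-1,k}-(q-2k)a_{q,k}=0$; the surviving residual $\int I_{q-1}^h(x)\Delta_x^hV_x\diff{x}$ is then handled by stochastic Fubini (pulling the $\diff{M_u}$ integral outside), Burkholder--Davis--Gundy and the Vitali convergence theorem --- it is precisely the martingale orthogonality exploited there that recovers the missing $h^{1/2}$, and a direct Cauchy--Schwarz in $x$ cannot see it. The same issue recurs in your conversion of the compensator factors from $(M_{x+h}-M_x)^{q-2k}$ to $(\Delta_x^hL_t^x)^{q-2k}$: the terms carrying exactly one factor of $\Delta_x^hV$ there are again of order exactly $h^{(q+1)/2}$, not smaller, and only disappear through the same cancellation against the cross term.

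A secondary error: you assert that $R_{3,h}$ is "genuinely nonzero" and merely of lower order. In fact $\Delta_x^hL_t^x\int_x^{x+h}L_t^u\diff{u}=\tfrac12\frac{\diff{}}{\diff{x}}\bigl(\int_x^{x+h}L_t^u\diff{u}\bigr)^2$, so its integral over $\R$ vanishes identically by compact support of the local time, giving $R_{3,h}=0$ exactly; this is how the paper recovers~\eqref{eq:32} without any residual compensator.
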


\begin{proof}
  Let $L_t^x = M_x + V_x$ be the canonical semimartingale decomposition of
  Brownian local time (we suppress the dependence of the fixed parameter $t$ for
  brevity) and $I_q$ the iterated integrals with respect to the local martingale
  $M_x$.
  Throughout the proof, we use the shorthand notation $I_q^h(x)=I_q(x,x+h)$,
  $\Delta_x^h L_t^x = L_t^{x+h} - L_t^x$ and $\Delta_x^h
  V_{t,x} = V_{t,x+h}- V_{t,x}$.
By the binomial theorem and the fact that $M_{x+h}-M_x=I_1^h(x)$, we get that 
\begin{equation}
  \label{eq:40}
  \left(\Delta_x^h L_t^x \right)^q = \left( I_1^h(x) + \Delta_x^h V_{t,x}
  \right)^q 
  =
  \sum_{k=0}^q \binom{q}{k} \left( I_1^h(x) \right)^{q-k} \left( \Delta_x^h
    V_{t,x} \right)^k.
\end{equation}
As by Burkholder-Davis-Gundy
\begin{align*}
  \norm{ I_1^h(x)^{q-2k}}_p
  &\leq
  C_p
  \norm{
  \left(
  4 \int_x^{x+h} L_t^u \diff{u}
  \right)^{1/2}
  }_{(q-2k)p}^{q-2k}
  \\ &\leq
  C_p h^{(q-2k)/2} \norm{ L_t^{\ast}}_p^{(q-2k)/2}
\end{align*}
and, writing $\Delta_x^h V_{x} = \int_x^{x+h} A_{u} \diff{u}$, 
\begin{equation*}
  \norm{\left(\frac{\Delta_x^h V_{x}}{h}\right)^k }_p
  \leq
       \norm{\left( \frac{1}{h} \int_x^{x+h} \abs{A_{u}} \diff{u}
       \right)^{k}}_p
  \xrightarrow{h \to 0}
      \norm{A_{u}}_{kp}^k
\end{equation*}
we see that
\begin{equation*}
  h^{-(q+k)/2} \E  \left[\int_{-\infty}^{\infty} \abs{I_1^h(x)}^{q-k}
    \abs{\Delta_x^h V_{x}}^k \diff{x} \right]
  < \infty  
\end{equation*}
Thus, all those summands in
the sum on the right hand side of~\eqref{eq:40} for which $k > 1$ do not
contribute to the limit. To be more precise, it 
holds that 
\begin{multline}
  \label{eq:29}
  \int_{-\infty}^{\infty} \left( \Delta_x^h L_t^x 
  \right)^q  \diff{x}
  \\ =   
  \int_{-\infty}^{\infty} \left( I_1^h(x) \right)^q \diff{x} +
  q
  \int_{-\infty}^{\infty} \left( I_1^h(x) \right)^{q-1} (\Delta_x^h V_{x})
  \diff{x}
  + R_{1,h},
\end{multline}
where $R_{1,h}/h^{(q+1)/2}$ converges to zero in $L^p$ for $h \to 0$. 
The Kailath-Segall identity~\eqref{eq:1} and another application of the binomial
theorem yields
\begin{equation}
  \label{eq:21}
  \begin{split}
  q! I_q^h(x) - \left( I_1^h(x) \right)^q
  &=
       \sum_{k=1}^{\lfloor \frac{q}{2} \rfloor} a_{q,k}
       \left( I_1^h(x) \right)^{q-2k}
       \left( 4 \int_x^{x+h} L_t^u \diff{u}  \right)^k
  \\ &=
       \sum_{k=1}^{\lfloor \frac{q}{2} \rfloor} a_{q,k}
       \left( \Delta_x^h L_t^x - \Delta_x^h V_{t,x}\right)^{q-2k}
       \left( 4 \int_x^{x+h} L_t^u \diff{u}  \right)^k
       \\ &=
  \delta_{\lfloor \frac{q}{2} \rfloor, \frac{q}{2}} \, 
  a_{\frac{q}{2},k} \left( 4 \int_x^{x+h} L_t^u \diff{u} \right)^{q/2}
  \\ &\qquad + 
       \sum_{k=1}^{\lfloor \frac{q-1}{2} \rfloor} \sum_{j=0}^{q-2k} (-1)^j a_{q,k}
       \binom{q-2k}{j}
  \\ &\qquad \qquad
       \left( \Delta_x^h L_t^x \right)^{q-2k-j} \left(\Delta_x^{h} V_{t,x}
       \right)^j \left( 4 \int_x^{x+h} L_t^u \diff{u}  \right)^k, 
\end{split}  
\end{equation}
where $\delta_{\lfloor \frac{q}{2} \rfloor, \frac{q}{2}}=1$ if $q$ is even and
zero otherwise. By the H\"older continuity property of the Brownian local
time, $\Delta_x^h L_t^x < h^{\varepsilon}$ for any $ \varepsilon \in (0,1/2)$.
Therefore, a similar argument as above shows that if we first
multiply~\eqref{eq:21} by $(L_t^x)^r$ and then integrate on both sides, all
summands in the 
double sum for which  
$\frac{q-2k-j}{2} + k + j > \frac{q+1}{2}$, i.e. for which $j>1$ do not 
contribute to the limit. In formulas, we have
\begin{equation}
  \label{eq:20}
\begin{split}
  q!
  \int_{-\infty}^{\infty} I_q^h(x)  &\diff{x}
  -
  \int_{-\infty}^{\infty} \left( I_1^h(x) \right)^q  \diff{x}
  \\ & =  
  \sum_{k=1}^{\lfloor \frac{q}{2} \rfloor} a_{q,k}
       \int_{-\infty}^{\infty}
       \left( \Delta_x^h L_t^x \right)^{q-2k} \left( 4 \int_x^{x+h} L_t^u
         \diff{u}  \right)^k  
       \diff{x} 
  \\ &\qquad  -
  \sum_{k=1}^{\lfloor \frac{q-1}{2} \rfloor} a_{q,k}
  (q-2k)
  \\ & \qquad \qquad \qquad
  \int_{-\infty}^{\infty}
       \left( \Delta_x^h L_t^x \right)^{q-2k-1} \Delta_x^h V_{x} \left( 4
         \int_x^{x+h} L_t^u \diff{u}  \right)^k
       \diff{x}
  \\ & \qquad 
       + R_{2,h},
\end{split}  
\end{equation}
where $R_{2,h}/h^{(q+1)/2}$ converges to zero in $L^p$ for $h \to
0$. Analogously, we derive that
\begin{equation}
  \label{eq:19}
  \begin{multlined}[c][.85\columnwidth]
    q!
    \int_{-\infty}^{\infty} I_{q-1}^h(x) (\Delta_x^h V_{x}) \diff{x}
    -
    q \int_{-\infty}^{\infty} \left( I_1^h(x) \right)^{q-1} \Delta_x^h V_{x}
    (L_t^x)^r
    \diff{x}
    \\ = q
    \sum_{k=1}^{\lfloor \frac{q-1}{2} \rfloor}
    a_{q-1,k}
    \int_{-\infty}^{\infty}
    \left( \Delta_x^h L_t^{x} \right)^{q-1-2k}
    \Delta_x^h V_{x}
    \left( 4 \int_x^{x+h} L_t^u \right)^k
    \diff{x}
    + R_{3,h},
  \end{multlined}
\end{equation}
where $R_{3,h}/h^{(q+1)/2}$ converges to zero in $L^p$ for $h \to 0$. If
we now plug~\eqref{eq:20} and~\eqref{eq:19} into~\eqref{eq:29} and exploit the
identity $qa_{q-1,k} - (q-2k) a_{q,k}=0$ for $q \geq 2$ and $1 \leq k \leq \lfloor
\frac{q-1}{2} \rfloor$ (which can be shown by straightforward induction), we
obtain that
\begin{align*}
      \int_{-\infty}^{\infty} \left( \Delta_x^h L_t^x \right)^q &
      \diff{x}
      &=
      \begin{aligned}[t]
  q! &\int_{-\infty}^{\infty} I_q^h(x) \diff{x}
  \\  &-
  \sum_{k=1}^{\lfloor \frac{q}{2} \rfloor} a_{q,k} \int_{-\infty}^{\infty}
  \left( \Delta_x^h 
    L_t^x\right)^{q-2k} \left( 4 \int_x^{x+h} L_t^u \diff{u} \right)^k 
  \diff{x} 
  \\  &+
  q! \int_{-\infty}^{\infty} I_{q-1}^h(x) \Delta_x^h V_{x} \diff{x}
  \\ &+ \widetilde{R}_h,        
      \end{aligned}
\end{align*}
where $\widetilde{R}_h/h^{(q+1)/2}$ converges to zero in $L^p$ for $h \to
0$. By Theorem~\ref{thm:1}, the proof is finished if we can show that
\begin{equation}
  \label{eq:26}
  \frac{1}{h^{(q+1)/2}}
  \int_{-\infty}^{\infty} I_{q-1}^h(x) \Delta_x^h V_{x} \diff{x}
  \xrightarrow{d}  0.
\end{equation}
  For $V_{x} = \int_{-\infty}^{x}
  A_{u} \diff{u}$, we write
  \begin{multline}
    \label{eq:25}
    \frac{1}{h^{(q+1)/2}} \int_{-\infty}^{\infty} I_{q-1}^h(x) \Delta_x^h
    V_{x} \diff{x}
    \\ =\frac{1}{h^{(q-1)/2}} \int_{-\infty}^{\infty} I_{q-1}^h(x) A_{x}
    \diff{x}
    +
    \int_{-\infty}^{\infty} \frac{I_{q-1}^h(x)}{h^{(q-1)/2}} \left(
      \frac{1}{h} \int_x^{x+h} A_{u} 
    \diff{u} - A_{x} \right)  \diff{x}.
\end{multline}
For the first integral on the right hand side of~\eqref{eq:25},
stochastic  Fubini yields that 
  \begin{align*}
    \frac{1}{h^{(q-1)/2}}
    \int_{-\infty}^{\infty} I_{q-1}^h(x) A_{t,x}  \diff{x}
     &=
    \frac{1}{h^{(q-1)/2}}
    \int_{-\infty}^{\infty} \int_x^{x+h} I_{q-2}(x,u) \diff{M_u} A_{x} 
    \diff{x}
    \\ &=
    \frac{1}{h^{(q-1)/2}}
    \int_{-\infty}^{\infty} \int_{u-h}^{u} I_{q-2}(x,u) A_x  \diff{x}
         \diff{M_u}.
  \end{align*}
  Thus, by Burkholder-Davis-Gundy and Jensen's inequality
  \begin{align} \notag
    \MoveEqLeft[4]
    \norm{ \frac{1}{h^{(q-1)/2}} \int_{-\infty}^{\infty} \int_{u-h}^{u}
     I_{q-2}(x,u) 
     A_{x}  \diff{x}
     \diff{M_u}}_2
    \\ &\leq C \notag
     \norm{ \left( \int_{-\infty}^{\infty} \left( \frac{1}{h^{(q-1)/2}}
         \int_{u-h}^u 
         I_{q-2}(x,u) A_{x}  \diff{x} \right)^2
         L_t^u \diff{u} \right)^{1/2}}_2
    \\ &\leq
         C \notag
         \left( \int_{-\infty}^{\infty}
         \E \left[  \left( \frac{1}{h^{(q-1)/2}}
         \int_{u-h}^u 
         I_{q-2}(x,u) A_{x}  \diff{x} \right)^2
         L_t^u
          \right]
         \diff{u} \right)^{1/2}
    \\ &\leq
         \label{eq:62}
         C
         \left(
         \frac{1}{h^{q-1}}
         \int_{-\infty}^{\infty}
         \norm{ 
         \int_{u-h}^u 
         I_{q-2}(x,u) A_{x}  \diff{x}}_4^2
         \norm{L_t^u}_2
         \diff{u} \right)^{1/2}.
  \end{align}
  By Cauchy-Schwarz, Jensen and Lemma~\ref{lem:jq}, it follows that
  \begin{align*}
    \MoveEqLeft[4]
    \norm{ \int_{u-h}^u I_{q-2}(x,u) A_x  \diff{x}}_4
    \\  &\leq 
       \norm{
         \left(\int_{u-h}^u I_{q-2}(x,u)^2 \diff{x} \right)^{1/2}
         \left(\int_{u-h}^u A_x^2 \diff{x} \right)^{1/2} }_4
    \\ &\leq
       \norm{
         \int_{u-h}^u I_{q-2}(x,u)^2 \diff{x}}_{8}^{1/2}
         \norm{
         \int_{u-h}^u A_x^2 \diff{x}}_{8}^{1/2}
    \\ &\leq
         \left(
         h^{7}
         \int_{u-h}^u \norm{I_{q-2}(x,u)}_{16}^{16} \diff{x}
         \right)^{1/16}
         \norm{
         \int_{u-h}^u A_x^2 \diff{x}}_{8}^{1/2}
    \\ &\leq
         C
         h^{(q-1)/2} \norm{L_t^{\ast}}_{2^{q+2}}^{(q-2)/2}
         \norm{
         \int_{u-h}^u A_x^2 \diff{x}}_{8}^{1/2}, 
  \end{align*}
  which, plugged into~\eqref{eq:62}, yields
  \begin{multline*}
     \norm{ \frac{1}{h^{(q-1)/2}} \int_{-\infty}^{\infty} \int_{u-h}^{u}
     I_{q-2}(x,u) 
     A_{x}  \diff{x}
     \diff{M_u}}_2
   \\ \leq
   C
   \norm{L_t^{\ast}}_{2^{q+2}}^{(q-2)/2}
   \left(
   \int_{-\infty}^{\infty}
   \norm{\int_{u-h}^u A_x^2 \diff{x}}_8 \norm{L_t^u}_2 \diff{u}
   \right)^{1/2}
  \end{multline*}
  and by the Vitali convergence theorem (see for
  example~\cite[p.133]{rudin_real_1987}), the integral on the right hand side  
  converges to zero.
  
  Let us turn to the second integral on the right hand side
  of~\eqref{eq:25}. By deterministic Fubini,
  \begin{align}
    \notag
    \MoveEqLeft[8]
    \E
      \left[
      \abs{
      \int_{-\infty}^{\infty}
      \frac{I_{q-1}^h(x)}{h^{(q-1)/2}}
    \left(
      \frac{1}{h} \int_x^{x+h} A_u \diff{u} - A_x
      \right)
      \diff{x}}
      \right]
    \\ &\leq \notag
      \int_{-\infty}^{\infty}
      \E \left[
      \abs{
      \frac{I_{q-1}^h(x)}{h^{(q-1)/2}}
    \left(
      \frac{1}{h} \int_x^{x+h} A_u \diff{u} - A_x
      \right)
      }
      \right]
    \\ &\leq \label{eq:70}
            \int_{-\infty}^{\infty}
      \norm{\frac{I_{q-1}^h(x)}{h^{(q-1)/2}}}_2
      \norm{
      \frac{1}{h} \int_x^{x+h} A_u \diff{u} - A_x
      }_4
      \diff{x}.
  \end{align}
  Lemma~\ref{lem:2} gives that
  \begin{equation*}
    \norm{\frac{I_{q-1}^h(x)}{h^{(q-1)2}}}_2
    \leq \norm{\frac{1}{h} \int_x^{x+h} \left\langle M,M
      \right\rangle_u}_{2(q-1)}^{(q-1)/2}
    =
    4^{(q-1)/2} \norm{\frac{1}{h} \int_x^{x+h} L_t^u
      \diff{u}}_{2(q-1)}^{(q-1)/2}
  \end{equation*}
  and the right hand side converges to $4^{(q-2)/2}
  \norm{L_t^x}_{2(q-1)}^{(q-1)/2}$. As by Lemma~\ref{lem:5} it holds that
  \begin{equation*}
    \int_{-\infty}^{\infty}
    \norm{L_t^x}_{2(q-1)}^{(q-1)/2}\diff{x} < \infty,
  \end{equation*}
  the Vitali convergence theorem implies that the integral on the right
  hand side of~\eqref{eq:70} converges to zero as well, concluding the proof. 
\end{proof}

\begin{remark}
  \label{rem:4}
  In the following, we continue using the shorthand $\Delta_x^hL_t^x = L_t^{x+h}
  - L_t^x$. 
  \begin{enumerate}[1.]
  \item For $q=2$, Theorem~\ref{thm:2} reads
    \begin{multline*}
      \frac{1}{h^{3/2}}
      \left(
      \int_{-\infty}^{\infty} (\Delta_x^h L_t^x)^2 \diff{x}
      -
      4
      \int_{-\infty}^{\infty} \int_x^{x+h} L_t^u \diff{u} \diff{x}
      \right) \\
      \xrightarrow{d}
      \sqrt{
        \frac{64}{3} \int_{-\infty}^{\infty} (L_t^x)^2 \diff{x}
      } \, Z,
    \end{multline*}
    and as by Fubini and the occupation times formula
    \begin{equation*}
      4\int_{-\infty}^{\infty} \int_x^{x+h} L_t^u \diff{u} \diff{x}
      =
      4 \int_{-\infty}^{\infty} \int_{u-h}^{u} \diff{x} L_t^u \diff{u}
      =
      4ht,
    \end{equation*}
    we recover the second order result~\eqref{eq:9} from~\cite{chen_clt_2010}.
  \item For $q=3$, Theorem~\ref{thm:2} reads
    \begin{multline*}
      \frac{1}{h^2}
      \left(
        \int_{-\infty}^{\infty} (\Delta_x^h L_t^x)^3 \diff{x}
        - 12 \int_{-\infty}^{\infty} \Delta_x^h L_t^x \int_x^{x+h} L_t^u
        \diff{u} \diff{x}
      \right) \\
      \xrightarrow{d} \sqrt{192 \int_{-\infty}^{\infty} (L_t^x)^3 \diff{x}} \,
      Z 
    \end{multline*}
    and as
    \begin{equation}
      \label{eq:15}
      \int_{-\infty}^{\infty}
      \Delta_x^h L_t^x \int_x^{x+h} L_t^u
      \diff{u} \diff{x}
      =
      \lim_{n \to \infty}
      \int_{-n}^{n}
      \frac{\mathrm{d}}{\mathrm{d} x}
      \left( \int_x^{x+h} L_t^u \diff{u} \right)^2
      \diff{x}
      = 0,
    \end{equation}
    we recover the third order result~\eqref{eq:32} from~\cite{rosen_clt_2011}. 
  \item For $q=4$, Theorem~\ref{thm:2} becomes
    \begin{multline}
      \label{eq:18}
      \frac{1}{h^{5/2}}
      \left(
        \int_{-\infty}^{\infty} (\Delta_x^h L_t^x)^4 \diff{x}
        - 24 \int_{-\infty}^{\infty} (\Delta_x^h L_t^x)^2 \int_x^{x+h} L_t^u
        \diff{u} \diff{x} \right.
      \\ \left.
        + 48 \int_{-\infty}^{\infty} \left( \int_x^{x+h} L_t^u \diff{u}
        \right)^2 \diff{x} 
      \right)
      \xrightarrow{d} c_4 \sqrt{\int_{-\infty}^{\infty} (L_t^x)^4 \diff{x}} \,
      Z.
    \end{multline}
    Compared to Rosen's Conjecture~\ref{conj:4}, we see that our compensator
    differs 
    from the conjectured 
    \begin{equation*}
      - 24h \int_{-\infty}^{\infty} (\Delta_x^h L_t^x)^2 L_t^x \diff{x}
      + 48h^2 \int_{-\infty}^{\infty} (L_t^x)^2 \diff{x}
      - \int_{-\infty}^{\infty} (\Delta_x^h L_t^x) L_t^x \diff{x}.
    \end{equation*}
    In view of~\eqref{eq:15}, we would recover this conjectured compensator
    from~\eqref{eq:18} if
    we could replace the term $\frac{1}{h} \int_x^{x+h} L_t^u \diff{u}$ by its
    limit $L_t^x$. However, as by the Mean Value Theorem
    \begin{equation*}
      \abs{
      \frac{1}{h} \int_x^{x+h} L_t^u \diff{u} - L_t^x}
      \leq h^{\varepsilon}
    \end{equation*}
    for any $\varepsilon \in (0,1/2)$, but be would need an order greater than
    $h^{1/2}$ to do the replacement, proving that our
    compensator is equal to the conjectured one (up to negligible terms) does
    not seem to be straightforward.
  \item It is natural to ask whether, as in the cases $q=2$ and $q=3$, a
    central limit theorem continues to hold for $q \geq 4$. It turns out that
    this is equivalent to asking whether $R_{q,h}$ can be replaced by its
    expectation in the statement of Theorem~\ref{thm:2}. Indeed, by inspecting
    the proof of Theorem~\ref{thm:2}, we see that the expectations of $R_{q,h}$ and
    $\int_{-\infty}^{\infty} \left( L_t^{x+h} - L_t^x \right)^q \diff{x}$ have
    the same order of convergence. Furthermore, if $q$ is odd, both of these
    expectations are zero by symmetry. Therefore, to obtain  a central limit
    theorem for $q \geq 4$, one has to show for odd $q \geq 5$ that
    $R_{q,h}/h^{(q+1)/2} \xrightarrow{d} 0$ and for even $q \geq 4$ that
\begin{equation*}
  \frac{1}{h^{(q+1)/2}} \left( R_{q,h} - \Ex{R_{q,h}} \right) \xrightarrow{d}
  0. 
\end{equation*}
Unfortunately, we have to leave this question open for further research.
  \end{enumerate}
\end{remark}

 Our space approach allows to generalize Theorem~\ref{thm:2} in several
directions. For 
example, in view of Lemmas~\ref{lem:3} and~\ref{lem:4}, a careful examination of
the proof of Theorem~\ref{thm:2} immediately yields the following result.

\begin{theorem}
  Let $L_t^x$ be Brownian local time and, for some $\alpha>0$, let $(Y_x)_{x \in
    \R}$ be a non-negative, uniformly bounded and almost surely
  $\alpha$-H\"older continous process which is adapted to   $(L_t^x)_{x \in
    \R}$.  
  Then, for integers $q \geq 2$, it holds that
  \begin{equation*}
    \frac{1}{h^{\frac{q+1}{2}}}
    \left(
      \int_{-\infty}^{\infty} \left( L_t^{x+h} - L_t^{x} \right)^q Y_x
      \diff{x}
      + \widetilde{R}_{t,h}
      \right)
    \xrightarrow{d}
    c_q
    \sqrt{
       \int_{-\infty}^{\infty} (L_t^x)^{q} Y_x^2 \diff{x}
    } \, Z,
  \end{equation*}
  where $Z$ is a standard Gaussian random variable, independent of
  $\left( (L_t^x)^qY_x^2\right)_{x \in \R}$,
  \begin{equation*}
    \widetilde{R}_{t,h} =
      \sum_{k=1}^{\lfloor \frac{q}{2} \rfloor} a_{q,k} 
      \int_{-\infty}^{\infty} \left( L_t^{x+h} - L_t^{x} \right)^{q-2k} \left(
        4 \int_x^{x+h} L_t^u \diff{u} \right)^k Y_x \diff{x}
  \end{equation*}
  and the constants $a_{q,k}$ and  $c_q$ are given by~\eqref{eq:8}.
\end{theorem}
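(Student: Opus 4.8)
The plan is to mirror the proof of Theorem~\ref{thm:2} essentially verbatim, carrying the weight $Y_x$ through every estimate, after first establishing a weighted analogue of Theorem~\ref{thm:1}. What makes this work is that the three hypotheses on $Y$ are exactly tailored to the three ways in which properties of the kernel were used in Sections~\ref{s:integrals} and~\ref{s:main}: uniform boundedness ($\abs{Y_x}\le\norm{Y}_\infty<\infty$ a.s.) keeps every $L^p$-bound in Lemmas~\ref{lem:jq},~\ref{lem:3},~\ref{lem:4} and in the proof of Theorem~\ref{thm:2} valid, each such bound merely picking up a harmless factor $\norm{Y}_\infty$; adaptedness to $(L_t^x)_{x\in\R}$ together with continuity makes $Y$ progressively measurable, so that every application of the stochastic Fubini theorem still goes through; and almost sure $\alpha$-H\"older continuity of $Y$ is precisely what is needed for the ``nudging back in space'' argument in Step~2 of the proof of Lemma~\ref{lem:4} (the expansion $a^r-b^r=\sum_k a^{r-k}(a-b)b^{k-1}$ is applied there with $a=Y_{u_{q+1}}$ and $b$ a shift of $Y$). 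Non-negativity of $Y$ enters no estimate; it only serves to make $\sqrt{\int (L_t^x)^q Y_x^2 \diff{x}}$ unambiguous.

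First I would prove the weighted version of Theorem~\ref{thm:1}: for integers $q\ge2$,
\begin{equation*}
  \frac{q!}{h^{(q+1)/2}} \int_{-\infty}^\infty I_q(x,x+h)\, Y_x \diff{x}
  \xrightarrow{d}
  c_q \sqrt{\int_{-\infty}^\infty (L_t^x)^q Y_x^2 \diff{x}}\, Z,
\end{equation*}
with $Z\sim\mathcal N(0,1)$ independent of $\bigl((L_t^x)^qY_x^2\bigr)_{x\in\R}$. Repeating the iterated stochastic Fubini computation from the proof of Theorem~\ref{thm:1} (its integrability hypothesis is verified exactly as there, now using $\norm{Y}_\infty<\infty$), the weight $Y_x$ is absorbed into the innermost deterministic integration and one obtains
\begin{equation*}
  \int_{-\infty}^\infty I_q(x,x+h)\, Y_x \diff{x}
  =
  \int_{-\infty}^\infty J_{q-1}(X_{u-h},u-h,u)\diff{M_u},
  \qquad X_{u-h}(v)=\int_{u-h}^v Y_x \diff{x},
\end{equation*}
which is exactly the form treated by Lemmas~\ref{lem:3} and~\ref{lem:4} (with this $Y$ in the role of the H\"older process there). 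Setting $\widetilde M^h_x=\frac{q!}{h^{(q+1)/2}}\int_{-\infty}^x J_{q-1}(X_{u-h},u-h,u)\diff{M_u}$, an $L^p$-bounded martingale vanishing at $-\infty$ by Lemma~\ref{lem:jq}, Lemma~\ref{lem:3} gives $\sup_{x\le x_0}\abs{\qvar{\widetilde M^h}{M}_x}\to0$ in $L^1$, while Lemma~\ref{lem:4} gives
\begin{align*}
  \qvar{\widetilde M^h}{\widetilde M^h}_x
  &= \frac{(q!)^2}{h^{q+1}}\int_{-\infty}^x J_{q-1}(X_{u-h},u-h,u)^2 \diff{\qvar{M}{M}_u}
  \\ &\to \frac{(q!)^2\,2^{2q+1}}{(q+1)!}\int_{-\infty}^x (L_t^u)^q Y_u^2 \diff{u}
      = c_q^2\int_{-\infty}^x (L_t^u)^q Y_u^2 \diff{u}
\end{align*}
in $L^1$, since $(q!)^2 2^{2q+1}/(q+1)!=c_q^2$. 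The asymptotic Ray-Knight Theorem~\ref{thm:ark}, applied exactly as at the end of the proof of Theorem~\ref{thm:1}, then yields the displayed convergence, with $Z$ independent of $M$ and hence, since $Y$ is adapted to $(L_t^x)$, of $\bigl((L_t^x)^qY_x^2\bigr)_{x\in\R}$.

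With this in hand I would follow the proof of Theorem~\ref{thm:2} line by line, multiplying every integrand by $Y_x$. Writing $L_t^x=M_x+V_x$ and $I_1^h(x)=M_{x+h}-M_x$, the binomial expansion of $(\Delta_x^h L_t^x)^q Y_x$ together with the bounds on $\norm{I_1^h(x)^{q-k}}_p$ and $\norm{(\Delta_x^h V_x/h)^k}_p$ shows (using $\abs{Y_x}\le\norm{Y}_\infty$) that only the terms with $k\le1$ contribute after dividing by $h^{(q+1)/2}$; the Kailath-Segall identity~\eqref{eq:1} then rewrites $q!\int I_q^h(x)Y_x\diff{x}-\int(I_1^h(x))^qY_x\diff{x}$ and $q!\int I_{q-1}^h(x)\Delta_x^hV_x\,Y_x\diff{x}-q\int(I_1^h(x))^{q-1}\Delta_x^hV_x\,Y_x\diff{x}$ in terms of $\Delta_x^hL_t^x$, and the identity $qa_{q-1,k}-(q-2k)a_{q,k}=0$ cancels all cross terms containing $\Delta_x^hV_x$. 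This leaves
\begin{equation*}
  \int_{-\infty}^\infty (\Delta_x^h L_t^x)^q Y_x \diff{x}
  = q!\int_{-\infty}^\infty I_q^h(x)Y_x\diff{x}
  - \widetilde R_{t,h}
  + q!\int_{-\infty}^\infty I_{q-1}^h(x)\Delta_x^hV_x\,Y_x\diff{x}
  + \widetilde R_h,
\end{equation*}
with $\widetilde R_h/h^{(q+1)/2}\to0$ in $L^p$ and with $\widetilde R_{t,h}$ exactly the compensator in the statement. Finally, $\frac{1}{h^{(q+1)/2}}\int I_{q-1}^h(x)\Delta_x^hV_x\,Y_x\diff{x}\to0$ is shown exactly as~\eqref{eq:26}: split $\Delta_x^hV_x=\int_x^{x+h}A_u\diff{u}$, apply stochastic Fubini to $\frac{1}{h^{(q-1)/2}}\int I_{q-1}^h(x)A_xY_x\diff{x}$ and bound via Lemma~\ref{lem:jq}, Lemma~\ref{lem:5} and the Vitali convergence theorem, boundedness of $Y$ again leaving every estimate unchanged. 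Combining with the weighted Theorem~\ref{thm:1} above then finishes the proof.

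The only point requiring genuine care — and where I expect to spend the most effort — is the bookkeeping in the weighted Theorem~\ref{thm:1}: checking that after the iterated stochastic Fubini the weight really does end up cleanly as the kernel $X_{u-h}(v)=\int_{u-h}^vY_x\diff{x}$, with no spurious minima appearing, so that Lemmas~\ref{lem:3} and~\ref{lem:4} apply directly. Everything else amounts to re-reading the existing proofs with the bounded adapted H\"older factor $Y_x$ present and observing that no estimate is affected; there is no essentially new difficulty.
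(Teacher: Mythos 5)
Your proposal is correct and follows essentially the same route as the paper, which itself only remarks that the result follows ``in view of Lemmas~\ref{lem:3} and~\ref{lem:4}'' by a careful re-examination of the proof of Theorem~\ref{thm:2}; you have simply made explicit that those two lemmas are already stated for a general bounded, adapted, H\"older continuous $Y$, that the weighted analogue of Theorem~\ref{thm:1} reduces via stochastic Fubini to the kernel $X_{u-h}(v)=\int_{u-h}^v Y_x\diff{x}$ handled there, and that the constant check $(q!)^2 2^{2q+1}/(q+1)!=c_q^2$ goes through. No substantive deviation from the paper's intended argument.
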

As an explicit example, one can take $Y_x = (L_t^x)^r$ for any $r \geq 1$. 

Generalizing in another direction, we can replace the time variable $t$, which
is never touched in our proofs, with suitable stopping times $\tau$. A necessary
condition for such a stopping time 
$\tau$ is that $(L_{\tau}^x)_{x \in \R}$ admits a \emph{regular semimartingale
  decomposition} (see~\cite[Section 3]{peccati_hardys_2004}) on some interval $I$, by
which we mean the existence of a probability measure $Q$, a filtration 
$\Set{\mathcal{G}_x(\tau)}_{x \in I}$ and a $(\mathcal{G}_x(\tau),Q)$-Brownian
motion $(\beta_x)_{x \in I}$ such that $L_{\tau}^x$ is a
$(\mathcal{G}_x(\tau),Q)$-semimartingale with canonical decomposition
  \begin{equation*}
    L_{\tau}^x =
    \begin{cases}
      L_{\tau}^0 + 2 \int_0^{x} \sqrt{L_{\tau}^u} \diff{\beta_u} + \int_0^x A_{\tau,u}
      \diff{u} & \text{if $x \in I \cap \R_{+}$} \\
      L_{\tau}^0 - 2 \int_x^{0} \sqrt{L_{\tau}^u} \diff{\beta_u} - \int_x^0 A_{\tau,u}
      \diff{u} & \text{if $x \in I \cap \R_{-}$}.
    \end{cases}
  \end{equation*}

Again, a careful reevaluation of the proof of Theorem~\ref{thm:3} yields the
following set of sufficient conditions.

\begin{theorem}
  \label{thm:3}
 Let $L_t^x$ be the local time of Brownian motion and $\tau$ be a stopping time
 such that $(L_{\tau}^x)_{x \in \R}$ admits a regular semimartingale
 decomposition on some interval~$I$ with
 a finite variation kernel $A_{\tau,u}$  which satisfies 
  \begin{equation*}
  \int_{I} \abs{A_{t,x}}
    \diff{x} < \infty \qquad \text{and} \qquad
  \int_{I} \norm{A_{t,x}}_p \diff{x} < \infty
\end{equation*}
for $p \geq 1$. Furthermore, assume that $\norm{L_{\tau}^{\ast}}_p < \infty$
for $p \geq 1$. Then, for any positive integer $p \geq 2$, it holds that
  \begin{equation*}
    \frac{1}{h^{\frac{q+1}{2}}}
    \left(
      \int_{I} \left( L_{\tau}^{x+h} - L_{\tau}^x \right)^q 
      \diff{x} 
      +
      \widetilde{R}_{q,h} \right)
    \xrightarrow{d}
    c_q
    \sqrt{
       \int_{I} (L_{\tau}^x)^{q} \diff{x}
    } \, Z,
  \end{equation*}
  where $Z$ is a standard Gaussian random variable, independent of
  $(L_{\tau}^x)_{x \in \R}$, the random variable $\widetilde{R}_{q,h}$ is given
  by 
  \begin{equation*}
    \widetilde{R}_{q,h} = 
          \sum_{k=1}^{\lfloor \frac{q}{2} \rfloor} a_{q,k} 
      \int_{I} \left( L_{\tau}^{x+h} - L_{\tau}^x \right)^{q-2k}
      \left( 
        4 \int_x^{x+h} L_{\tau}^u \diff{u} \right)^k \diff{x}
  \end{equation*}
  and the constants $a_{q,k}$ and $c_q$ are defined in~\eqref{eq:8}.
\end{theorem}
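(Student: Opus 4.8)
The plan is to replay the proofs of Theorems~\ref{thm:1} and~\ref{thm:2} essentially verbatim, with the fixed time $t$ replaced by the stopping time $\tau$ and the whole line replaced by the interval $I$; the hypotheses have been tailored precisely so that each auxiliary result invoked there remains available. Concretely, the assumed regular semimartingale decomposition of $(L_{\tau}^x)_{x \in I}$ takes over the role of Corollary~\ref{cor:2}, while the hypotheses $\norm{L_{\tau}^{\ast}}_p < \infty$, $\int_I \abs{A_{\tau,x}} \diff{x} < \infty$ and $\int_I \norm{A_{\tau,x}}_p \diff{x} < \infty$ for all $p \geq 1$ replace the conclusions of Theorem~\ref{lem:8}. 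Extending the local martingale part $M_x$ of $L_{\tau}^x$ to all of $[-\infty,\infty)$ by constants outside $I$ (and setting $L_{\tau}^x = 0$ there), one is back in the framework of Section~\ref{s:prelim}, now with $\qvar{M}{M}_x = 4 \int_{-\infty}^x L_{\tau}^u \diff{u}$.

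The purely martingale-theoretic ingredients require no modification, being statements about an arbitrary element of $H^2_{0,\text{loc}}$ that do not depend on the origin of $M$: this applies to the Kailath-Segall identity (Proposition~\ref{prop:kailath-segall}), the Burkholder-Davis-Gundy type inequality for iterated integrals (Lemma~\ref{lem:2}) and, crucially, the asymptotic Ray-Knight Theorem~\ref{thm:ark}, which furnishes the Dambis-Dubins-Schwarz Brownian motions of $M$ and of the rescaled martingales $\widetilde{M}^h$ exactly as in the proof of Theorem~\ref{thm:1}. Inspecting the proofs of Lemmas~\ref{lem:jq}, \ref{lem:3} and~\ref{lem:4}, one sees that they use only: the identity $\qvar{M}{M}_u = 4 \int L_{\tau}^u \diff{u}$; the finiteness of all moments of $L_{\tau}^{\ast}$; the $L^p$-integrability in $x$ of $L_{\tau}^x$ and of $A_{\tau,x}$; the spatial H\"older continuity of $L_{\tau}^x$; and a bound of the form $\limsup_{h \to 0} \E[\tfrac{1}{h} \int_I (L_{\tau}^u - L_{\tau}^{u-h})^2 \diff{u}] < \infty$. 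Granting these, Theorems~\ref{thm:1} and~\ref{thm:2} carry over line by line, with $4 \int_x^{x+h} L_{\tau}^u \diff{u}$ in the place of $4 \int_x^{x+h} L_t^u \diff{u}$, and deliver the stated limit together with the compensator $\widetilde{R}_{q,h}$.

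Two of the items just listed are not literal consequences of the hypotheses and call for a short separate argument. The analogue of Lemma~\ref{lem:5}, namely $\int_I \norm{L_{\tau}^x}_p^a \diff{x} < \infty$, is immediate when $I$ is bounded (it is then at most $\abs{I} \, \norm{L_{\tau}^{\ast}}_p^a$), and in general follows by substituting the semimartingale decomposition into $\norm{L_{\tau}^x}_p$ and combining Burkholder-Davis-Gundy with $\int_I \norm{A_{\tau,x}}_p \diff{x} < \infty$ and $\norm{L_{\tau}^{\ast}}_p < \infty$. The increment estimate, quoted for fixed $t$ from \cite[Thm.~1.1]{marcus_$l^p$_2008} in the proofs of Lemmas~\ref{lem:3} and~\ref{lem:4}, can here be obtained directly: writing $L_{\tau}^u - L_{\tau}^{u-h} = 2 \int_{u-h}^u \sqrt{L_{\tau}^v} \, \diff{\beta_v} + \int_{u-h}^u A_{\tau,v} \, \diff{v}$, squaring, taking expectations and applying Fubini, the martingale part contributes $\tfrac{1}{h} \int_I 4 \, \E[\int_{u-h}^u L_{\tau}^v \diff{v}] \diff{u} = 4 \int_I \E[L_{\tau}^v] \diff{v} < \infty$, while the finite-variation part stays bounded, dominated by $\norm{\int_I \abs{A_{\tau,v}} \diff{v}}_2^2$, which is finite by Minkowski's integral inequality together with $\int_I \norm{A_{\tau,v}}_2 \diff{v} < \infty$.

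The one place where the substitution is not entirely automatic, and hence the main point to verify, is the spatial H\"older continuity of $x \mapsto L_{\tau}^x$, which is classical for fixed $t$ and is used implicitly throughout (through the bound $\abs{\Delta_x^h L_{\tau}^x} \leq h^{\varepsilon}$, and, for the $Y$-weighted variant, in the hypothesis of Lemma~\ref{lem:4}). This too is read off from the regular semimartingale decomposition: the martingale part $2 \int_0^x \sqrt{L_{\tau}^u} \, \diff{\beta_u}$ has quadratic variation $4 \int_0^x L_{\tau}^u \diff{u}$, whose density is bounded by $4 L_{\tau}^{\ast} < \infty$ almost surely, so that, by Burkholder-Davis-Gundy and Kolmogorov's continuity criterion (its increments having finite $L^p$-moments for every $p$ since $\norm{L_{\tau}^{\ast}}_p < \infty$), it is almost surely $\beta$-H\"older for every $\beta < 1/2$ on compact subintervals of $I$, while the finite-variation part $\int_0^x A_{\tau,u} \diff{u}$ is absolutely continuous and enters the estimates only through lower-order, integrated quantities. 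With this in hand, every estimate of Sections~\ref{s:integrals} and~\ref{s:main} reproduces verbatim, and the proof is complete.
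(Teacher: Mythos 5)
Your strategy is exactly the paper's: the paper offers no separate proof of this theorem beyond the one-line assertion that ``a careful reevaluation of the proof'' of Theorem~\ref{thm:2} yields it, so replaying the proofs of Theorems~\ref{thm:1} and~\ref{thm:2} with $t$ replaced by $\tau$ and $\R$ by $I$ is the intended argument, and you have been considerably more explicit than the paper in isolating which properties of $L_t^x$ are actually consumed. Your derivations of the increment estimate (replacing the citation of Marcus--Rosen) and of the spatial H\"older continuity from the regular semimartingale decomposition are sound.

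One of your three patches, however, does not hold up as written when $I$ is unbounded: the analogue of Lemma~\ref{lem:5}, i.e.\ $\int_I \norm{L_\tau^x}_p^a \diff{x} < \infty$ (and its special case $\int_I \norm{L_\tau^u}_2 \diff{u} < \infty$, which appears in essentially every bound of Lemmas~\ref{lem:3} and~\ref{lem:4} and in the Vitali argument at the end of the proof of Theorem~\ref{thm:2}). Substituting the decomposition and applying Burkholder--Davis--Gundy together with $\int_I \norm{A_{\tau,x}}_p \diff{x} < \infty$ and $\norm{L_\tau^\ast}_p < \infty$ yields at best a bound on $\norm{L_\tau^x}_p$ that is uniform in $x$, not one that decays, and a bounded function need not be integrable over an unbounded interval. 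This is no accident: in the fixed-$t$ case the paper does not obtain Lemma~\ref{lem:5} from the semimartingale decomposition either, but from Tak\'acs's explicit formula for $\E\left[(L_1^x)^p\right]$, which supplies Gaussian decay in $x$. So for unbounded $I$ you must either add $\int_I \norm{L_\tau^x}_p^a \diff{x} < \infty$ as a hypothesis (it holds trivially in the paper's Ray--Knight example, where $L_{\tau_0}^x$ vanishes for $x$ large, the squared Bessel process of dimension zero being absorbed at the origin) or supply a genuine decay argument; for bounded $I$ your observation $\int_I \norm{L_\tau^x}_p^a \diff{x} \leq \abs{I}\,\norm{L_\tau^\ast}_p^a$ already suffices. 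Everything else in the proposal is correct.
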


An example of a stopping time verifying the conditions of Theorem~\ref{thm:3}
(with $I = \R_{+}$) comes from the Ray-Knight theorem (see~\cite[Ch. XI]{revuz_continuous_1999}):
If we take \begin{equation*} 
  \tau_0 = \inf \Set{t \geq 0 \colon L_t^0 > 0},
\end{equation*}
then  $L_{\tau_0}^x$ has a regular
semimartingale decomposition on $\R_+$ with 
finite variation kernel $A_{\tau_0,u}=0$. Furthermore, $(L_{\tau_0}^x)_{x \geq
  0}$ is equal in law to to a squared Bessel process started in zero with
dimension zero, and thus, for example by~\cite{yan_iterated_2005}, we have that 
$\norm{L_{\tau_0}^{\ast}}_p < \infty$.

\section{Acknowledgements}

I heartily want to thank Giovanni Peccati, to whom the idea to look
for alternative proofs of~\eqref{eq:9} and~\eqref{eq:32} through the
semimartingale decomposition of Brownian local time in space is due and who
provided valuable insight. I'm also indebted to Guillaume Poly for numerous
fruitful discussions which improved this paper in several ways. Finally, I would
like to thank Maria Eulalia Vares and an anonymous referee for comments which
improved the presentation of this work.

\bibliography{refs}
\end{document}